\renewcommand{\geq}{\geqslant}
\renewcommand{\leq}{\leqslant}
\newcommand{\RM}{\mathbb{R}}
\newcommand{\ZM}{\mathbb{Z}}
\newcommand{\NM}{\mathbb{N}}
\newcommand{\bigO}{\mathcal{O}}
\DeclareMathOperator{\supp}{supp}
\newtheorem{theorem}{Theorem}[section]
\newtheorem{lemma}[theorem]{Lemma}
\newtheorem{proposition}[theorem]{Proposition}
\newtheorem{remark}[theorem]{Remark}
\newtheorem{definition}[theorem]{Definition}
\newtheorem*{main-theorem}{Main Theorem}
\newtheorem*{remark*}{Remark}
\numberwithin{equation}{section}
\title[Existence of a Highest Wave]{Existence of a highest wave in a fully dispersive two-way shallow water model}
\author[Ehrnstr\"om]{Mats Ehrnstr\"om}
\address{Department of Mathematical Sciences, NTNU Norwegian University of Science and Technology, 7491 Trondheim, Norway}
\email{mats.ehrnstrom@math.ntnu.no}
\author[Johnson]{Mathew~A.~Johnson}
\address{Department of Mathematics, University of Kansas, Lawrence, KS 66045 USA} 
\email{matjohn@ku.edu}
\author[Claassen]{Kyle M. Claassen}
\address{Department of Mathematics, University of Kansas, Lawrence, KS 66045 USA} 
\email{kclaassen@ku.edu}  
\thanks{ME was supported by grant nos. 231668 and 250070 from the Research Council of Norway; 
MJ was supported by the National Science Foundation under grants DMS-1614785 and DMS-1211183; 
KC was supported by the National Science Foundation under grant DMS-1211183.
}
\date{\today}
\keywords{Highest wave; singular solutions; Whitham equation; water waves}
\subjclass[2010]{35Q35 (primary), 35B65, 37K50, 76N10}
\begin{document}

\begin{abstract}
We consider the existence of periodic traveling waves in a bidirectional Whitham
equation, combining the full two-way dispersion relation from the incompressible Euler equations
with a canonical shallow water nonlinearity.  Of particular interest is the existence 
of a highest, cusped, traveling wave solution, which we obtain as a limiting case at the end of 
the main bifurcation branch of $2\pi$-periodic traveling wave solutions continuing from the zero state.  
Unlike the unidirectional
Whitham equation, containing only one branch of the full Euler dispersion relation, where
such a highest wave behaves like $|x|^{1/2}$ near its crest, the cusped
waves obtained here behave like $|x\log|x||$. Although the linear operator involved in this equation can be easily represented in terms of an integral operator, it maps continuous functions out of the H\"older and Lipschitz scales of function spaces by introducing logarithmic singularities. Since the nonlinearity is also of higher order than that of the unidirectional Whitham equation, several parts of our proofs and results deviate from those of the corresponding unidirectional equation, with the analysis of the logarithmic singularity being the most subtle component. This paper is part of a longer research programme for understanding the interplay between nonlinearities and dispersion in the formation of large-amplitude waves and their singularities.
\end{abstract}

\maketitle

\section{Introduction}

Given the great complexity of the Euler equations, which fundamentally describe the flow of an incompressible, inviscid fluid over an impenetrable bottom,
it has long been considered advantageous to find simpler models that approximate the dynamics of the free surface in particular asymptotic regimes.  
Arguably, the most famous such approximation is the
well-studied KdV equation which, in dimensional form, can be written as
\begin{equation}\label{e:kdv}
u_t+\sqrt{gh_0}\left(1+\frac{1}{6}h_0^2\partial_x^2\right)u_x+\frac{3}{2}\sqrt{\frac{g}{h_0}}uu_x=0,
\end{equation}
where here $x$ denotes the spatial variable, $t$ denotes the temporal variable, and $u=u(x,t)$ is a real-valued function describing the fluid surface; further,
$g$ denotes the constant due to gravitational acceleration and $h_0$ denotes the undisturbed fluid depth.
The KdV equation is well known to describe unidirectional propagation of small amplitude, long wave phenomena in a channel of water, most notably periodic and solitary waves,
but is also known to lose relevance for short and intermediate wavelength regimes where wave features
such as wave breaking and surface singularities may be observed in other equations.

Heuristically, this may be explained as follows: if one linearizes the KdV equation
about the flat state $u {=} 0$, a straightforward calculation implies that this linearized equation admits plane wave solutions of the form $e^{ik(x-ct)}$ provided that
\[
c_{\rm KdV}(k)=\sqrt{g h_0}\left(1-\frac{1}{6}(kh_0)^2\right),
\]
while performing the analogous calculations on the Euler equations one finds the linearized system admits such plane wave solutions provided that \(c_{\rm Euler}^2(k)=\frac{g\tanh(kh_0)}{k}\), that is
\begin{equation}\label{eq:Euler_dispersion}
c_{\rm Euler,\pm}(k)= \pm\sqrt{\frac{g\tanh(kh_0)}{k}}.
\end{equation}
Note that while the KdV has only one phase speed, the Euler equations has \emph{two} branches of the phase speed $c$.  This is a reflection of the
fact that the Euler equation generically supports \emph{bidirectional} propagation of waves, 
while the KdV equations is derived under the assumption of \emph{unidirectional wave propagation}.
Concentrating on the positive branch of $c_{\rm Euler}(k)$, the connection between these two phase speeds 
is given through the expansion
\[
c_{\rm Euler,+}(k)=c_{\rm KdV}(k)+\mathcal{O}(|kh_0|^4),
\]
so that the KdV equation can be seen to approximate to second order
the positive branch of the Euler phase speed in the long-wave regime $|kh_0|\ll 1$.  In fact, solutions
of \eqref{e:kdv} are known to exist and converge to those of the water wave problem at the order
of $\mathcal{O}(h_0^2k^2)$ during an appropriate time interval; see \cite[Section 7.4.5]{LannesBook} for details.
Outside of this regime, however, it is clear that the KdV phase speed is a poor approximation of 
that for the Euler equations and hence 
the KdV equation should not be expected to describe short, or even intermediate, wavelength phenomena.

To better describe short wave phenomena, Whitham suggested to replace the linear phase speed in the KdV equation with the \emph{exact, unidirectional} phase speed from
the Euler equations.  This leads to the nonlocal evolution equation
\begin{equation}\label{e:w}
u_t+\mathcal{M}u_x+\frac{3}{2}\sqrt{\frac{g}{h_0}}uu_x=0
\end{equation}
where here $\mathcal{M}$ is a Fourier multiplier defined by its symbol via
\[
\widehat{Mf}(k)=\sqrt{\frac{g\tanh(h_0k)}{k}}~\hat{f}(k).
\]
Denoting $D=\frac{1}{i}\partial_x$, we can thus formally write $\mathcal{M}=\sqrt{\frac{g\tanh(h_0D)}{D}}$.  
As \eqref{e:w} combines the full unidirectional phase speed from the Euler equations with the canonical shallow water nonlinearity, Whitham advocated that \eqref{e:w}, now referred to as
the \emph{Whitham equation}, should admit periodic and solitary waves while at the same time
allowing for wave breaking and surface singularities.  Much recent activity has verified these 
claims for the Whitham equation: equation \eqref{e:w} admits both solitary \cite{EGW12} and periodic
\cite{EK09,EK11} waves, but also features wave breaking \cite{CE98,MR3682673} and a highest cusped
traveling wave solution \cite{EWhighest}.  Notably, the cusped solutions in \cite{EWhighest} were shown to exist
through a global bifurcation argument, continuing off a local branch of small amplitude periodic
traveling waves bifurcating from the zero state, and were shown to be smooth away from their highest point (the crest) and
behave like $|x|^{1/2}$ near the crest.  It should also be noted that the Whitham equation
\eqref{e:w} features the same kind of Benjamin--Feir instability \cite{HJ15,SKCK14} as the
Euler equations; see also \cite{HJ15_2} where additional effects of constant vorticity and surface tension
were considered. We refer to \cite{MR3603270} for a study on the symmetry and decay properties of solitary wave solutions
of \eqref{e:w}, as well as \cite{Arn16,EEP15} where the associated Cauchy problem is studied.

Despite the success of the Whitham equation \eqref{e:w}, there are still water wave phenomena
that it does not capture.  For example, it is known that the Euler equations admits high-frequency
(non-modulational) instabilities of small amplitude periodic traveling waves: see \cite{RS86,MR3640555}
and references therein.  In \cite{MR3640555}, however, it was shown that the unidirectional nature of 
the Whitham equation prohibits such instabilities from manifesting.  Indeed, there it was 
argued that the bidirectionality of the Euler equations was the key underlying feature allowing
for the possibility of such instabilities.  Furthermore, the irrotational Euler equations are known to
admit peaked waves, i.e., traveling wave solutions with bounded but discontinuous derivatives at their highest point, 
with a corner at each crest with an interior angle of 120$^o$~\cite{AFT82}.  The Whitham equation \eqref{e:w} instead features cusped waves, having exactly half\footnote{Indeed, the wave constructed in \cite{EWhighest} was shown
to have optimal global regularity $C^{1/2}(\RM)$.} the regularity of the highest waves of the Euler equations \cite{EWhighest}. In light of \eqref{eq:Euler_dispersion} it is tempting to expect that this is not due to some bad modeling aspect of the Whitham equation, but to its unidirectionality. \emph{The goal of this paper is to analyze the steady periodic waves of the corresponding bidirectional Whitham equation, and to see how this influences the existence and features of a possible highest wave for such an equation.} In particular, is it peaked as for the Euler equations? Answering  that question is part of a longer research programme for understanding the interplay between nonlinearities and (nonlocal) dispersion in the formation of large-amplitude waves and their singularities.

In this paper, we consider the following full-dispersion shallow water wave model, given here in 
dimensional form:
\begin{equation}\label{dim_biw}
\left\{\begin{aligned}
\eta_t&=-\frac{1}{\sqrt{gh_0}}\mathcal{K}u_x-\sqrt{\frac{g}{h_0}}\left(\eta u\right)_x\\
u_t&=-\sqrt{\frac{g}{h_0}}\left(\eta_x+uu_x\right),
\end{aligned}\right.
\end{equation}
where the operator $\mathcal{K}$ is a Fourier multiplier defined by its symbol via
\[
\widehat{\mathcal{K}f}(k)=\frac{g\tanh(kh_0)}{k}~\hat{f}(k),
\]
that is, \(\mathcal{K}:=\frac{g\tanh(h_0D)}{D}\), where as above $D=\frac{1}{i}\partial_x$.
Here, $\eta$ represents the free surface, and $u=\varphi_x$ where $\varphi(x,t)=\varphi(x,\eta(x,t),t)$ is the trace of the velocity potential at the surface interface. The dispersion relation for \eqref{dim_biw} agrees exactly 
with that of the full Euler equation, so that this is a bidirectional
equation with two branches of the linear phase speed given in \eqref{eq:Euler_dispersion}. The model \eqref{dim_biw} also appeared in \cite[Section 6.3]{MR3668593}, where it was
described as a linearly well-posed regularization of the linearly ill-posed, yet completely integrable, Kaup system
\[
\left\{\begin{aligned}
&\eta_t=-\sqrt{gh_0}\left(1+\frac{1}{3}h_0^2\partial_x^2\right)u_x	-\sqrt{\frac{g}{h_0}}(\eta u)_x=0\\
&u_t=-\sqrt{\frac{g}{h_0}}\left(\eta_x+uu_x\right)=0.
\end{aligned}\right.
\]
Indeed, one can see that the phase speed for the Kaup system agrees to $\mathcal{O}(|kh_0|^2)$ with
that of \eqref{dim_biw}.
Other, and more involved,  full-dispersion equations, are given in \cite{LannesBook} and  \cite{MR3026551}.  The system \eqref{dim_biw} can be derived as an ad-hoc bidirectionalization of the Whitham equation or, as in \cite{MR2991247} and \cite{MKD2015}, via a formal expansion of the Dirichlet--Neumann operator appearing in the free-surface water-wave equations. Although the analytical existence theory for this equation is conditional \cite{EPW17,klein2017whitham}, it displays nice \emph{qualitative} properties for solutions with strictly positive surface elevation: it significantly outperforms the KdV equation in experimental settings \cite{Carter2017,MR3523508}, and the steady periodic waves of supercritical wave speed are stable in the appropriate regimes \cite{CJ17}. A proof of conditionally stable solitary waves in the spirit of \cite{EGW12} is in preparation, too \cite{NW18}.

As we will see below, small amplitude periodic traveling wave solutions of \eqref{dim_biw}  can be 
shown, at particular wave speeds, 
to bifurcate from the trivial solution $(\eta,u)=(0,0)$ through the use of 
elementary bifurcation theory and a Lyapunov--Schmidt reduction.  By numerically continuing this local
branch of solutions, we observe that the waves approach a highest wave, which at lower resolutions does indeed seem to be peaked. From a more detailed analysis, however, it appears that unlike the full Euler equations the highest wave of \eqref{dim_biw} is still cusped at its highest point. To understand this more rigorously, we justify the numerical observations through
the use of a global bifurcation argument in the spirit of \cite{BTbook,EK11}.  By combining this
global argument with {a priori} estimates on a wave of extreme height we establish a highest, cusped, almost everywhere smooth, traveling wave solution of \eqref{dim_biw}, which behaves 
as $|x\log|x||$ near the crest. The introduction of bidirectionality therefore has a  twofold effect on the highest wave: it increases (doubles) its regularity, but it also introduces a logarithmic factor such that the derivative is not any more bounded but blows up logarithmically. The latter can be explained by the functional analysis of Fourier multiplier operators of integer order, see Subsection~\ref{subsec:functional}.

We remark that the present paper is an extension of the recent work
\cite{EWhighest}, where the authors performed an analogous study on the unidirectional
Whitham equation. The integral kernel associated
with the Fourier multiplier $\mathcal{K}$, however, introduces novel difficulties in the analysis coming from its logarithmic blow-up at low frequencies;  see Lemma \ref{p:kernel}(iii) below.  As a consequence,
it is not possible to capture the global regularity of the highest wave in terms of classical
H\"older, or even H\"older--Zygmund, spaces.  We believe our analysis sheds some light
on a more general existence theory of extreme waves associated to dispersive nonlocal equations, and this 
is planned to be reported on in the future.

The outline for our investigation is as follows.  In Section \ref{s:prelim} we lay out  the analytic
preliminaries.  Most importantly, we perform a detailed study
of the integral kernel $K$ associated to the Fourier multiplier $\mathcal{K}$ above, together
with its $2\pi$-periodic periodization.  Due to the fact that the integral kernel associated
to $\mathcal{K}$ is known in closed form
we are able to easily describe the singular nature of the kernel $K$ near zero-frequency, together
with its monotonicity properties.  Mark that the corresponding analysis in \cite{EWhighest} 
for the unidirectional Whitham equation was significantly more complicated, due to the fact 
that integral kernel associated to $\mathcal{M}$ is not known in such a clean form\footnote{Indeed,
in \cite{EWhighest}, using the theory of completely monotone functions, 
the authors provide the first closed form expression for the integral kernel associated to $\mathcal{M}$,
although their expression is not as explicit as that for the integral kernel associated to \(\mathcal{K}\).}.  In Section \ref{s:num} we report on a numerical
investigation of the global bifurcation diagram, continuing from the zero state, for the profile equation associated to \eqref{dim_biw}.
The numerics are then used to motivate the analytical theory in the remainder of the paper.  
In Section \ref{sec:a priori} we prove some a priori estimates and lemmas concerning 
periodic traveling wave solutions of \eqref{dim_biw} of maximum height.  Finally, the local and
global bifurcation analysis for our solutions is performed in Section \ref{sec:bifurcation}, where it is shown
that there is a sequence of waves converging to a logarithmically cusped wave of greatest height, thus establishing
our main result Theorem \ref{thm:main}.  


\section{Preliminaries}\label{s:prelim}
We consider $2\pi$-periodic solutions of the full-dispersion, bidirectional shallow water system \eqref{dim_biw}.
In non-dimensional form, they read as
\begin{equation}\label{biw}
\left\{\begin{aligned}
\eta_t&=-\mathcal{K}u_x-(\eta u)_x\\
u_t&=-\eta_x-uu_x
\end{aligned}\right.
\end{equation}
and, with slight abuse of notation, the operator $\mathcal{K}$ is now a Fourier multiplier defined by its symbol via
\begin{equation}\label{symbol}
\widehat{\mathcal{K}f}(k)=\frac{\tanh(k)}{k}~\hat{f}(k), \quad\text{i.e.}\quad \mathcal{K}:=\frac{\tanh(D)}{D}.
\end{equation}
Precisely, \eqref{biw} is obtained from \eqref{dim_biw} via the rescaling
\[
t\mapsto\sqrt{\frac{h_0}{g}}~t,\quad x\mapsto h_0~x.
\]
Our primary concern is the existence of traveling wave solutions of \eqref{biw}, which are 
solutions of the form $(\psi,\varphi)(x,t)=(\eta,u)(x-ct)$ where, again abusing notation, the profiles $\varphi$ and $\psi$ satisfy
the nonlocal system
\begin{align*}
&\mathcal{K}\varphi_x+\left(\psi ({\varphi}-c)\right)_x=0\\
&\psi_x=c\varphi_x-\varphi \varphi_x
\end{align*}
Integrating both equations, one sees that localized solutions must satisfy the scalar profile equation
\begin{equation}\label{eq:profile'}
\mathcal{K} \varphi=\varphi\left(c-\frac{1}{2} \varphi\right)(c-\varphi).
\end{equation} 
More generally, the Galilean transformation \((\varphi,c) \mapsto (\varphi,c)  + (\lambda,\lambda)\) can be used to eliminate one of the constants of integration, although not both. We remark that a theory for a class of general nonlinearities is planned in a future investigation.

\begin{figure}[t]
\begin{center}
\includegraphics[scale=0.45]{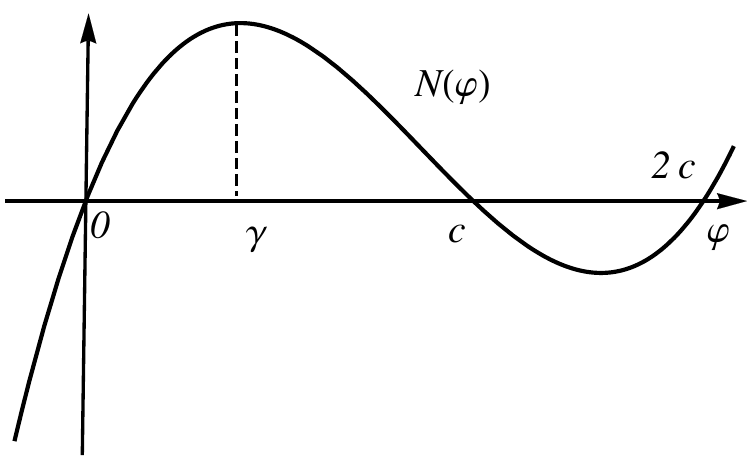}
\caption{\small A plot of the cubic function $N(\varphi)$ for a given $c>0$.  Of key importance is the locally quadratic
nature of $N$ near its first positive critical point $\gamma$.
}
\label{F:Npic}
\end{center}
\end{figure}

Factoring the third-order polynomial in \eqref{eq:profile'} near its critical point at $\varphi=\gamma$, our equation reads
\begin{equation}\label{profile}
\mathcal{K} \varphi = N(\varphi)
\end{equation}
with
\begin{equation}\label{eq:N_doubleroot}
N(\varphi) = N(\gamma) + \textstyle{\frac{1}{2}}(\varphi - \gamma - \sqrt{3}c) \left(\varphi - \gamma \right)^2, 
\end{equation}
defined by the right-hand side of \eqref{eq:profile'} and 
\[
\gamma := c \left(1 - \textstyle{\frac{1}{\sqrt{3}}} \right)
\] 
being the smallest root of \(N^\prime(\varphi) = 0\); see Figure \ref{F:Npic}. In particular, observe
that  \(N(\varphi)\) has no critical points for \(\varphi < \gamma\), 
and is strictly increasing on the same half-line. It increases through the origin to its local maximum at \(\varphi = \gamma\).  As the forthcoming analysis will show, 
the number $\gamma$ will be the maximum of the highest wave to be, and the 
quadratic nature of $N$ near $\varphi=\gamma$ will cause the resulting singularity at the peak.

We devote the remainder of this section to establishing key properties of the Fourier multiplier $\mathcal{K}$
as well as to set up the functional framework used in our analysis.

\subsection{The integral operator \(\mathcal{K}\)}
To make sense of the operator $\mathcal{K}$, we utilize the Fourier transform.  Throughout,
the operator $\mathcal{F}$ will denote the extension to the space of tempered distributions $\mathcal{S}'(\mathbb{R})$
of the Fourier transform
\[
\mathcal{F}(f)(\xi):=\int_\mathbb{R} f(x)\exp(-i\xi x)\, dx
\]
on the Schwartz space $\mathcal{S}(\mathbb{R})$, with inverse 
$ \mathcal{F}^{-1}(f)(\xi) =\frac{1}{2\pi}\mathcal{F}(f)(-\xi)$.
Observe that with this normalization, $\mathcal{F}$ defines a unitary operator on $L^2(\mathbb{R};\mathbb{C})$.
The operator $\mathcal{K}$ on $\mathcal{S}(\mathbb{R})$ may then be understood via the inverse Fourier transform
representation
\[
\mathcal{K}f(x) = \mathcal{F}^{-1}\left(\frac{\tanh(\xi)}{\xi}\hat{f}(\xi)\right)(x).
\]
By the convolution theorem, one can equivalently introduce the integral kernel
\begin{equation}\label{K}
K(x)=\mathcal{F}^{-1}\left(\frac{\tanh(\xi)}{\xi}\right)(x)
\end{equation}
and define the action of $\mathcal{K}$ on $\mathcal{S}(\mathbb{R})$ by convolution with $K$, 
that is,
\[
\mathcal{K}f(x)=K\ast f(x) = \int_\mathbb{R}K(x-y)f(y) \, dy.
\]
By duality, this action can be extended to any $f \in \mathcal{S}^\prime(\RM)$. 

In the forthcoming analysis, we will utilize several positivity, monotonicity, and asymptotic
properties of the kernel $K$.  To aid in this description, we make the following definition.

\begin{definition}
Let $0\leq a<b\leq\infty$.  A function 
$g:(a,b)\to\mathbb{R}$ is  called {\em completely monotone} if it is of class $C^\infty$ and
\begin{equation}
\label{eq:completely monotone}
(-1)^n g^{(n)}(\lambda)\ge 0
\end{equation}
for all \(n \in \ZM_0\) and all 
$\lambda\in(a,b)$.
\end{definition}

A proof of that a general class of kernels, including \(K\), are completely monotone on $(0,\infty)$ can be found in \cite{EWhighest}, and is due to E. Wahl\'en. In our case, \(K\) is explicitly known, and the complete monotonicity follows directly. 
Before stating this result, we make the following convention. Given any real-valued functions $f$ and $g$, we say that $f\lesssim g$, or $f(x) \lesssim g(x)$, if there exists a constant $C>0$ such that $f(x)\leq C g(x)$ for all $x$ in the domain of interest. If no specific domain is indicated, the statement is understood to be globally valid.  The opposite relation $\gtrsim$ is defined analogously, and we write $f\eqsim g$ when \(f \lesssim g \lesssim f\).
In any  chain of inequalities, we will also feel free to denote by $C$ harmless constants with possibly different values.

\begin{lemma}\label{p:kernel}
The integral kernel  $K$ is given explicitly by
\[
K(x)=2\log\left|\coth(\frac{\pi x}{4})\right|,
\]
and is completely monotone on $(0,\infty)$. In particular, 
\begin{itemize}
\item[(i)] \(K\) is real-valued, even, strictly positive on $\mathbb{R}\setminus\{0\}$, and satisfies
\[
\|K\|_{L^1(\mathbb{R})}=\mathcal{F}\left(K\right)(0)=1.\\[6pt]
\] 
\item[(ii)] \(K \in C^\infty(\RM \setminus \{0\})\), and for any 
$s_0\in(0,\pi/2)$
and \(n \geq 0\), one has
\[
|\partial_{x}^n K(x)| \lesssim  \exp(-s_0 |x|), 
\]
uniformly for all \(x \geq 1\).\\[-6pt]
\item[(iii)]  \(K\) has the canonical representation
\begin{equation}\label{eq:K_twoparts}
K(x) = -2\log\left|\frac{\pi x}{4}\right|+ K_\text{reg}(x), 
\end{equation}
with  \(K_\text{reg} \in C^\infty(\RM)\) being the regular part of \(K\). 
As \(x \to 0\), one has the asymptotic expansion  
$K_{\rm reg}(x)=\frac{1}{24}\pi^2x^2+\mathcal{O}(x^4)$,
which is valid under term-wise differentiation. 
\end{itemize}
\end{lemma}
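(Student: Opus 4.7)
The plan is to first derive the closed-form expression for $K$ via a partial-fraction decomposition of the symbol, and then read off (i)--(iii) together with complete monotonicity directly from the explicit formula.

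For the explicit formula, I would start from the Mittag--Leffler expansion
\[
\frac{\tanh(\xi)}{\xi} = \sum_{n=0}^{\infty} \frac{2}{\xi^{2} + \bigl((2n+1)\pi/2\bigr)^{2}},
\]
and invert term-by-term using the elementary Fourier pair $\mathcal{F}(e^{-a|x|})(\xi) = 2a/(a^{2}+\xi^{2})$. The resulting series
\[
K(x) = C\sum_{n=0}^{\infty}\frac{e^{-(2n+1)\pi|x|/2}}{2n+1}, \qquad x\neq 0,
\]
(with a numerical constant $C$ fixed by the normalization of $\mathcal{F}$) is then summed in closed form via $\sum_{n\geq 0} y^{2n+1}/(2n+1) = \operatorname{arctanh}(y)$ at $y = e^{-\pi|x|/2}$, producing the claimed expression $K(x) = 2\log|\coth(\pi x/4)|$. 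The term-by-term inversion and interchange of sum and integral are justified because, off any neighborhood of $x = 0$, the partial sums converge geometrically, uniformly together with all derivatives.

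Given this formula, properties (i)--(iii) reduce to direct calculation. Evenness and real-valuedness are immediate; strict positivity follows from $\coth(y) > 1$ for $y > 0$; the $L^{1}$ identity is obtained by integrating the exponential series and applying $\sum_{n\geq 0}(2n+1)^{-2} = \pi^{2}/8$, combined with Fourier inversion at $\xi = 0$. For (ii), smoothness on $\RM\setminus\{0\}$ is immediate, and the decay bound follows from $\coth(\pi x/4) - 1 \sim 2 e^{-\pi x/2}$ as $x\to+\infty$, so that $K(x) = 2\log\bigl(1 + \mathcal{O}(e^{-\pi x/2})\bigr) = \mathcal{O}(e^{-\pi x/2})$; derivative bounds come from differentiating the exponential series (which converges uniformly with all derivatives on $[1,\infty)$), or equivalently from analytic extension to a strip. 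For (iii), I would isolate the logarithmic singularity by writing
\[
K_{\rm reg}(x) = K(x) + 2\log\left|\frac{\pi x}{4}\right| = 2\log\left|\frac{(\pi x/4)\cosh(\pi x/4)}{\sinh(\pi x/4)}\right|,
\]
which is real-analytic on all of $\RM$ because the argument extends to a nonvanishing entire function equal to $1$ at $x = 0$. The leading coefficient $\pi^{2}/24$ follows from combining $\log\cosh(u) = u^{2}/2 + \mathcal{O}(u^{4})$ and $\log\bigl(u/\sinh(u)\bigr) = -u^{2}/6 + \mathcal{O}(u^{4})$ with $u = \pi x/4$; termwise differentiability of the expansion is automatic from analyticity.

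Finally, complete monotonicity on $(0,\infty)$ is immediate from the series representation: on $(0,\infty)$, $K$ is a convergent positive linear combination of the completely monotone exponentials $x\mapsto e^{-(2n+1)\pi x/2}$, and complete monotonicity is preserved under positive linear combinations and pointwise limits --- equivalently, the series exhibits $K$ as the Laplace transform of a non-negative discrete measure, so Bernstein's theorem applies. I expect the only real obstacle to be keeping the normalization constants straight through the Fourier inversion in the first step and justifying the termwise manipulations as tempered distributions (since the symbol is not integrable); once the closed form is in hand, every remaining statement is an essentially algebraic consequence.
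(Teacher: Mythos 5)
Your derivation takes a genuinely different and more self-contained route than the paper's. Where the paper cites integral-transform tables for the closed form, refers to an external reference for the Bernstein-type complete monotonicity argument, and uses a contour shift for the decay in (ii), you obtain everything from a single device, the Mittag--Leffler expansion of $\tanh(\xi)/\xi$. Term-by-term Fourier inversion then produces the exponential-series representation of $K$, which simultaneously exhibits $K$ as a discrete Laplace transform of a positive measure (hence completely monotone by Bernstein), sums to a closed $\log\coth$ form, delivers the derivative and decay bounds in (ii) by geometric majorization on $[1,\infty)$ without contour work, and yields (iii) by elementary Taylor expansion of $z\cosh z/\sinh z$. As a package this is cleaner and more directly checkable than the paper's argument, which leans on unproved citations for its two most substantive facts.

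There is, however, one genuine gap, and it is precisely the obstacle you flag at the end: the normalization constant. You leave $C$ in $K(x) = C\sum_{n\geq 0}(2n+1)^{-1}e^{-(2n+1)\pi|x|/2}$ undetermined and then assert the sum ``produces'' $2\log|\coth(\pi x/4)|$; since $\sum_{n\geq 0}y^{2n+1}/(2n+1)=\frac{1}{2}\log\frac{1+y}{1-y}$, that would force $C=4$. But your own plan for (i), integrating the series and using $\sum_{n\geq 0}(2n+1)^{-2}=\pi^2/8$ to enforce $\hat{K}(0)=1$, gives $\int_{\RM}K\,dx=\frac{\pi C}{2}$, hence $C=2/\pi$; and honest term-by-term inversion gives the same, since $\mathcal{F}^{-1}\bigl((a^2+\xi^2)^{-1}\bigr)(x)=\frac{1}{2a}e^{-a|x|}$ applied to $\frac{\tanh\xi}{\xi}=\sum_{n}2\bigl(\xi^2+((2n+1)\pi/2)^2\bigr)^{-1}$ produces $K(x)=\frac{2}{\pi}\sum_{n}(2n+1)^{-1}e^{-(2n+1)\pi|x|/2}$ with no freedom. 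With $C=2/\pi$ the closed form is $K(x)=\frac{1}{\pi}\log|\coth(\pi x/4)|$, differing from the stated formula by a factor of $2\pi$. The discrepancy is real: $\int_{\RM}2\log|\coth(\pi x/4)|\,dx=\frac{16}{\pi}\int_0^\infty\log\coth(u)\,du=\frac{16}{\pi}\cdot\frac{\pi^2}{8}=2\pi$, so the lemma's displayed closed form and its claim $\|K\|_{L^1}=1$ cannot both hold. Carried out to the end, your method lands on the $\frac{1}{\pi}$ prefactor, not the stated $2$ (with the corresponding correction to the constants in (iii)); as written, your proposal never actually pins $C$ down and therefore cannot simultaneously establish (i) and reproduce the displayed formula.
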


\begin{proof}
The explicit formula for the Fourier transform can be found, for instance, in \cite[Section 5.5.4]{Bhatia}\footnote{In
\cite{Bhatia}, the author provides formulas only up to multiplicative constants.  In this case, the constant
can be found by enforcing the requirement that $\int_{\mathbb{R}}K(x)dx=1$.}
or \cite[Section 1.7]{Oberhettinger}. 
It immediately follows that \(K\) is completely monotone on $(0,\infty)$, and specifically 
the properties given in (i) are immediate. Concerning (ii), the function 
\(\xi \mapsto \frac{\tanh \xi}{\xi}\) is analytic in the strip 
\(\RM \times (-\pi/2,\pi/2)\)
in the complex plane. 
By shifting contours and appealing to Cauchy's integral theorem, it can then be shown that
\[
\int_{-\infty}^\infty\frac{\tanh(x)}{x}e^{i\xi x}dx=
\int_{-\infty}^\infty\frac{\tanh(x+is_0)}{x+is_0}e^{i\xi (x+is_0)}dx
\]
for all $s_0\in(0,\pi/2)$, from which exponential decay follows from the integrability
of $\textstyle{\frac{\tanh(x+is_0)}{x+is_0}e^{i\xi x}}$; see \cite{EWhighest} for details
in a closely related context.

To prove (iii), because \(K\) is smooth outside of the origin it is enough to establish the representation \eqref{eq:K_twoparts} for \(|x| \ll 1\). But there it follows from the analytic expansions of \(\cosh(x)\) and \(\sinh(x)\) when combined with rudimentary properties of the logarithm. The asymptotic formula for \(K_\text{reg}\) is obtained via the same expansion.
\end{proof}

\subsection{The operator \(\mathcal{K}\) on periodic functions}

As our interest lies in periodic solutions of \eqref{profile}, we now describe how $\mathcal{K}$ acts on periodic
functions.  Since $K$ lies in $L^1(\mathbb{R})$, given any $f\in L^\infty(\mathbb{R})$ that is $2\pi$-periodic, we can
write
\[
\mathcal{K}f(x)=\int_{-\pi}^\pi \left(\sum_{k\in\mathbb{Z}}K(x-y+2\pi k)\right)f(y)dy=: \int_{-\pi}^{\pi}  K_p(x-y)f(y)dy.
\]
By Lemma~\ref{p:kernel}, the periodized kernel $K_p$ is readily seen to converge {absolutely}  and to admit the Fourier series expansion
\[
K_p(x)=\sum_{n\in\mathbb{Z}}\frac{\tanh(n)}{n}\exp(inx).
\]
Consequently, the {convolution theorem} guarantees that $\mathcal{K}$ acts on 
$2\pi$-periodic functions in the same way as on functions on the line, namely
as
\[
\mathcal{K}f(x)=\sum_{n\in\mathbb{N}}\hat{f}(n)\left(\frac{\tanh(n)}{n}\right)\exp(inx),
\]
for any \(2\pi\)-periodic function or tempered distribution $f$.  

\begin{lemma}\label{lemma:Kp}
The periodic integral kernel $K_p$ is completely monotone on the half-period \((0,\pi)\). Moreover:
\begin{itemize}
\item[(i)] \(K_p\) is even, strictly positive on $\mathbb{R}\setminus 2\pi\mathbb{Z}$, and satisfies
\[
\|K{_p}\|_{L^1(-\pi,\pi)}=1.\\[6pt]
\] 

\item[(ii)] \(K_p\) is smooth on \(\RM \setminus 2\pi\mathbb{Z}\).\\[-6pt]

\item[(iii)]  \(K_p\) has the canonical representation
\begin{equation}\label{eq:Kp_twoparts}
K_p(x)=-2\log\left|\frac{\pi x}{4}\right|+K_{p,{\rm reg}}(x),
\end{equation}
with  \(K_{p,\text{reg}} \in C^\infty(-\pi,\pi)\) being the regular part of \(K_p\).
\end{itemize}
\end{lemma}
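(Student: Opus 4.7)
The three bulleted claims (i)--(iii) I would derive by direct periodization of Lemma~\ref{p:kernel}. For (ii), the exponential decay $|\partial_x^n K(x)| \lesssim \exp(-s_0|x|)$ for $|x| \geq 1$, applied to each shifted copy $K(\cdot - 2\pi k)$, makes the series $K_p(x) = \sum_{k \in \ZM} K(x - 2\pi k)$ converge absolutely and uniformly, together with all its termwise derivatives, on compact subsets of $\RM \setminus 2\pi\ZM$. For (i), evenness of $K_p$ follows from evenness of $K$ via $k \mapsto -k$, termwise strict positivity of $K$ on $\RM \setminus \{0\}$ gives strict positivity of $K_p$ on $\RM \setminus 2\pi\ZM$, and the $L^1$ normalization follows from Tonelli's theorem (applicable since $K \geq 0$), which gives $\|K_p\|_{L^1(-\pi,\pi)} = \sum_{k} \int_{-\pi - 2\pi k}^{\pi - 2\pi k} K(y)\, dy = \|K\|_{L^1(\RM)} = 1$. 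Part (iii) then comes by isolating the $k = 0$ term, inserting the splitting of Lemma~\ref{p:kernel}(iii), and absorbing the smooth residual sum $\sum_{k \neq 0} K(\cdot - 2\pi k)$ into $K_{p,\mathrm{reg}}$.

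The genuinely subtle claim is that $K_p$ is completely monotone on $(0, \pi)$. A termwise argument fails: the reflected shifts $x \mapsto K(2\pi k - x)$ for $k \geq 1$ are not completely monotone in $x$, since the chain rule inserts extra factors of $-1$ that destroy the required sign alternation of derivatives. My plan is therefore to sum the series first and recover a manifestly monotone representation. Bernstein's theorem, applied to Lemma~\ref{p:kernel}, yields a positive Borel measure $\mu$ on $(0,\infty)$ with $K(y) = \int_0^\infty e^{-yt}\, d\mu(t)$ for $y > 0$; only positivity of $\mu$ will matter. Substituting into the defining sum for $K_p$, using evenness to write $K(x - 2\pi k) = K(|x - 2\pi k|)$, and summing the resulting geometric series in $e^{-2\pi t}$ should collapse the expression, for $x \in (0, 2\pi)$, to the closed form
\[
K_p(x) = \int_0^\infty \frac{\cosh\bigl((\pi - x) t\bigr)}{\sinh(\pi t)}\, d\mu(t).
\]

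With this representation in hand, differentiation under the integral, justified by the exponential decay of $\cosh((\pi - x)t)/\sinh(\pi t)$ in $t$ for $x$ in a compact subinterval of $(0,\pi)$, produces $(-1)^n \partial_x^n K_p(x) = \int_0^\infty t^n \chi_n((\pi - x)t)/\sinh(\pi t)\, d\mu(t)$, with $\chi_n = \cosh$ for $n$ even and $\chi_n = \sinh$ for $n$ odd. Since $\pi - x > 0$ on $(0, \pi)$, both $\cosh((\pi-x)t)$ and $\sinh((\pi-x)t)$ are nonnegative for $t > 0$, which delivers $(-1)^n \partial_x^n K_p(x) \geq 0$ and hence complete monotonicity. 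The hard part will be this closed-form step: one must resist attacking the sum term by term and instead sum the geometric series in the translates first, after which the sign alternation of the derivatives is governed by the sign of $\pi - x$ rather than of $x$ itself, which is exactly what allows complete monotonicity to survive the periodization.
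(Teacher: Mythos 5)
Your treatment of (i)--(iii) matches the paper's, and your proof of complete monotonicity is correct, but it takes a genuinely different route from the paper's own argument. You invoke Bernstein's theorem to write $K$ as a Laplace transform of a positive measure $\mu$, sum the geometric series in $e^{-2\pi t}$, and arrive at the closed form $K_p(x)=\int_0^\infty \cosh\bigl((\pi-x)t\bigr)/\sinh(\pi t)\,d\mu(t)$ for $x\in(0,2\pi)$; differentiating under the integral and using $\pi-x>0$ then makes the sign alternation automatic. This is precisely the strategy of \cite[Proposition 3.2]{EWhighest}, which the authors cite and deliberately bypass, preferring a ``more direct proof relying simply on the complete monotonicity of $K$ and the decay of it and its derivatives.'' Their argument groups the periodization in pairs $K^{(n)}(x+2k\pi)+K^{(n)}(x-2(k+1)\pi)$: for even $n$ both terms are positive; for odd $n$ the negative term dominates, since $|K^{(n)}|$ is even and strictly decreasing (as $K^{(n+1)}>0$) and $|x+2k\pi|<(2k+1)\pi<|x-2(k+1)\pi|$ on $(0,\pi)$. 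So your remark that ``a termwise argument fails'' is too pessimistic: a naive term-by-term approach fails, but the paper's paired grouping is a direct argument that succeeds with no Laplace machinery. Your route is conceptually cleaner --- the periodized sum collapses to one integral whose sign analysis is trivial --- but it requires the Bernstein representation together with a convergence check that you should make explicit: the integrand behaves like $1/(\pi t)$ as $t\to 0^+$, and its $\mu$-integrability follows from $\int_0^\infty t^{-1}\,d\mu(t)=\tfrac12\|K\|_{L^1(\RM)}<\infty$, while the exponential decay in $t$ on compact subsets of $(0,\pi)$ covers differentiation under the integral. The paper's version is more elementary and self-contained.
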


\begin{remark}\label{rem:sign-changing}
It follows from the strict positivity of \(K_p\) that any periodic solution of \eqref{profile} that attains \(\varphi(x_0) = 0\) for some \(x_0\) is either trivially zero, or must change signs.
\end{remark}

{
\begin{remark}
The fact that $K_p$ is completely monotone on $(0,\pi)$ follows immediately from \cite[Proposition 3.2]{EWhighest},
where the authors used Bernstein's theorem to show that the periodization of an even, integrable function on $\RM$ that is completely
monotone on $(0,\infty)$ is itself completely monotone on a half-period.  Nevertheless, here we provide
a more direct proof relying simply on the complete monotonicity of $K$ and the decay of it and its
derivatives.
\end{remark}
}

\begin{proof}
We observe first that the periodized kernel \(K_p\) inherits its parity and positivity directly from the kernel 
\(K\) studied in the previous section.  
Recall {also} that, with the origin as the sole exception, the kernel \(K\) and 
all its derivatives are smooth with exponential decay. For \(K_p\) as in the proposition we then have 
for any fixed $x\in(0,\pi)$
\begin{equation}\label{eq:monotone_periodic}
\begin{aligned}
(-1)^n\partial_x^n K_p(x) &= (-1)^n\sum_{k \in \ZM} K^{(n)}(x + 2k\pi)\\ 
&= (-1)^n\sum_{k \geq 0} \left( K^{(n)}(x+2k\pi) + K^{(n)}(x-2(k+1)\pi)\right).
\end{aligned}
\end{equation}
If \(n=2m\) is even, it is clear from the positivity of \(K^{(2m)}\),which is even, that \(K_p^{(2m)}\) is positive as well. So assume that \(n=2m +1\) is odd, and let \(a_k = x + 2k\pi\),  \(b_k = x - {2(k+1)\pi}\). Then \(K^{(2m+1)}(a_k) < 0\), whereas \(K^{(2m+1)}(b_k) > 0\), for all \(x \in (0,\pi)\) and all integers \(k \geq 0\), by the complete monotonicity of \(K\). We thus want 
\[
|K^{(2m+1)}(a_k)| >  |K^{(2m+1)}(b_k)|. 
\]
By the evenness of \(K^{(2m)}\), we have \(|K^{(2m+1)}(\zeta)| = |K^{(2m+1)}(-\zeta)|\) for any \(\zeta \neq 0\). And since \(K^{(2m+2)}\) is positive, \(|\zeta| \mapsto |K^{{(2m+1)}}(|\zeta|)|\) is furthermore a strictly decreasing function of \(|\zeta|\), so that 
\[
|a_k| < (k+1/2)2\pi < |b_k|
\] 
guarantees that \(|K^{(2m+1)}(a_k)| >  |K^{(2m+1)}(b_k)|\). Hence, the sum in \eqref{eq:monotone_periodic} is strictly positive for all \(x \in (0, \pi)\), thus verifying complete monotonicity on the half-period $(0,\infty)$.

Properties (i) and (ii) follow from the evenness, positivity, and decay of $K$ established
in Lemma \ref{p:kernel} above. Finally,
\begin{align*}
K_p(x) &= K(x) + \sum_{k \neq 0} K(x + 2k\pi)\\ 
&= -\log(\pi x) + K_\text{reg}(x) + \sum_{k \neq 0} K(x + 2k\pi),
\end{align*}
for \(x \in (0,\pi)\), which gives the representation formula for \(K_p\). 
\end{proof}

\subsection{Functional-analytic framework: H\"older and Zygmund spaces }\label{subsec:functional}
Before we address the existence of solutions of \eqref{profile}, we describe the functional-analytic framework
used throughout this work.  
In principle, we wish to work on a space of functions capable of capturing an appropriate scale of
smoothness, while at the 
same time behaving well under the action of Fourier multipliers.  
It turns out that such a space is given by the H\"older (more precisely, Zygmund) spaces, which we now briefly describe.

For $0<\alpha<1$ we define the space $C^\alpha(\mathbb{S})$ of $\alpha$-H\"older continuous functions on {the unit circle} $\mathbb{S}$
to consist of all continuous, $2\pi$-periodic functions $u$ such that 
\[
 |u(x)-u(y)|   \lesssim  |x-y|^\alpha
\]
for all $x,y\in\mathbb{R}$, and we equip $C^\alpha(\mathbb{S})$ with the norm
\[
\|u\|_{C^\alpha(\mathbb{S})}:=\sup_{x\neq y}\frac{|u(x)-u(y)|}{|x-y|^\alpha}, \qquad \alpha\in(0,1).
\]
For $k=0,1,2,\ldots$ we take $C^k(\mathbb{S})$ to denote all $k$-times continuously differentiable functions
on $\mathbb{S}$, equipped with the norm 
\[
\|u\|_{C^k(\mathbb{S})}:=\sum_{j=0}^k\left\|\partial_x^j u\right\|_{L^\infty(\mathbb{S})}, \qquad k=0,1,2,\ldots.
\]
{If then} $s=k+\alpha$ for some $k=0,1,2,\ldots$ and $\alpha\in(0,1)$ we define $C^s(\mathbb{R})$ to be the set of all
functions $u\in C^k(\mathbb{S})$ such that $\partial_x^ku\in C^\alpha(\mathbb{S})$, and we equip this space with the norm
\[
\|u\|_{C^s(\mathbb{S})}:=\|u\|_{C^k(\mathbb{S})}+\|u\|_{C^\alpha(\mathbb{S})}. 
\]

While the H\"older spaces provide a quantitative measurement of the modulus of continuity of a function, it is not immediately clear
how such spaces behave under the action of Fourier multipliers.
Thankfully, H\"older spaces have a particularly nice characterization, similar to that of the Lebesgue or Sobolev spaces, in terms
of Littlewood--Paley theory.  Indeed, if we consider the partition of unity
\[
1=\sum_{j=0}^\infty\psi_j(n)^2
\]
with $\psi_j$ supported on $2^j\leq |n|< 2^{j+1}$ and $\psi_j(n)=\psi_1(2^{1-j}n)$ for $j\geq 1$, and on $|n|\leq 2$
when $j=0$,  then Littlewood-Paley theory
gives the following.

\begin{proposition}{\cite{Taylor}}
If $u\in C^s(\mathbb{S})$ for some $s\geq 0$, then
\begin{equation}\label{zygmund}
\sup_{j} 2^{js}\left\|\psi_j^2(D)u\right\|_{L^\infty(\mathbb{S})}<\infty.
\end{equation}
Furthermore, \eqref{zygmund} guarantees $u\in C^s(\mathbb{S})$ for non-integer values of $s$.
\end{proposition}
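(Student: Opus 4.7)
The plan is to handle both implications through the convolution structure of the frequency-localized multipliers $\psi_j^2(D)$, combined with the Bernstein-type inequalities that are natural for functions band-limited at a single dyadic scale. Writing $K_j$ for the periodic kernel associated with $\psi_j^2$, the fact that $\psi_j^2$ is a bump function concentrated at $|n| \sim 2^j$ guarantees $\|K_j\|_{L^1(\mathbb{S})} \lesssim 1$ uniformly in $j$, and $\psi_j(0)=0$ for $j\geq 1$ ensures $\int_{\mathbb{S}} K_j = 0$. The first preparatory step would be the standard Bernstein inequality
\[
\|\partial_x^m \psi_j^2(D) u\|_{L^\infty(\mathbb{S})} \lesssim 2^{jm}\, \|\psi_j^2(D) u\|_{L^\infty(\mathbb{S})},
\]
obtained by absorbing derivatives into the smooth frequency cutoff.

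For the forward direction, write $s = k + \alpha$ with $k \in \mathbb{Z}_{\geq 0}$ and $\alpha \in [0,1)$. The case $j=0$ is trivial. For $j \geq 1$, the zero-mean and essentially Schwartz-class behavior of $K_j$ at spatial scale $2^{-j}$ give the moment estimate $\int_{\mathbb{S}} |K_j(y)|\, |y|^\alpha\, dy \lesssim 2^{-j\alpha}$, so that
\[
\psi_j^2(D)(\partial_x^k u)(x) = \int_{\mathbb{S}} K_j(y) \bigl[\partial_x^k u(x-y) - \partial_x^k u(x)\bigr]\, dy
\]
combined with the $\alpha$-H\"older hypothesis on $\partial_x^k u$ yields $\|\psi_j^2(D) \partial_x^k u\|_{L^\infty} \lesssim 2^{-j\alpha}\|u\|_{C^s}$. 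Moving the $k$ derivatives back through the multiplier via Bernstein produces $\|\psi_j^2(D) u\|_{L^\infty} \lesssim 2^{-j(k+\alpha)} \|u\|_{C^s}$, which is exactly \eqref{zygmund}.

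For the converse I would exploit the decomposition $u = \sum_{j \geq 0} \psi_j^2(D) u$ coming from the partition of unity. The hypothesis combined with Bernstein gives $\|\partial_x^m \psi_j^2(D) u\|_{L^\infty} \lesssim 2^{j(m-s)}$, so for $m = k$ with $s = k+\alpha$, $\alpha \in (0,1)$, the differentiated series converges absolutely in $L^\infty$ and identifies $\partial_x^k u$. To recover the H\"older norm of $\partial_x^k u$, I would split the series at the frequency $N$ with $2^{-N} \sim |x-y|$. For low-frequency pieces ($j \leq N$), the mean value theorem with $\|\partial_x^{k+1}\psi_j^2(D) u\|_{L^\infty} \lesssim 2^{j(1-\alpha)}$ contributes $\lesssim 2^{N(1-\alpha)}|x-y| \lesssim |x-y|^\alpha$; for high-frequency pieces ($j > N$), direct $L^\infty$ bounds give $\lesssim 2^{-j\alpha}$, summing to $\lesssim 2^{-N\alpha} \lesssim |x-y|^\alpha$.

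The main obstacle is ensuring that both geometric sums in the converse actually converge with the correct exponent: the balance works precisely because $0 < \alpha < 1$, and it is exactly this point which forces the non-integer hypothesis in the second half of the statement. At integer values of $s$ a logarithmic defect appears and the characterization requires passing to the slightly larger Zygmund class, a phenomenon closely related to the logarithmic loss incurred by $\mathcal{K}$ on the H\"older scale, as highlighted in Subsection~\ref{subsec:functional}.
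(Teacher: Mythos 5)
The paper offers no proof of this proposition; it is cited directly from Taylor's book. Your argument is the standard Littlewood--Paley proof of the H\"older characterization and is essentially correct: the forward direction uses the zero mean and dyadic concentration of the kernel $K_j$ to produce a $2^{-j\alpha}$ gain against the $\alpha$-H\"older modulus of $\partial_x^k u$, and trades the $k$ derivatives for a factor $2^{-jk}$; the converse splits the Littlewood--Paley series at a frequency scale $2^N \sim |x-y|^{-1}$ and uses the mean value theorem below the cutoff and raw $L^\infty$ bounds above, with both tails geometric precisely because $\alpha \in (0,1)$ --- so your diagnosis of why the non-integer hypothesis matters is exactly right.

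One presentational wrinkle worth flagging: the Bernstein inequality you display,
\[
\|\partial_x^m \psi_j^2(D) u\|_{L^\infty(\mathbb{S})} \lesssim 2^{jm}\, \|\psi_j^2(D) u\|_{L^\infty(\mathbb{S})},
\]
runs in the wrong direction for the step where you claim it gives $\|\psi_j^2(D) u\|_{L^\infty} \lesssim 2^{-j(k+\alpha)}$ from $\|\psi_j^2(D) \partial_x^k u\|_{L^\infty} \lesssim 2^{-j\alpha}$. What you actually need there is the \emph{inverse} Bernstein estimate $\|\psi_j^2(D) u\|_{L^\infty} \lesssim 2^{-jk}\|\partial_x^k\psi_j^2(D) u\|_{L^\infty}$, valid for $j \geq 1$ because the Fourier support of $\psi_j^2(D)u$ lies in the annulus $|n| \eqsim 2^j$ bounded away from the origin, so one may compose with a smooth multiplier agreeing with $(in)^{-k}$ there whose kernel has $L^1$ norm $\lesssim 2^{-jk}$. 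Your phrase ``moving the derivatives back through the multiplier'' makes clear you have the right mechanism in mind, but the displayed inequality does not by itself justify that step. With that clarified, the proof is complete.
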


It follows that as long as $s$ is not an integer, the H\"older spaces can be characterized completely by Fourier series, and hence
behave nicely under the action of general Fourier multipliers (for more details on this and other statements in section, we refer the reader to \cite[Chap. 17]{Taylor}).  To state this precisely, introduce 
periodic Zygmund spaces $C^s_*(\mathbb{S})$, $s\geq 0$, 
consisting of all continuous functions $u$ on $\mathbb{S}$ such that  \eqref{zygmund} holds.  For each $s\geq 0$
we equip $C^s_*(\mathbb{S})$ with the obvious norm
\[
\|u\|_{C^s_*(\mathbb{S})}:=\sup_{j} 2^{js}\left\|\psi_j^2(D)u\right\|_{L^\infty(\mathbb{S})}
\]
and note that, under this norm, the Zygmund spaces are Banach spaces and that
\[
C^s(\mathbb{S})=C^s_*(\mathbb{S})~~{\rm if}~~s\in(0,\infty)\setminus\mathbb{N},
	\qquad   C^k(\mathbb{S})  \hookrightarrow  C^k_*(\mathbb{S})~~\textrm{if}~~s\in\NM_0.
\]
The next result asserts that Fourier multipliers act on the Zygmund spaces in much the same way as they act on Sobolev spaces.

\begin{proposition}{\cite{Taylor}}
Suppose that $f:\mathbb{R}\to\mathbb{R}$ is a smooth function such that, for some $m\in\mathbb{R}$ and any \(k \in \NM_0\), 
\[
\left|\partial_\xi^k f(\xi)\right|\lesssim \left(1+|\xi|\right)^{m-k}
\]
for all $\xi \in\mathbb{R}$.  Then 
\(
f(D)\in\mathcal{L}\left(C^{s+m}_*(\mathbb{S}),C^{s}_*(\mathbb{S})\right)
\)
for all \(s \geq 0\).
\end{proposition}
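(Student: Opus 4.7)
The plan is to unfold the Littlewood--Paley definition of the Zygmund norm and reduce the claim to a uniform $L^1$-bound on the convolution kernels of the dyadic pieces of $f(D)$, following the standard proof of Mikhlin-type multiplier theorems on the torus. For concreteness I assume the $\psi_j$ are chosen smooth on $\RM$ with $\psi_j(\xi) = \psi_1(2^{1-j}\xi)$ for $j \geq 1$ and $\sum_j \psi_j^2 \equiv 1$, which only affects $\psi_j^2(D)$ on the integer lattice and so does not alter the Zygmund seminorm.

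\emph{Step 1 (frequency localization).} For each $k \in \NM_0$ one has
\[
\psi_k^2(D) f(D) u = \sum_j \psi_k^2(D) f(D) \psi_j^2(D) u.
\]
Since the Fourier supports of $\psi_j$ and $\psi_k$ are essentially disjoint for $|j - k|$ large, only a bounded number of indices $j$ with $|j - k| \leq 1$ contribute. It therefore suffices to show
\[
\|\psi_k^2(D) f(D) \psi_j^2(D) u\|_{L^\infty(\mathbb{S})} \lesssim 2^{jm} \|\psi_j^2(D) u\|_{L^\infty(\mathbb{S})}
\]
uniformly for such $j,k$; multiplying by $2^{ks}$, summing the finitely many contributions, and taking the supremum over $k$ then yields $\|f(D) u\|_{C^s_*(\mathbb{S})} \lesssim \|u\|_{C^{s+m}_*(\mathbb{S})}$, as required.

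\emph{Step 2 (kernel estimate).} The operator $T_{j,k} := \psi_k^2(D) f(D) \psi_j^2(D)$ acts on $2\pi$-periodic functions by convolution with the periodic kernel
\[
K_{j,k}(x) = \sum_{n \in \ZM} \sigma_{j,k}(n)\, e^{inx}, \qquad \sigma_{j,k}(\xi) := \psi_k(\xi)^2 f(\xi) \psi_j(\xi)^2.
\]
The crux is the uniform bound $\|K_{j,k}\|_{L^1(\mathbb{S})} \lesssim 2^{jm}$, from which Young's convolution inequality delivers the desired $L^\infty$-estimate. Viewed as a function on $\RM$, $\sigma_{j,k}$ is smooth, supported in the annulus $\{|\xi| \eqsim 2^j\}$, and by the Leibniz rule combined with $|\partial_\xi^\ell \psi_j(\xi)| \lesssim 2^{-j\ell}$ and the hypothesized bound on $\partial_\xi^\ell f$, satisfies $|\partial_\xi^\ell \sigma_{j,k}(\xi)| \lesssim 2^{j(m-\ell)}$ for every $\ell \in \NM_0$. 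The rescaling $\xi = 2^j \eta$ turns $2^{-jm}\sigma_{j,k}(2^j \eta)$ into a smooth function compactly supported in a fixed annulus, uniformly bounded with all its derivatives in $j$ and $k$; its inverse Fourier transform on $\RM$ is therefore Schwartz, and in particular lies in $L^1(\RM)$ with norm uniform in $j,k$. Undoing the scaling yields $\|\mathcal{F}^{-1}\sigma_{j,k}\|_{L^1(\RM)} \lesssim 2^{jm}$, and Poisson summation identifies $K_{j,k}$ (up to a universal constant) with the periodization of $\mathcal{F}^{-1}\sigma_{j,k}$, so $\|K_{j,k}\|_{L^1(\mathbb{S})} \lesssim 2^{jm}$, as claimed.

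\emph{Main obstacle.} Essentially all the work lies in the uniform kernel bound of Step~2; the remainder is bookkeeping with the Littlewood--Paley decomposition. The hypothesized symbol estimates on $f$ are tailored precisely so that after dyadic rescaling the relevant function becomes a bump with uniform bounds in $j$, giving a Schwartz kernel on $\RM$ and hence a controllable $L^1$ periodic kernel. The only point that requires care is the verification that rescaling and Poisson summation interact well with $L^1$-norms, which they do up to universal constants; one also has to check separately the low-frequency block $k=0$, but there only finitely many Fourier modes are involved and the bound is trivial.
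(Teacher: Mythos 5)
The paper does not include a proof of this proposition; it cites Taylor's book for the result, so there is no in-paper argument to compare your proposal against. Your proposal follows the standard Littlewood--Paley/Mikhlin route (dyadic localization, Leibniz plus rescaling for uniform bounds, Poisson summation to transfer the $L^1$-kernel estimate to the torus, then Young), which is the right strategy and is essentially what one finds in Taylor.

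There is, however, a small but real gap in how Step~2 connects back to Step~1. In Step~1 you reduce the claim to the estimate
\[
\|\psi_k^2(D)f(D)\psi_j^2(D)u\|_{L^\infty(\mathbb{S})}\lesssim 2^{jm}\,\|\psi_j^2(D)u\|_{L^\infty(\mathbb{S})},
\]
which has $\|\psi_j^2(D)u\|_\infty$ on the right. But in Step~2 you bound the $L^1$-norm of $K_{j,k}$, the kernel of $T_{j,k}$ \emph{acting on $u$}, so Young's inequality gives $\|T_{j,k}u\|_\infty\lesssim 2^{jm}\|u\|_\infty$ --- with $\|u\|_\infty$, not $\|\psi_j^2(D)u\|_\infty$. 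That weaker bound does not suffice: after summing over $|j-k|\leq 1$ and multiplying by $2^{ks}$ you would get $2^{k(s+m)}\|u\|_\infty$, which is unbounded in $k$ when $s+m>0$. The fix is straightforward and uses the same estimate: factor $T_{j,k}u=(\psi_k^2 f)(D)\bigl[\psi_j^2(D)u\bigr]$ and instead bound the $L^1$-norm of the kernel of the multiplier $\psi_k^2 f$, which is supported on $\{|\xi|\eqsim 2^k\}$ and obeys $|\partial_\xi^\ell(\psi_k^2 f)|\lesssim 2^{k(m-\ell)}$ by exactly the Leibniz computation you already performed (indeed it is simpler, since the factor $\psi_j^2$ is absent). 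Then Young applied to $(\psi_k^2 f)(D)$ acting on $v=\psi_j^2(D)u$ gives $\|T_{j,k}u\|_\infty\lesssim 2^{km}\|\psi_j^2(D)u\|_\infty\eqsim 2^{jm}\|\psi_j^2(D)u\|_\infty$ for $|j-k|\leq 1$, which is what Step~1 needs. With this adjustment the proof is complete.
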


Now, if $f\in C^\alpha(\mathbb{S})$ with $\alpha>1/2$ one has that 
\[
f(x) \equiv \sum_{k\in\mathbb{Z}}\hat{f}(k) \exp(ikx) \quad\textrm{with}\quad\sum_{k\in\mathbb{Z}}|\hat{f}(k)|<\infty;
\]
in particular, the Fourier series of $f$ converges absolutely for all $x\in\mathbb{R}$.
Since \(\xi \mapsto \tanh(\xi)/\xi\) is smooth with
\[
 \left|\partial_\xi^k\left(\frac{\tanh(\xi)}{\xi}\right)\right| \lesssim \left(1+|\xi|\right)^{-1-k}
\]
for all \(k\in\NM_0\), it follows that 
\begin{equation}\label{eq:mapping}
\mathcal{K}: C^s_*(\mathbb{S})\to C^{s+1}_*(\mathbb{S})\quad  \text{for all } s\geq 0.
\end{equation}
The fact that $\mathcal{K}$ has a negative \emph{integer} order presents an interesting challenge
in the forthcoming analysis, especially in {regard} to the global regularity of solutions
of \eqref{profile}. Indeed, observe that if $f\in C^0(\mathbb{S})\subset C^0_*(\mathbb{S})$
then the function $\mathcal{K}f$ is only guaranteed to belong to $C^1_*(\mathbb{S})$, and hence
may not be continuously differentiable or even Lipschitz continuous on $\mathbb{S}$.  We will
return to this point at the end of Section \ref{sec:a priori}.

\section{Numerical Observations}\label{s:num}
Our aim is to establish the existence of a cusped traveling wave solution
of the nonlocal profile equation \eqref{profile}.  Such a solution will be shown to exist through
the construction of a global bifurcation curve of traveling wave solutions
with fixed period, the end of which will be a {logarithmically} cusped wave.  A similar analysis was recently 
performed on the unidirectional Whitham equation \eqref{e:w}.  {That series of papers} started as a theoretical and numerical investigation of the local \cite{EK09} and global \cite{EK11} bifurcation of traveling wave solutions, and ended with the establishing of a highest, cusped, wave in the recent investigation \cite{EWhighest}. Just as the numerical investigations in \cite{EK09,EK11} served as a starting
point for the rigorous search for cusped solutions of \eqref{e:w}, the purpose of this section is to provide analogous numerical findings 
for the equation \eqref{profile}. We thereby hope to motivate the analytical theory by pointing out some key features of solutions along the global bifurcation branch that will be central to our later analysis, as well as some that are conjectures that have not yet been proved analytically.
See also \cite{CJ17}, where the authors numerically investigate the global bifurcation and dynamic stability of periodic traveling wave solutions
of various bidirectional, full-dispersion water wave models.

\begin{figure}[t]
\begin{center}
\includegraphics[scale=0.5]{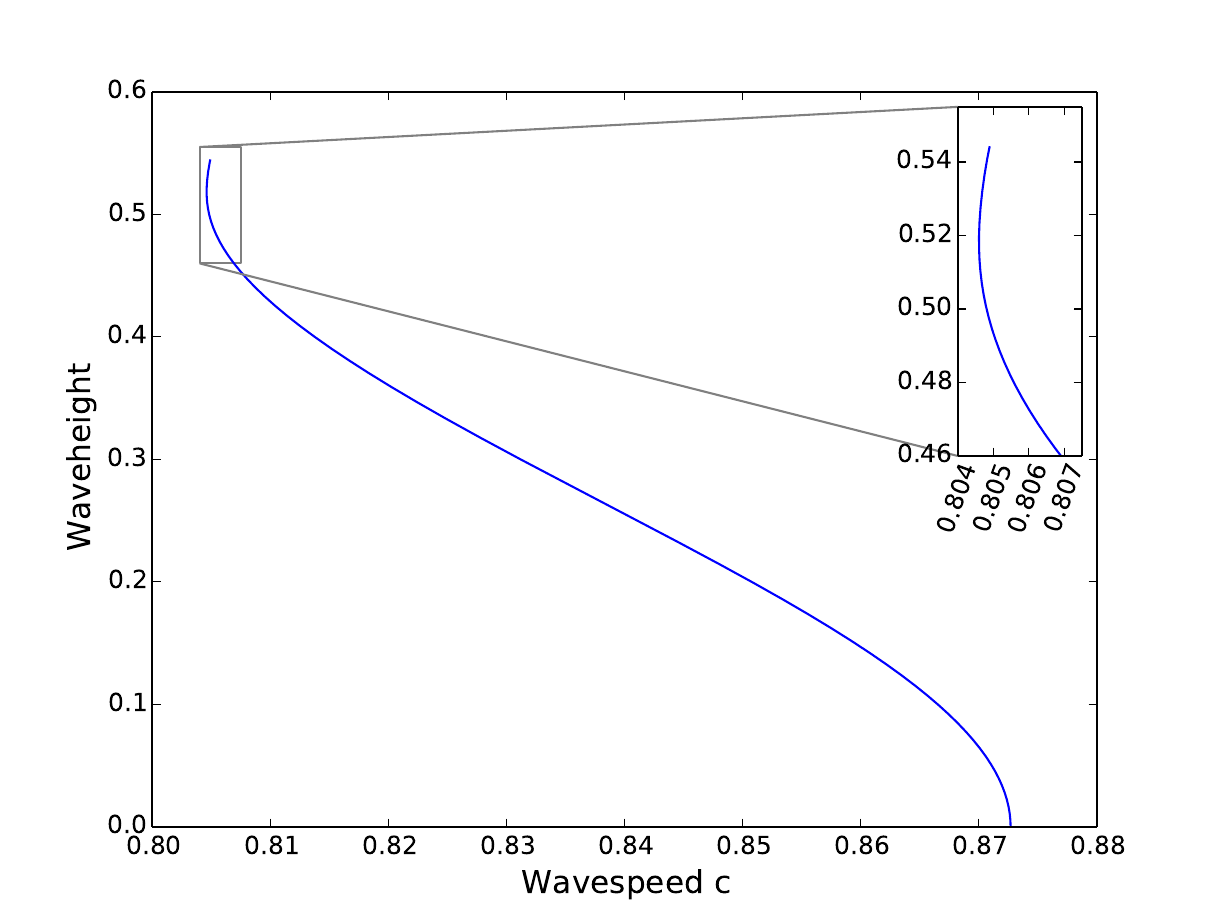}
\end{center}
\caption{ \small A numerical approximation of the bifurcation branch of even, {one-sided monotone,} $2\pi$-periodic
solutions of \eqref{profile} with wave speed $c$.  A zoom is provided near the turning
point. }
\label{fig:bifcurve1}
\end{figure}

In Proposition \ref{prop:local} and Theorem \ref{thm:global} we prove the existence of a local and global branch, respectively, 
of small-amplitude, $2\pi$-periodic even and {one-sided monotone solutions $\varphi({\tau})$} of \eqref{profile}
with wave speed $c({	\tau})$. These bifurcate from the trivial solution when $c_0^2 =  \tanh{(1)}$, and may be numerically
approximated through the use of a spectral cosine collocation method which will be outlined in Appendix
\ref{Appendix}.  By the discussion at the beginning of Section~\ref{s:prelim}, we expect
this bifurcation curve to continue to a highest wave having maximum height at $\varphi=\gamma$.
Here, we briefly present some of the numerical calculations performed using the 
methods described in Appendix \ref{Appendix}.

Figure \ref{fig:bifcurve1} depicts the bifurcation branch starting at $(c_0,  \varphi_0)$, along with a close-up of the turning point and the end
of the bifurcation curve, which was found using the 
condition that $\varphi=\gamma$. 
The wave speed decreases initially, indicating a subcritical
pitchfork bifurcation that will be rigorously established in Proposition~\ref{prop:local}.
A further observation is that the wave speed $c({\tau})$ along the global bifurcation
curve is contained in a compact subinterval $(0,1)$. 
This will be a key element in characterizing the global structure
of the bifurcation curve, in particular when showing that it does not form a closed loop,
as well as when demonstrating that the limiting wave at the end of the bifurcation
curve is nontrivial, see Lemmas~\ref{lemma:uniqueness} and~\ref{lemma:c_uniformbound}.   
The structure of the bifurcation curve itself, specifically the existence of a turning point, is so far not understood.

\begin{figure}[t]
\begin{center}
\includegraphics[scale=0.5]{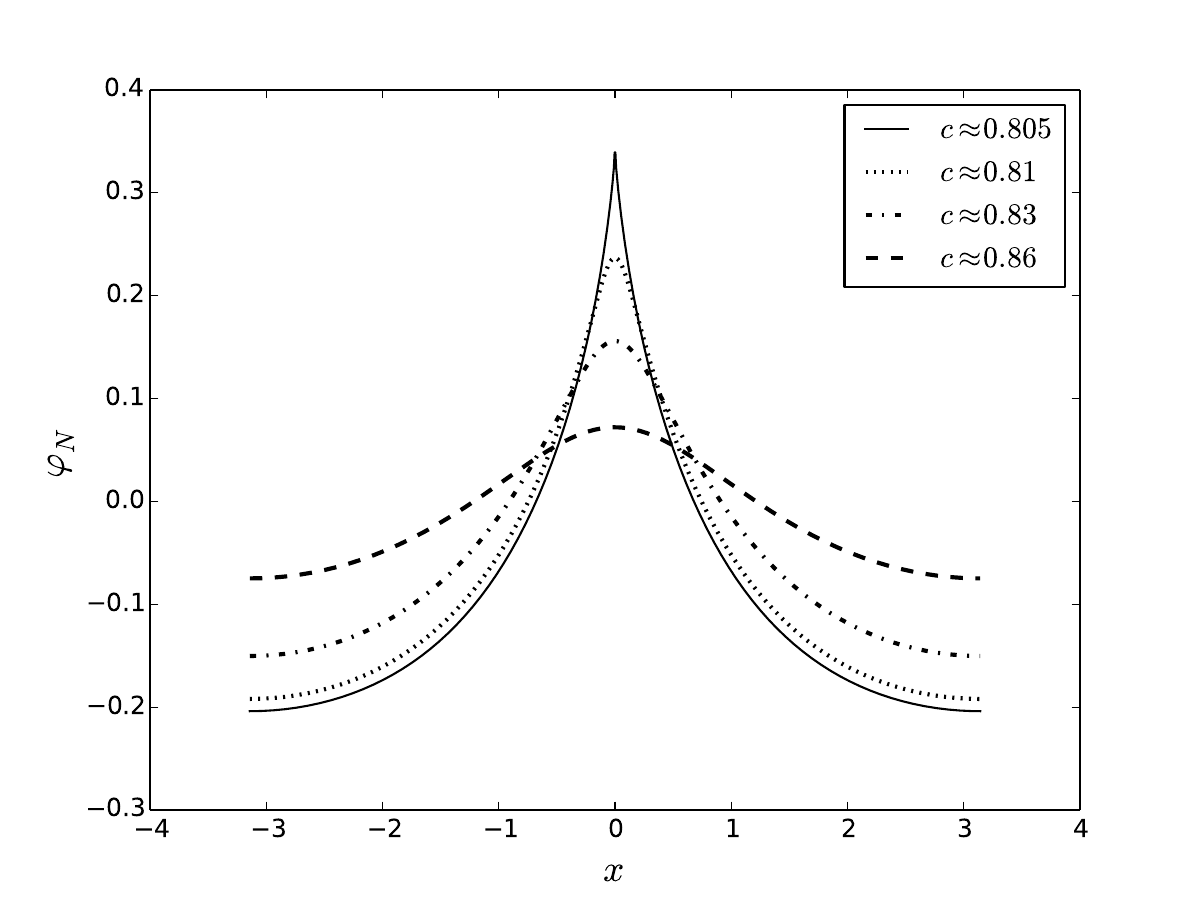}
\end{center}
\caption{\small Four different numerical profiles are shown along the bifurcation branch from Figure \ref{fig:bifcurve1}.
}
\label{fig:bifcurve2}
\end{figure}

Solutions $ \varphi$ of the profile equation \eqref{profile} with wave speed $c$ along the bifurcation curve are depicted
in Figure \ref{fig:bifcurve2}.  The increasing amplitude of the waves corresponds to moving farther up the global
bifurcation branch in Figure \ref{fig:bifcurve1}.  The small-amplitude waves are perturbations of a multiple of $  \cos{(x)}$, a fact that is consistent with the bifurcation formulas derived in Proposition~\ref{prop:local}.  As one continues along the
curve, however, the solutions become increasingly nonlinear and the local theory from Proposition
\ref{prop:local} does not yield any predictions.  From the numerical calculations
we can nevertheless make some observations also for large amplitudes.  First, it appears that solutions along the global bifurcation
curve depicted in Figure~\ref{fig:bifcurve1} are smooth and strictly increasing on a half-period with unique critical points on $[-\pi,\pi)$ given at $x=-\pi$ (minimum) and at $x=0$ (maximum).  That solutions along the bifurcation curve indeed admit such
a nodal pattern is established in Theorem~\ref{thm:nodal} below.

Next, we observe from Figure~\ref{fig:bifcurve2} that solutions become 
progressively steeper at their global maximum as the end 
of the bifurcation branch in Figure~\ref{fig:bifcurve1} is approached.  
In particular, it appears that the derivative of the limiting wave blows up at $x=0$, corresponding
to the limiting wave having a singularity at that point.  While the smoothness away from $x=0$ will be established in Lemma \ref{lemma:uniform_c-bound}, the behavior at the {crest} is more subtle. Whitham reasoned in 
\cite[p.479]{Whitham_book} that if $K_p(x)$ were to blow up 
like $|x|^{-q}$ at $x=0$ for some $q>0$, then a rudimentary scaling
analysis would suggest that the associated solution would behave like \(\gamma-{\varphi}(x) {\eqsim} |x|^{1+\delta}\) with $\delta=-q$. Since $K_p$ has a \emph{logarithmic} blowup at $x=0$, however, such a scaling analysis
{is inadequate} and a more delicate investigation is needed. A detailed investigation of the singularity
at $x=0$ of the limiting wave is the subject of Lemma~\ref{lemma:lower_blowup improved}
and the main regularity result{, Theorem~\ref{thm:reg},} below.

Although not the point of our current investigation, we point out that the dynamic stability of the periodic waves numerically 
constructed above has been recently considered.  In \cite{P17}, the author rigorously derives, using spectral perturbation theory for the linearized spectral problem, an analytical
stability index whose sign determines the modulational (spectral) stability of periodic traveling wave solutions  of \eqref{biw} with asymptotically small amplitude to localized perturbations on the line.  
Outside of this analysis of the asymptotically small waves, there is no rigorous analysis concerning the stability of solutions of \eqref{biw}.  We consider
the stability of waves in these and more general full-dispersion models outside of the small-amplitude regime as an important open problem.
We note, however, the recent work \cite{CJ17} where the global global bifurcation and spectral stability of 
large amplitude waves of \eqref{biw}, and other related bidirecitonal full-dispersion water wave models, have 
been numerically investigated. The interested reader is referred to this paper
for a number of numerical observations concerning the stability of large amplitude waves that is so far unproven.

\begin{remark}\label{signdefinite}
An important observation is that the waves constructed in this paper are not sign definite, which has important consequences relating to the local dynamics about such waves in the evolutionary PDE \eqref{dim_biw}.  Indeed, \eqref{dim_biw} is known to be locally well-posed in standard Sobolev spaces only provided that the Cauchy data has strictly positive surface elevation. The work \cite{CJ17} provides numerical evidence that this surface elevation restriction is sharp. This evolutionary perspective motivates the search also for periodic traveling wave solutions of \eqref{dim_biw} with strictly positive wave height. In \cite{CJ17}, such waves with asymptotically small amplitude were shown to bifurcate from a non-zero equilibrium state of \eqref{dim_biw} through a local bifurcation argument, and numerically continued through the global bifurcation branch of waves with strictly positive waveheight, terminating (numerically) at the line $\max(\varphi)=\gamma$ 
in a highest, cusped and elsewhere smooth traveling wave solution. The extension of our theory to such waves is described in Appendix \ref{appendix_b}.
\end{remark}

\section{A priori properties of solutions \(\varphi \leq \gamma\)}\label{sec:a priori}

We now study periodic solutions of \eqref{profile} in an appropriate subspace of $C^\alpha(\mathbb{S})$
with $\alpha\in(0,1)$.  
By a solution of \eqref{profile} we shall mean a \(2\pi\)-periodic and continuous function that satisfies the equation pointwise.  In our search for a highest wave, we will begin in Section \ref{sec:bifurcation} below by first
constructing small amplitude periodic traveling wave solutions of \eqref{profile} via a local bifurcation
argument.  These small amplitude solutions will then be continued into a global curve of large amplitude
solutions, eventually terminating into a highest wave with a cusp.  As a first step then, we begin
by studying {a priori} properties of solutions with $\varphi<\gamma$ uniformly in $x$, including in particular
the small amplitude solutions constructed via the local theory.  We end with an {a priori} estimate
on even, nondecreasing solutions which achieve the maximum height $\gamma$ at their crests, showing in particular
that such solutions cannot {be} continuously differentiable, {or} even Lipschitz at $x=0$,
and studying the global regularity of such a wave.

\

We start by noting that there are exactly three curves of trivial solutions of \eqref{profile}, namely,
\[
c \mapsto 0 \quad\text{and }\quad c \mapsto \Gamma_{\pm}(c) := \frac{3c \pm \sqrt{8 + c^2}}{2}.
\]

The latter two are reflections of each other around the diagonal \(\varphi = c\), since the map 
\[
(\varphi,c) \mapsto -(\varphi,c)
\]
describes a bijection between solutions with positive and negative wave speed. \emph{For that reason, it is enough to restrict our attention to \(c \geq 0\).} In particular, we shall primarily be concerned with pairs \(  (\varphi,c)\) such that \(   (\sup \varphi,c)\) lies in the area enclosed by \(c = 0\), \(c=1\), \(\sup \varphi = 0\) and \(\sup \varphi = \gamma\). The curve \(\Gamma_+\) is outside of this domain and will therefore not be relevant in our analysis. The curve \(\Gamma_-\), however, crosses \(c=0\) at \(\varphi = -\sqrt{2}\) and the line of zero solutions at \(c = 1\) (whereafter it reaches  \(\varphi = \gamma\) at \(c \approx \frac{3}{2}\)). We will have to deal with this fact in our limiting argument.

\begin{figure}[t]
\begin{center}
\includegraphics[width=0.8\linewidth]{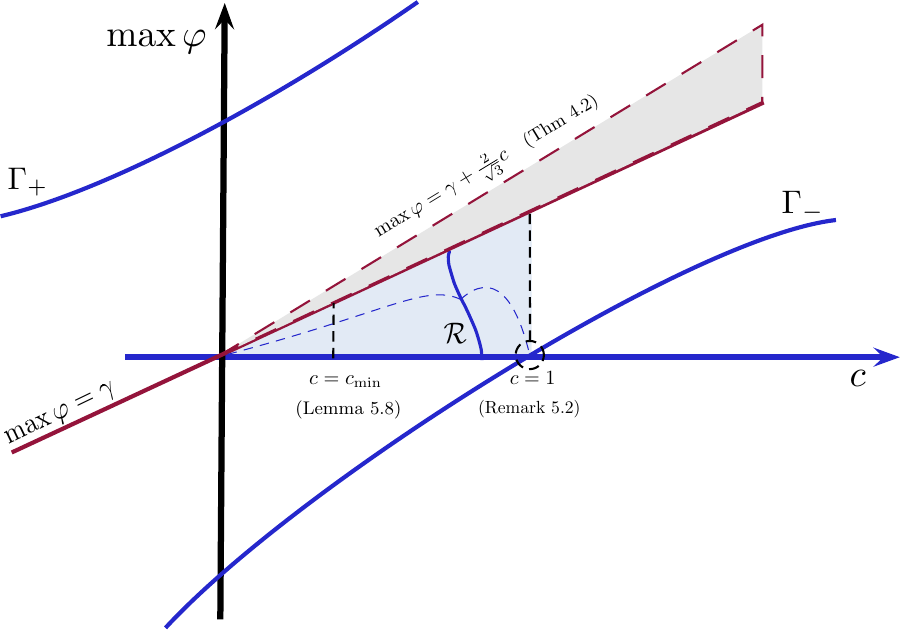}
\end{center}
\caption{
 \small The global bifurcation diagram obtained in Theorem~\ref{thm:global}. The solid blue lines indicate solutions, and the red upper bounds on \(\varphi\). The solution curves \(\Gamma_{\pm}\) and the horizontal axis correspond to constant solutions, whereas \(\mathcal R\) is the global bifurcation curve leading to the highest wave. All solutions studied in this paper are confined to the region enclosed by the lines \(\max \varphi = 0\), \(\max \varphi = \gamma\) and \(c = 1\). The transcritical bifurcation at \(c=1\) and the a priori bound on the wavespeed (established in Remark~\ref{rem:local} and Lemma~\ref{lemma:c_uniformbound}, respectively) exclude solutions branching off to zero as in the picture.} 
\end{figure}

\begin{lemma}\label{lemma:uniform_c-bound}
For all solutions \(\varphi\) of \eqref{profile}, one has the uniform estimate
\[
\|\varphi\|_{\infty} \lesssim 1 + c.
\]
Furthermore, solutions of \eqref{profile} are smooth on any open set where \(\varphi(x)< \gamma\).
\end{lemma}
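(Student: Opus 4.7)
The plan splits naturally into the two claims.

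For the $L^\infty$ bound, I would exploit the maximum-principle structure coming from the non-negativity of $K_p$. By Lemma~\ref{lemma:Kp}(i) one has $K_p\geq 0$ with $\|K_p\|_{L^1(-\pi,\pi)}=1$, so any continuous $2\pi$-periodic $\varphi$ satisfies the pointwise sandwich $\inf\varphi\leq\mathcal{K}\varphi\leq\sup\varphi$. Writing $M:=\sup\varphi$ and $m:=\inf\varphi$, both attained by compactness, and evaluating \eqref{profile} at the extremal points yields $N(M)\leq M$ and $N(m)\geq m$. The three trivial-solution curves $0,\Gamma_\pm(c)$ introduced above the lemma are by construction the roots of $N(y)-y$, so
\[
N(y)-y=\tfrac{1}{2}\,y\,(y-\Gamma_+(c))(y-\Gamma_-(c)).
\]
A direct sign analysis on this cubic, distinguishing the cases $c<1$ and $c\geq 1$, then forces $M\leq\Gamma_+(c)$ and $|m|\leq\Gamma_+(c)$ for any solution, with equality only for the three constant solutions. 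Combined with $|\Gamma_\pm(c)|\lesssim 1+c$, which is immediate from $\Gamma_\pm(c)=\tfrac{1}{2}(3c\pm\sqrt{c^2+8})$, this yields $\|\varphi\|_\infty\lesssim 1+c$.

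For the smoothness assertion, the key algebraic input is that $N^\prime(\varphi)=c^2-3c\varphi+\tfrac{3}{2}\varphi^2$ is a convex quadratic whose smaller root is $\gamma$; hence $N^\prime(\varphi)>0$ whenever $\varphi<\gamma$, and $N\colon(-\infty,\gamma)\to(-\infty,N(\gamma))$ is a smooth strictly increasing bijection with smooth inverse. On any open set $U$ where $\varphi<\gamma$, \eqref{profile} is therefore equivalent to $\varphi=N^{-1}(\mathcal{K}\varphi)$, and I would bootstrap this identity against the smoothing property \eqref{eq:mapping}.

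The main obstacle is that \eqref{eq:mapping} is a global statement in the Zygmund scale while the conclusion is local; the remedy is to invoke the pseudolocality encoded in the smoothness of $K_p$ off $2\pi\ZM$ (Lemma~\ref{lemma:Kp}(ii)). For any compact $I$ compactly contained in $U$ fix a cutoff $\chi\in C^\infty(\mathbb{S})$ with $\chi\equiv 1$ on a neighbourhood of $I$ and $\operatorname{supp}\chi\subset U$, and split
\[
\mathcal{K}\varphi(x)=\mathcal{K}(\chi\varphi)(x)+\int_{\mathbb{S}}K_p(x-y)(1-\chi(y))\varphi(y)\,dy,\qquad x\in I.
\]
For $x\in I$ the second integrand is supported where $x-y$ stays uniformly bounded away from $2\pi\ZM$, so Lemma~\ref{lemma:Kp}(ii) places the remainder in $C^\infty(I)$, while \eqref{eq:mapping} gives $\mathcal{K}(\chi\varphi)\in C^{s+1}_*(\mathbb{S})$ whenever $\varphi\in C^s_*$ in a neighbourhood of $\operatorname{supp}\chi$; composing with the smooth $N^{-1}$ yields $\varphi\in C^{s+1}_*(I)$. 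Starting from $\varphi\in C^0(\mathbb{S})\hookrightarrow C^0_*(\mathbb{S})$ and iterating, each time shrinking $I$ strictly within $U$, produces $\varphi\in C^k_*(I')$ for every $k\geq 0$ and every compact $I'\subset U$, which via $C^k_*\hookrightarrow C^{k-\varepsilon}$ for non-integer $k-\varepsilon$ collapses to $\varphi\in C^\infty(U)$.
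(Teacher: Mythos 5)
Your argument for the $L^\infty$ bound is correct but takes a genuinely different, and in fact sharper, route than the paper's. The paper simply applies the triangle inequality with $\|\mathcal{K}\|_{\mathcal{L}(L^\infty)}=1$ to the rewritten equation $\tfrac12\varphi^3=\mathcal{K}\varphi-c^2\varphi+\tfrac{3c}{2}\varphi^2$ and solves the resulting quadratic inequality in $\|\varphi\|_{L^\infty}$, obtaining $\|\varphi\|_{L^\infty}\leq\tfrac{3c+\sqrt{8+17c^2}}{2}$. You instead use the averaging property $\inf\varphi\leq\mathcal{K}\varphi\leq\sup\varphi$ (from $K_p\geq 0$, $\|K_p\|_{L^1}=1$) evaluated at the extrema, together with the factorization $N(y)-y=\tfrac12 y(y-\Gamma_+(c))(y-\Gamma_-(c))$, which, after the case analysis on the ordering of the roots, yields the tighter bound $\|\varphi\|_{L^\infty}\leq\Gamma_+(c)$. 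Your bound is best possible in that it is attained by the trivial solution $\Gamma_+$, and it ties the $L^\infty$ estimate directly to the trivial-solution structure of the equation; the paper's argument is slightly cruder but requires no case distinction and no monotonicity/positivity input beyond $\|\mathcal{K}\|=1$. For completeness you would need to actually carry out the sign analysis in both cases $c<1$ (where $\Gamma_-<0<\Gamma_+$) and $c\geq 1$ (where $0\leq\Gamma_-<\Gamma_+$), ruling out the component $M\leq\Gamma_-$ via $m\leq M$ and $N(m)\geq m$, but this is routine. Your smoothness argument — Nemytskii bootstrapping $\varphi=N^{-1}(\mathcal{K}\varphi)$ against the gain $\mathcal{K}:C^s_*\to C^{s+1}_*$, localized with a cutoff and the pseudolocality of $K_p$ off $2\pi\ZM$ — is essentially the paper's, differing only in bookkeeping (you shrink nested compacts at each step, whereas the paper fixes the ball and lets the cutoff radius vary to conclude regularity on the original ball).
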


\begin{proof}
Lemma~\ref{p:kernel} implies that $\mathcal{K}\in\mathcal{L}(L^\infty(\mathbb{S}))$, with unit operator norm. 
Rewriting \eqref{profile} as
\[
{\textstyle \frac{1}{2}} \varphi^3=\mathcal{K}\varphi-c^2\varphi+{\textstyle \frac{3c}{2}} \varphi^2,
\]
we see that either $\varphi\equiv 0$ or else
\[
{\textstyle \frac{1}{2}}\|\varphi\|_{L^\infty}^2-{\textstyle \frac{3c}{2}}\|\varphi\|_{L^\infty}-(1+c^2)\leq 0.
\]
In either case, it follows that
\[
\|\varphi\|_{L^\infty}\leq\frac{3c+\sqrt{8+c^2}}{2}\lesssim 1+c,
\]
as claimed.

To prove smoothness, assume first that \(\max_x \varphi(x) < \gamma\). Recall that \(\gamma\) is the smallest root of $N'(z)=0$, whence the inverse function theorem guarantees the existence of a smooth function \(N^{-1}\) such that
\[
N^{-1} N(\varphi) = \varphi \qquad \text{for }\quad   m_1 \leq \varphi \leq m_2 < \gamma.
\] 
Since $\mathcal{K}: C^s_*(\mathbb{S}) \to C^{s+1}_*(\mathbb{S})$ for all $s \in \RM$, the Nemytskii operator
\begin{equation}\label{eq:nemytskii}
\varphi\mapsto N^{-1} (\mathcal{K} \varphi)
\end{equation}
maps \(C^s_*(\mathbb{S})\) into \(C^{s+1}_*(\mathbb{S})\) for all $s \in \RM$. If \(\varphi \in C(\mathbb{S}) \hookrightarrow C_*^0(\mathbb{S})\) is now a given solution with \(\max_x \varphi(x) < \gamma\), it follows by induction that \(\varphi\) is in fact smooth.

Now, if \(\varphi(x) < \gamma\) in an open ball \(B_{\varepsilon_0}(x_0)\), we write \(\varphi = \varphi \psi + \varphi (1-\psi)\) for a smooth function \(\psi\) with  \(\psi(x) = 1\) for \(x \in B_{\varepsilon'}(x_0)\) for a slightly smaller \(\varepsilon' < \varepsilon\), and \(\supp(\psi) \Subset B_{\varepsilon}(x_0)\). The term \(\varphi \psi\) has the same regularity as \(\varphi|_{B_{\varepsilon}}\), globally on \(\mathbb{S}\). As what concerns the second term, we note that
\[
K_p(x-y)\varphi(y) (1-\psi(y)) = 0 \quad\text{for }\quad y \in  B_{\varepsilon'}(x_0) + 2\pi \ZM.
\]
Since \(K_p(x-y)\) is smooth for \(x - y \not\in 2\pi\ZM\), the convolution \(\mathcal{K} \varphi (1- \psi)(x)\) is smooth for \(x \in B_{\varepsilon'}(x_0)\). Taken together, if \(\varphi \in C^s_*(B_{\varepsilon})\), we have \(\mathcal{K} \varphi \psi \in C^{s+1}_*(B_{\varepsilon'})\). Since \(\varepsilon' < \varepsilon\) is arbitrary, we conclude that \(\mathcal{K} \varphi \psi \in C^{s+1}_*(B_{\varepsilon})\). Thus, if \(\sup_x \varphi(x) < \gamma\) in \(B_\varepsilon(x_0)\), we may apply the Nemytskii operator \eqref{eq:nemytskii} repeatedly to obtain smoothness of \(\varphi\) in the same set.
\end{proof}

A key ingredient in our forthcoming global bifurcation theory will be the preservation of a particular nodal
pattern for solutions of \eqref{profile} that satisfy $\varphi<\gamma$ uniformly in $x$.  This is the content
of the following technical result.

\begin{theorem}\label{thm:nodal}
Any non-constant and even solution \(\varphi \in C^1(\mathbb{S})\) of \eqref{profile} which is non-decreasing on \((-\pi,0)\) and satisfies \(\max \varphi  \leq \gamma + \frac{2}{\sqrt{3}}c\) fulfills 
\[\varphi' > 0, \: \varphi < \gamma \qquad\text{on }\quad (-\pi,0).
\]
If \(\varphi \in C^2(\mathbb{S})\), then \(\varphi < \gamma \) everywhere, with
 \[ 
 \varphi^{\prime\prime}(0) < 0, \quad \varphi^{\prime\prime}(\pm \pi) > 0,  
 \]
and \(\varphi^{\prime\prime}(\pi) -  \varphi^{\prime\prime}(0) \gtrsim \varphi(0) - \varphi(\pi)\).
\end{theorem}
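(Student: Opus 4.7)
The plan is to prove the monotonicity conclusion by a kernel-comparison argument on the once-differentiated profile equation, and then establish all the $C^2$ statements uniformly through an integration-by-parts identity applied to the twice-differentiated equation. First, differentiate the equation once: $\mathcal{K}\varphi' = N'(\varphi)\varphi'$. Since $\varphi'$ is odd, substituting $y \mapsto -y$ on $(0,\pi)$ rewrites the convolution at any $x_0 \in (-\pi, 0)$ as
\[
\mathcal{K}\varphi'(x_0) = \int_{-\pi}^0 \bigl[K_p(x_0 - y) - K_p(x_0 + y)\bigr]\varphi'(y)\,dy.
\]
A brief case analysis using strict monotonicity of $K_p$ on the half-period (Lemma~\ref{lemma:Kp}), evenness, and $2\pi$-periodicity shows the bracket is strictly positive on $(-\pi,0)^2$: it reduces to $(x_0-y)^2 < (x_0+y)^2$ when $x_0 + y \geq -\pi$, or $(x_0-y)^2 < (2\pi + x_0+y)^2$ otherwise, each of which follows from $x_0,y \in (-\pi,0)$. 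Combined with $\varphi' \geq 0$ from non-decreasingness, this forces $\mathcal{K}\varphi'(x_0) > 0$ strictly unless $\varphi$ is constant. Hence both $\varphi'(x_0) > 0$ and $N'(\varphi(x_0)) > 0$. Since $N'$ vanishes exactly at $\gamma$ and $\gamma' = \gamma + 2c/\sqrt{3}$, and the hypothesis $\max\varphi \leq \gamma'$ precludes $\varphi(x_0) \geq \gamma'$, we conclude $\varphi(x_0) < \gamma$ on $(-\pi,0)$.

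The main obstacle is the $C^2$ statement $\varphi < \gamma$ everywhere, which I plan to settle together with the sign of $\varphi''(0)$ by a single sign identity. Differentiating twice and using $\varphi'(0) = \varphi'(\pm\pi) = 0$ gives
\[
\mathcal{K}\varphi''(0) = N'(\varphi(0))\,\varphi''(0), \qquad \mathcal{K}\varphi''(\pi) = N'(\varphi(\pi))\,\varphi''(\pi).
\]
By evenness, $\mathcal{K}\varphi''(0) = 2\int_0^\pi K_p(y)\varphi''(y)\,dy$. Integrating by parts on the half-period, the boundary term at $y=\pi$ vanishes since $\varphi'(\pi)=0$, and the one at $y=0^+$ vanishes because the logarithmic blow-up of $K_p$ from Lemma~\ref{lemma:Kp}(iii) is dominated by $\varphi'(y) = O(y)$; we obtain
\[
\mathcal{K}\varphi''(0) = -2\int_0^\pi K_p'(y)\,\varphi'(y)\,dy.
\]
On $(0,\pi)$, $K_p' < 0$ by complete monotonicity and $\varphi' < 0$ by oddness, so the integrand is strictly positive and $\mathcal{K}\varphi''(0) < 0$. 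If $\varphi(0) = \gamma$, then $N'(\gamma)=0$ forces the right-hand side of the equation at $0$ to vanish, a contradiction; hence $\varphi(0) < \gamma$, after which $N'(\varphi(0)) > 0$ forces $\varphi''(0) < 0$ strictly. An analogous integration by parts against the shifted kernel $K_p(\pi - \cdot\,)$ gives $\mathcal{K}\varphi''(\pi) > 0$ and hence $\varphi''(\pi) > 0$.

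For the quantitative estimate, subtract the two second-derivative identities:
\[
\mathcal{K}\varphi''(\pi) - \mathcal{K}\varphi''(0) = 2\int_0^\pi \bigl[K_p'(\pi - y) + K_p'(y)\bigr]\varphi'(y)\,dy.
\]
The weight $|K_p'(\pi - y)| + |K_p'(y)|$ is continuous and strictly positive on $(0,\pi)$ and blows up at each endpoint (only one summand vanishes there), so it is bounded below by some $c_1 > 0$; hence the left-hand side is at least $c_1 \int_0^\pi |\varphi'|\,dy = c_1(\varphi(0) - \varphi(\pi))$. Decomposing the right-hand side as
\[
N'(\varphi(\pi))\bigl[\varphi''(\pi) - \varphi''(0)\bigr] + \bigl[N'(\varphi(\pi)) - N'(\varphi(0))\bigr]\,\varphi''(0),
\]
and observing that the second summand is non-positive—because $N'$ is decreasing on $(\varphi(\pi),\varphi(0)) \subset (-\infty,\gamma)$ while $\varphi''(0) < 0$—yields $\varphi''(\pi) - \varphi''(0) \gtrsim \varphi(0) - \varphi(\pi)$ after dividing by the bounded factor $N'(\varphi(\pi))$.
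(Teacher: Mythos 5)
Your proof is correct and follows essentially the same route as the paper's own proof. The only cosmetic differences are that you carry out the symmetrization of $\mathcal{K}\varphi'$ on $(-\pi,0)$ rather than $(0,\pi)$, you reach the identity $N'(\varphi(0))\varphi''(0)=-2\int_0^\pi K_p'(y)\varphi'(y)\,dy$ by writing $\mathcal{K}\varphi''$ as a convolution and integrating by parts (the paper differentiates the expression for $\mathcal{K}\varphi'$ under the integral sign and evaluates at $x\in\pi\mathbb{Z}$ where $\varphi'$ vanishes, which amounts to the same identity), and for the final quantitative lower bound you estimate the weight $|K_p'(y)+K_p'(\pi-y)|$ from below by its positive infimum (it is continuous, strictly positive on $(0,\pi)$, and blows up at both endpoints), whereas the paper invokes concavity of $K_p'$ (which follows from complete monotonicity) to obtain the pointwise bound $|K_p'(y)+K_p'(\pi-y)|\geq 2|K_p'(\pi/2)|$. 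Your argument is, if anything, slightly cleaner in this last step, but the logical skeleton — symmetrize the convolution against an odd factor, use positivity of the kernel difference, pass to the sign of $N'(\varphi)\varphi'$, exploit $\varphi'=0$ at $x\in\pi\mathbb{Z}$ to isolate $\varphi''$, and close with the algebraic decomposition of $N'(\varphi(\pi))\varphi''(\pi)-N'(\varphi(0))\varphi''(0)$ — is identical.
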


\begin{proof}
Since $\varphi$ is even and non-constant, we have that $\varphi'$ is odd, non-trivial, and non-positive on $(0,\pi)$.
We claim that $\mathcal{K}\varphi'(x)<0$ for all $x\in(0,\pi)$.  To see this, notice that {the evenness of the} periodic kernel $K_p$ gives
\begin{align*}
\mathcal{K}\varphi'(x)&=\int_{-\pi}^\pi K_p(x-y)\varphi'(y)dy\\
&=\int_0^\pi\left[K_p(x-y)-K_p(x+y)\right]\varphi'(y)dy.
\end{align*}
{Furthermore,} since
\[
K_p(x-y)-K_p(x+y)>0 \quad \text{for all }  \quad  x,y\in(0,\pi),
\]
so as long as $\varphi$ is non-constant, we have $\mathcal{K}\varphi'(x)<0$
for all $x\in(0,\pi)$, as claimed.  Now note that, by \eqref{eq:N_doubleroot},
\begin{align}\label{eq:profile_derivative}
{\textstyle \frac{3}{2}} (\gamma - \varphi ) ( \gamma  -\varphi + {\textstyle \frac{2}{\sqrt{3}}} c ) \varphi^\prime& = N'(\varphi)\varphi' = \mathcal{K}\varphi' < 0,
\end{align}
on \((0,\pi)\). Since by assumption \( \varphi' \leq 0\) on this interval, we first get the strict inequality \( \varphi^\prime < 0\) on \((0,\pi)\). Then \(\left[\gamma - \varphi \right] [ (\gamma  -\varphi)  + {\textstyle \frac{2}{\sqrt{3}}} c ] > 0\) on the same interval, which holds exactly when \(\varphi < \gamma\)  or \(\varphi > \gamma + \frac{2}{\sqrt{3}}c\).
The second alternative is excluded by assumption, whence we conclude that \(\varphi < \gamma\).

Now, if \(\varphi \in C^2(\mathbb{S})\), one obtains from \eqref{eq:profile_derivative} that 
\begin{equation}\label{eq:profile_twoderivatives}
\begin{aligned}
N^\prime(\varphi(x)) \varphi^{\prime\prime}(x) &=  \int_0^\pi\left[K_p^\prime(x-y)-K_p^\prime(x+y)\right]\varphi'(y) \, dy, \quad x \in \pi \ZM,
\end{aligned}
\end{equation}
where the integral is well defined for \(x \in \pi \ZM\) in view of \eqref{eq:Kp_twoparts} and the fact that {\(\varphi^\prime(x+z) = \bigO(z)\) for \(x \in \pi\ZM\), \(|z| \ll 1\)}. When $x=0$, we get
\begin{align*}
N^\prime(\varphi(0)) \varphi^{\prime\prime}(0) &=  -2\int_0^\pi K_p^\prime(y) \varphi'(y) \, dy < 0, 
\end{align*}
and for \(x = \pi\),
\begin{align*}
N^\prime(\varphi(\pi)) \varphi^{\prime\prime}(\pi) &=  -2\int_0^\pi K_p^\prime(\pi  + y) \varphi'(y) \, dy > 0, 
\end{align*}
yielding \(\varphi^{\prime\prime}(0) < 0\) and \(\varphi^{\prime\prime}(\pm\pi) > 0\) (the strict inequality also yields that \(\varphi < \gamma\) everywhere). Furthermore,
\begin{align*}
&N^\prime(\varphi(\pi)) (\varphi^{\prime\prime}(\pi)- \varphi^{\prime\prime}(0)) + \varphi^{\prime\prime}(0) (N^\prime(\varphi(\pi)) - N^\prime(\varphi(0))\\ 
&=  2\int_0^\pi  (K_p^\prime(y) -K_p^\prime(\pi  + y)) \varphi'(y) \, dy\\
& \gtrsim K_{p}^\prime(-\textstyle{\frac{\pi}{2}}) (\varphi(0) - \varphi(\pi)), 
\end{align*}
by the concavity of \(K_p\). Since \(N^\prime(\varphi(\pi))\) is positive and bounded, \(\varphi^{\prime\prime}(0) < 0\) and \(N^\prime(\varphi(\pi)) - N^\prime(\varphi(0) > 0\), the estimate \(\varphi^{\prime\prime}(\pi) -  \varphi^{\prime\prime}(0) \gtrsim \varphi(0) - \varphi(\pi)\) follows.
\end{proof}

By the above result, all even,  $2\pi$-periodic smooth
solutions that are nondecreasing on $(-\pi,0)$
with $\varphi\leq \gamma$ on $\mathbb{R}$ 
are smooth on $(0,\pi)$ and are strictly decreasing on {the same interval} with $\varphi(0)<\gamma$.  
In the next result, we allow for the possibility
that $\varphi(0)=\gamma$ and study the behavior of such a solution near $x=0$.

\begin{lemma}\label{lemma:lower_blowup improved}
Let $\varphi$ be an even, non-constant, $2\pi$-periodic solution of \eqref{profile} 
such that $\varphi$ is nondecreasing
on $(-\pi,0)$ with $\varphi\leq\gamma$ on $(-\pi,\pi)$.  Then $\varphi$  is smooth 
and strictly increasing on $(-\pi,0)$, and as $x\to 0$ we have
\begin{equation}\label{eq:lower_blowup}
\gamma-\varphi(x)\gtrsim\frac{|x\log|x||}{1+c}.
\end{equation}
\end{lemma}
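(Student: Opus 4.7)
The plan is in two parts: first, establish smoothness and strict monotonicity on $(-\pi, 0)$ via a contradiction argument; second, derive the logarithmic lower bound using the profile equation together with the singular structure of the kernel.

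\textbf{Smoothness and strict monotonicity.} Suppose, for contradiction, that $\varphi(x_0) = \gamma$ for some $x_0 \in (-\pi, 0)$. Nondecreasingness on $(-\pi, 0)$ and $\varphi \leq \gamma$ force $\varphi \equiv \gamma$ on $[x_0, 0]$, and by evenness on $I := [x_0, -x_0]$. Since $\varphi$ is continuous and piecewise monotone, hence BV with no point masses, its distributional derivative $\varphi'$ is an $L^1$ function, vanishing on $I^\circ$ and locally $L^\infty$ away from $\{\pm x_0\}$. Differentiating the profile equation gives $\mathcal{K}\varphi' = N'(\varphi) \varphi' \equiv 0$ on $I^\circ$, since $N'(\gamma) = 0$. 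On the other hand, evenness of $K_p$ and oddness of $\varphi'$ yield
\[
\mathcal{K}\varphi'(x) = \int_0^\pi \bigl[K_p(x-y) - K_p(x+y)\bigr] \varphi'(y) \, dy,
\]
and combining $K_p(x-y) > K_p(x+y)$ for $x, y \in (0, \pi)$ (strict monotonicity of $K_p$, Lemma~\ref{lemma:Kp}) with $\varphi' \leq 0$ non-trivial on $(0, \pi)$ shows $\mathcal{K}\varphi'(x) < 0$ strictly for $x \in (0, \pi)$. Since $\varphi' \in L^\infty_{\mathrm{loc}}$ makes $\mathcal{K}\varphi'$ continuous on $(0, -x_0)$, this contradicts the distributional vanishing on $I^\circ$. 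Hence $\varphi < \gamma$ on $(-\pi, 0)$, smoothness follows from Lemma~\ref{lemma:uniform_c-bound}, and the same kernel identity with $N'(\varphi) > 0$ on $\{\varphi < \gamma\}$ gives $\varphi' > 0$ strictly on $(-\pi, 0)$.

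\textbf{Logarithmic lower bound.} If $\varphi(0) < \gamma$ the claim is trivial, so assume $\varphi(0) = \gamma$ and set $\psi := \gamma - \varphi \geq 0$, with $\psi(0) = 0$ and $\psi$ even and nondecreasing on $(0, \pi)$. Using $\mathcal{K}\gamma = \gamma$ and \eqref{eq:N_doubleroot}, the profile equation yields
\[
\mathcal{K}\psi(x) - \mathcal{K}\psi(0) = \tfrac{1}{2}\psi(x)^2\bigl(\sqrt{3}\,c + \psi(x)\bigr) \leq C(1+c)\psi(x)^2,
\]
which is the upper bound. The target lower bound is
\[
\mathcal{K}\psi(x) - \mathcal{K}\psi(0) \gtrsim |x \log|x||\, \psi(x)
\]
for small $x > 0$; combined with the upper bound and division by $\psi(x) > 0$, it yields $\psi(x) \gtrsim |x\log|x||/(1+c)$. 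To produce it I use evenness to write
\[
\mathcal{K}\psi(x) - \mathcal{K}\psi(0) = \int_0^\pi \bigl[K_p(x-y) + K_p(x+y) - 2K_p(y)\bigr] \psi(y) \, dy,
\]
and the canonical decomposition $K_p(z) = -2\log|\pi z/4| + K_{p,\mathrm{reg}}(z)$ (Lemma~\ref{lemma:Kp}), which reduces the bracket to $-2\log|1 - x^2/y^2|$ plus smooth corrections. This bracket is negative for $y < x/\sqrt{2}$ and positive thereafter. Splitting the integration at the sign-change and using monotonicity of $\psi$ (bounding $\psi(y) \leq \psi(x/\sqrt{2})$ on the negative region and $\psi(y) \geq \psi(x)$ for $y > x$), together with explicit computation of the primitive $\int_0^y K_p \sim -2 y \log y$ near $0$, extracts the desired $|x\log|x||\, \psi(x)$ lower bound.

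\textbf{Main obstacle.} The crux is this lower estimate: the leading-order contributions from the two sign regions of the bracket cancel up to linear-in-$x$ remainders, and the $|\log|x||$ enhancement over the naive linear bound emerges only through the logarithmic growth of $\int_0^y K_p$ near $0$---equivalently, the $\log|z|$ singularity of $K_p$ recorded in Lemma~\ref{lemma:Kp}---combined with the strict monotonic growth of $\psi$ across dyadic scales approaching the crest. This delicate cancellation is the essential novelty distinguishing the logarithmic regime here from the power-type singular kernels treated in \cite{EWhighest}.
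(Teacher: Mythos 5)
Your proof has a genuine gap in the second part, and the route you chose does not lead to the stated estimate.

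\textbf{The lower bound.} Your plan is to prove $\mathcal{K}\psi(x)-\mathcal{K}\psi(0)\gtrsim|x\log|x||\,\psi(x)$ directly from the second-difference representation
\[
\mathcal{K}\psi(x)-\mathcal{K}\psi(0)=\int_0^\pi\bigl[K_p(x-y)+K_p(x+y)-2K_p(y)\bigr]\psi(y)\,dy,
\]
using only that $\psi=\gamma-\varphi$ is nondecreasing with $\psi(0)=0$ and the log singularity of $K_p$. This cannot work, for a structural reason: the bracket integrates to exactly zero over $(0,\pi)$ (since $\int_0^\pi K_p(x\pm y)\,dy=\int_0^\pi K_p(y)\,dy=\tfrac12$ by evenness, periodicity, and $\|K_p\|_{L^1}=1$). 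With the monotonicity bounds you propose ($\psi(y)\leq\psi(x/\sqrt 2)$ on the negative region, $\psi(y)\geq\psi(x)$ for $y>x$), the positive and negative masses of the bracket cancel at leading order, and the best one extracts is $\mathcal{K}\psi(x)-\mathcal{K}\psi(0)\gtrsim A_3\bigl(\psi(x)-\psi(x/\sqrt2)\bigr)$ where $A_3\eqsim x$ — which may be zero, and in any case carries no $|\log x|$ factor. Worse, the claimed intermediate estimate is in fact false for general nondecreasing $\psi$: take $\psi(y)=y^{1/2}$. Then scaling $y=xs$ gives $\mathcal{K}\psi(x)-\mathcal{K}\psi(0)\eqsim x^{3/2}$, while $|x\log x|\,\psi(x)\eqsim x^{3/2}|\log x|$, so the target inequality fails as $x\to0$. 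Since your argument uses only monotonicity and the log singularity, it would prove a false statement, so there must be a missing ingredient. The paper proceeds entirely differently: it works with the \emph{derivative} equation $\mathcal{K}\varphi'=N'(\varphi)\varphi'$, picks $\xi\in(x,x/2)$ minimizing $\varphi'$ on $[x,x/2]$, and uses $N'(\varphi(x))\geq N'(\varphi(\xi))$ together with $N'(\varphi(\xi))\varphi'(\xi)=\mathcal{K}\varphi'(\xi)\geq\varphi'(\xi)\int_x^{x/2}[K_p(\xi-y)-K_p(\xi+y)]\,dy$ to pull out the factor $\varphi'(\xi)$. The \emph{first}-difference kernel $K_p(\xi-y)-K_p(\xi+y)$, integrated over an interval of length $\eqsim|x|$ containing the singular point $y=\xi$, produces the $|x\log|x||$ factor; there is no zero-mass cancellation to defeat, because this integral is not constrained to vanish.

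\textbf{The first part.} Your contradiction argument for strict monotonicity requires justifying that $\varphi'$ is a function (you assert BV with no point masses hence $\varphi'\in L^1$, but a continuous BV function can have a singular derivative) and that $\varphi'\in L^\infty_{\mathrm{loc}}$ away from $\{\pm x_0\}$, which presupposes smoothness outside $I=[x_0,-x_0]$ — but you only know $\varphi\leq\gamma$ there, not $\varphi<\gamma$, so Lemma~\ref{lemma:uniform_c-bound} does not yet apply. The paper avoids all of this by working with second differences of $\mathcal{K}\varphi$ (which is continuous for merely continuous $\varphi$, since $K_p\in L^1$), showing $\mathcal{K}\varphi(x+h)>\mathcal{K}\varphi(x-h)$ for $x\in(-\pi,0)$, $h\in(0,\pi)$, and then using the algebraic factorization of $\mathcal{K}\varphi(x)-\mathcal{K}\varphi(y)$ in \eqref{eq:difference expansion}, whose second factor is nonnegative when $\varphi\leq\gamma$ with equality only at $\varphi=\gamma$, to transfer strict monotonicity of $\mathcal{K}\varphi$ to $\varphi$ directly.
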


\begin{proof}
First, note by Lemma \ref{lemma:uniform_c-bound} and Theorem \ref{thm:nodal} that if 
$\varphi(0)<\gamma$ then $\varphi\in C^\infty(\RM)$ and is strictly increasing on $(-\pi,0)$.
In the case when $\varphi(0)=\gamma$, however, $\varphi$ may not be $C^1$ and hence we cannot
establish smoothness nor strict monotonicity as above.  Nevertheless, we now prove that,
just as in Theorem~\ref{thm:nodal}, one has $\varphi'(x)>0$ for all $x\in(-\pi,0)$ 
even when \(\varphi\) is merely assumed to be continuous.
This is a technical variation of the argument used in the proof of Theorem~\ref{thm:nodal}, which starts with the observation that
\begin{equation}\label{eq:double sym}
\begin{aligned}
&{\mathcal K}\varphi(x+h) - {\mathcal K}\varphi(x-h)\\ 
&\quad = \int_{-\pi}^0 \left[ K_p(y-x) - K_p(y+x) \right]  \left[ \varphi(y+h) - \varphi(y-h) \right] \, dy,
\end{aligned}
\end{equation}
in view of evenness and periodicity of both \(\varphi\) and \(K_p\). For \(x \in (-\pi,0)\) and \(h \in (0,\pi)\), both factors in the integrand are non-negative, and since \(\varphi\) is assumed to be non-constant, we conclude that \({\mathcal K}\varphi(x+h) > {\mathcal K}\varphi(x-h)\) whenever \(x,h\) are chosen as above. From \eqref{eq:profile'} we have
\begin{equation}\label{eq:difference expansion}
\begin{aligned}
& {\mathcal K}\varphi(x) - {\mathcal K}\varphi(y)\\ 
&= ( \varphi(x) - \varphi(y) ) \left( c^2 - {\textstyle \frac{3c}{2}} (\varphi(x) + \varphi(y)) + {\textstyle \frac{1}{2}} \left( (\varphi(x))^2 + \varphi(x) \varphi(y) + (\varphi(y))^2 \right) \right).
\end{aligned}
\end{equation}
By letting \(\tilde \varphi = c \varphi\), one sees that, up to a factor of \(c^2/2\), the long expression on the right-hand side is non-negative because
\[
2 - 3 (\tilde \varphi(x) + \tilde \varphi(y)) + (\tilde\varphi(x))^2 + \tilde \varphi(x) \tilde \varphi(y) + (\tilde \varphi(y))^2 \geq 0
\] 
for \(\tilde \varphi \leq 1 - 1/\sqrt{3}\), with equality only when \( \tilde \varphi(x) = \tilde \varphi(y) = 1- 1/\sqrt{3}\) (recall here that \(\varphi \leq \gamma\)). Since we already proved that \(\mathcal{K}\varphi\) is strictly increasing on \(-\pi,0)\), the assumption that \(\varphi\) is nondecreasing together with  \eqref{eq:difference expansion} show that \(\varphi\) is indeed strictly increasing on \((-\pi,0\)) (hence, \(\varphi(x) = \gamma\) is excluded except at \(x \in 2\pi \ZM\)).  Consequently, $\varphi$ is smooth on $(-\pi,0)$ by Lemma~\ref{lemma:uniform_c-bound}, and to conclude that \(\varphi\) also has a strictly positive derivative in the left half-period, one may apply Fatou's lemma to \eqref{eq:double sym}. Via \eqref{eq:difference expansion} this shows that \(\varphi'(x) > 0\) for \(x \in (-\pi,0)\). 
 
To establish the lower bound \eqref{eq:lower_blowup}, observe that by Lemma \ref{lemma:uniform_c-bound} we have
\begin{equation}\label{eq:Nest2}
(1+c)(\gamma-\varphi(x))\gtrsim\frac{3}{2}(\gamma-\varphi(x))(\gamma-\varphi(x)+\frac{2}{\sqrt{3}}c)=N'(\varphi(x))
\end{equation}
so that it is sufficient to study the behavior of $N'(\varphi(x))$ for $|x|\ll 1$.
From the above, for each $x\in[-\pi,0)$ there exists a $\xi\in(x,x/2)$ such that
\[
\varphi'(\xi)=\min_{y\in[x,x/2]}\varphi'(y).
\]
Since $N'(\varphi)$ is strictly decreasing in $\varphi$ for all $\varphi<\gamma$, it follows
from the monotonicity of $\varphi$ that $N'(\varphi(x))\geq N'(\varphi(\xi))$ for all $x\in[-\pi,0)$.
From \eqref{profile} and the fact that $\varphi'(x)>0$ for $x\in(-\pi,0)$ {we then see that}
\begin{align*}
N'(\varphi(x))\varphi'(\xi)&\geq N'(\varphi(\xi))\varphi'(\xi)\\
&=\mathcal{K}\varphi'(\xi)\\
&=\int_{-\pi}^0\left[K_p(\xi-y)-K_p(\xi+y)\right]\varphi'(y) \, dy\\
&\geq\varphi'(\xi)\int_{x}^{\frac{x}{2}}\left[K_p(\xi-y)-K_p(\xi+y)\right]\, dy
\end{align*}
where the last inequality follows {by} $K_p(\xi-y)-K_p(\xi+y)>0$ for $\xi,y\in(-\pi,0)$,
and {by}  the definition of $\xi$.
Lemma \ref{lemma:Kp}(iii) now immediately provides the estimate
\begin{align*}
N'(\varphi(x))&\geq  \int_{x}^{\frac{x}{2}} \left[K_p(\xi-y)-K_p(\xi+y)\right]\,dy\\
&\geq \max \left(\int_{x}^{\xi}, \int_{\xi}^{\frac{x}{2}} \right)   
\left[K_p(\xi-y)-K_p(\xi+y)\right]\,dy\\
&\geq 2\int_I \left[\log|  \pi(\xi+y)/4|-\log|\pi(\xi-y)/4|\right]\,dy - \bigO(x^3).
\end{align*}
{Here we take} $I=[\xi,x/2]$ when $\xi\leq\frac{3}{4}x$ and $I=[x,\xi]$ when $\xi>\frac{3}{4}x$.
It follows that
\begin{equation}\label{eq:Nest}
N'(\varphi(x))\gtrsim  |(\xi - z) \log |\pi (\xi - z)| | - |(\xi + z) \log |\pi(\xi+z)| | - \bigO(x)
\end{equation}
for all $x\in(-\pi,0)$, {where} $z=x$ if $\xi>\frac{3}{4}x$ and $z=\frac{x}{2}$ if $\xi\leq\frac{3}{4}x$.
Considering \(|x| \ll 1\) small enough for \(|x \log | \pi x||\) to be monotone in \(x\),
we find that for such $x$ and $z$ we have
\begin{equation}\label{eq:improved xlog estimate 2}
|(\xi - z) \log |\pi (\xi - z)| | - |(\xi + z) \log |\pi(\xi+z)| | \geq |{\textstyle \frac{x}{4}} \log |\pi {\textstyle \frac{x}{4}}| - |x \log |\pi x| |,
\end{equation}
and we may further estimate
\begin{equation}\label{eq:improved xlog estimate 3}
\begin{aligned}
|{\textstyle \frac{x}{4}} \log | \pi {\textstyle \frac{x}{4}}| - |x \log |\pi x| | &= \left| \frac{x}{4}  \log\left| \frac{\pi x/4}{(\pi x)^4} \right| \right|\\
&= \left| \frac{3x}{4}  \log\left| 4^{1/3} \pi x \right| \right| \geq \frac{3}{4} |x \log |x|| - \bigO(x).
\end{aligned}
\end{equation}
Combining 
{\eqref{eq:Nest}, \eqref{eq:improved xlog estimate 2} and \eqref{eq:improved xlog estimate 3}}, the result follows immediately by \eqref{eq:Nest2}.
\end{proof}

The estimate \eqref{eq:lower_blowup} obviously holds for any solution \(\varphi \in C(\mathbb{S})\) than can be approximated in \(C(\mathbb{S})\) by a sequence of solutions satisfying the assumptions of the lemma. In particular, if \(\varphi(0) =\gamma\) for such a solution, Lemma~\ref{lemma:lower_blowup improved} implies that the solution cannot be continuously differentiable, or even Lipschitz continuous, at $x=0$.  The next result
explores the global regularity of such a wave, as well as the singularity at $x=0$ {(we so far only have a lower bound on \(\gamma - \varphi(x)\))}.

\begin{theorem}\label{thm:reg}
Let $\varphi$ be an even, $2\pi$-periodic solution of \eqref{profile} that is nondecreasing on $(-\pi,0)$
with $\varphi(0)=\gamma$.  Then:
\begin{itemize}
\item[(i)] $\varphi$ is smooth and strictly increasing on $(-\pi,0)$.
\item[(ii)] $\varphi\in C^\alpha(\mathbb{S})$ for all $\alpha\in(0,1)$, and the \(C^\alpha\)-estimates are uniform in \(\alpha\) over any compact subset of \((0,1)\), and uniform in \(\varphi\) for wavespeeds \(c\) contained in any compact subset of \((0,\infty)\).
\item[(iii)]  The estimate
\begin{equation}\label{eq:reg}
\gamma-\varphi(x)\eqsim \left|x\log|x|\right|,
\end{equation}
holds for all $|x|\ll 1$.
\end{itemize}
\end{theorem}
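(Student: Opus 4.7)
The plan is as follows. Part (i) is an immediate corollary of Lemma~\ref{lemma:lower_blowup improved}, which already yields smoothness and strict monotonicity of \(\varphi\) on \((-\pi,0)\) under the present hypotheses.

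For part (ii), I would run a bootstrap based on the quadratic identity
\[
(\gamma - \varphi(x))^2 \eqsim \mathcal{K}\varphi(0) - \mathcal{K}\varphi(x), \qquad x \to 0,
\]
which follows from the factorization \eqref{eq:N_doubleroot} of \(N\) (whose first derivative vanishes at \(\gamma\)) together with \(N(\varphi) = \mathcal{K}\varphi\). Starting from \(\varphi \in C(\mathbb{S}) \hookrightarrow C^0_\ast(\mathbb{S})\), the mapping property \(\mathcal{K}: C^0_\ast \to C^1_\ast\) combined with the evenness of \(\mathcal{K}\varphi\) (inherited from that of \(\varphi\)) gives \(|\mathcal{K}\varphi(x) - \mathcal{K}\varphi(0)| \lesssim |x|\) via the Zygmund condition evaluated at the origin, so \(\gamma - \varphi(x) \lesssim |x|^{1/2}\) near \(0\); together with interior smoothness and monotonicity this lifts to \(\varphi \in C^{1/2}(\mathbb{S})\). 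Iterating: if \(\varphi \in C^\alpha(\mathbb{S})\) with \(\alpha \in (0,1)\), then \(\mathcal{K}\varphi \in C^{1+\alpha}\), and evenness forces \((\mathcal{K}\varphi)'(0) = 0\), whence \(|\mathcal{K}\varphi(x) - \mathcal{K}\varphi(0)| \lesssim |x|^{1+\alpha}\) and \(\gamma - \varphi \lesssim |x|^{(1+\alpha)/2}\). The recursion \(\alpha \mapsto (1+\alpha)/2\) converges to \(1\), establishing \(\varphi \in C^\alpha(\mathbb{S})\) for every \(\alpha \in (0,1)\).

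For part (iii) the lower bound is Lemma~\ref{lemma:lower_blowup improved}. For the upper bound I would analyze the convolution directly, writing
\[
\mathcal{K}\varphi(0) - \mathcal{K}\varphi(x) = \int_{-\pi}^{\pi} [K_p(x - y) - K_p(y)](\varphi(y) - \gamma)\, dy,
\]
where subtracting the constant \(\gamma\) is legitimate because \(\int_{-\pi}^\pi [K_p(x-y) - K_p(y)]\, dy = 0\). Splitting at \(|y| = 2|x|\): on the outer region the Taylor expansion
\[
K_p(x - y) - K_p(y) = x K_p'(y) + \tfrac{x^2}{2} K_p''(y) + \bigO\!\bigl(x^3/y^3\bigr)
\]
has its odd leading term killed by the even symmetry of \(\gamma - \varphi\) integrated over the symmetric outer domain, leaving a quadratic-order contribution controlled by \(|K_p''(y)| \lesssim 1/y^2\) and the Hölder bound from (ii). On the inner region the logarithmic singularity of \(K_p\) combined with the Hölder bound on \(\gamma - \varphi\) contributes at the same order. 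Together they yield \(\mathcal{K}\varphi(0) - \mathcal{K}\varphi(x) \lesssim |x|^{2 - \epsilon}|\log|x||\) for every \(\epsilon > 0\), which is just short of the sharp bound required.

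The main obstacle, and what I expect to be the subtlest part of the proof, is closing this logarithmic gap to reach \(\gamma - \varphi \lesssim |x \log |x||\). I would upgrade the above analysis by feeding pointwise bounds of the form \(\gamma - \varphi(y) \lesssim |y|^\beta |\log|y||^\mu\) back into both the inner and outer estimates; careful bookkeeping produces the recursion \((\beta, \mu) \mapsto ((1+\beta)/2, (\mu+1)/2)\), whose attracting fixed point \((1,1)\) matches the conjectured asymptotic \(\gamma - \varphi \sim |x\log|x||\). Implementing this rigorously is delicate because the constant in the outer estimate blows up as \(\beta \nearrow 1\), so the limit should be reached by a comparison/barrier argument — verifying by induction that, for a sufficiently large \(C^\ast\), the bound \(\gamma - \varphi(y) \leq C^\ast |y\log|y||\) is reproduced under one step of the integral estimate — rather than by brute-force iteration of exponents alone.
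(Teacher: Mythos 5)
Part (i) is handled the same way as the paper, by deferring to Lemma~\ref{lemma:lower_blowup improved}.

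Part (ii) contains a genuine gap. Your bootstrap establishes only the pointwise decay $\gamma-\varphi(x)\lesssim |x|^{(1+\alpha)/2}$ at the crest, but then asserts that ``together with interior smoothness and monotonicity this lifts to $\varphi\in C^{(1+\alpha)/2}(\mathbb{S})$.'' That step does not follow: to iterate the mapping property $\mathcal{K}:C^\alpha_*\to C^{1+\alpha}_*$ you need $\varphi\in C^\alpha(\mathbb{S})$, i.e.\ a bound on $|\varphi(x)-\varphi(y)|/|x-y|^\alpha$ for \emph{all} pairs, including $0<x<y$ with $|x-y|\leq x$ (both points close to the singularity and close to each other). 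There, $\varphi(x)-\varphi(y)\leq\gamma-\varphi(y)\lesssim y^{(1+\alpha)/2}$ is useless because $y^{(1+\alpha)/2}\not\lesssim|x-y|^{(1+\alpha)/2}$, and the interior smoothness gives no uniform control of $\varphi'$ as $x\to0$. One can construct increasing $g$ with $g(0)=0$, $g(x)\lesssim\sqrt x$, $g$ smooth on $(0,\pi)$, and yet $g\notin C^{1/2}$, so your hypotheses are insufficient. The paper addresses exactly this regime with the pair of lower bounds \eqref{eq:keyestimate1}--\eqref{eq:keyestimate2} on $\mathcal{K}\varphi(x)-\mathcal{K}\varphi(y)$ (the first, $\gtrsim(\varphi(x)-\varphi(y))(\gamma-\varphi(x))$, is absent from your sketch), which combined with the Zygmund bound \eqref{eq:keyestimate3} and the a priori lower bound $\gamma-\varphi(x)\gtrsim x$ yields \eqref{eq:xh-estimate}, and then an interpolation closes the case $|x-y|\leq x$. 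Without that second estimate the induction cannot start, let alone iterate.

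For part (iii) you propose a genuinely different route: a Taylor expansion of $K_p(x-y)-K_p(y)$ on the outer region $|y|>2|x|$, killing the linear term by the odd/even cancellation, and handling the inner region by the integrability of the logarithm. This is a reasonable singular-integral idea and the cancellation you invoke is correct, but as you note it lands at $|x|^{2-\epsilon}|\log|x||$ and you leave the closing of the logarithmic gap to a sketched recursion $(\beta,\mu)\mapsto((1+\beta)/2,(1+\mu)/2)$ with an unverified barrier argument. The paper avoids this convergence issue entirely: it uses the explicit decomposition $K_p=-2\log|\pi x/4|+K_{p,\mathrm{reg}}$, rescales $y=xs$ in the singular part, and shows the quantity $c_\alpha=\sup_y(\gamma-\varphi(y))/\bigl(|y|^\alpha(1+|\log|y||)\bigr)$ satisfies the self-improving bound $c_\alpha^2\lesssim\max(c_\alpha,1)$ uniformly in $\alpha\in(0,1)$, so $c_\alpha\lesssim1$ and one simply lets $\alpha\nearrow1$. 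That argument has no exponent drift to absorb, whereas yours would require careful control of the implied constants as $(\beta,\mu)\to(1,1)$. Your route is not obviously wrong, but it is not complete as stated.
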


\begin{proof}
Part (i) and the lower bound in \eqref{eq:reg} have already been established in 
 Lemma~\ref{lemma:lower_blowup improved}.  It thus remains to {prove} the global regularity
result in (ii) and the upper bound in (iii).

To establish (ii), let \(0 \leq x < y \leq \pi\) and note that {by} Taylor's theorem,
\[
N(\varphi(x)) - N(\varphi(y))
= (\varphi(x) - \varphi(y)) N^\prime(\varphi(x)) - {\textstyle \frac{1}{2}} (\varphi(x) - 
\varphi(y))^2 N^{\prime\prime}(\varphi(\xi_1)),
\]
for some $\xi_1\in(x,y)$.
Further, using $N'(\gamma)=0$, the {mean value theorem} implies
\[
N'(\varphi(x)) =N''(\varphi(\xi_2)(\varphi(x)-\gamma)
\]
for some $\xi_2\in(0,x)$, so that
\begin{align*}
N(\varphi(x))-N(\varphi(y))&= -(\varphi(x) - \varphi(y)) (\gamma - \varphi(x)) N^{\prime\prime}(\varphi(\xi_2))\\ 
&\qquad- {\textstyle \frac{1}{2}} (\varphi(x) - \varphi(y))^2 N^{\prime\prime}(\varphi(\xi_1)).
\end{align*}
Now note that
\[
N''(\varphi(\xi)) \leq - \sqrt{3} c \quad\text{for all }\quad \xi \in [0,\pi],
\]
in view of that \(N''(\gamma) = - \sqrt{3} c\) and that, for such $\xi$, we have 
\(\frac{d}{d\xi}N''(\varphi(\xi)) < 0\) and \(\varphi(\xi) \leq \gamma\). In particular,
\[
N(\varphi(x))-N(\varphi(y))\gtrsim (\varphi(x)-\varphi(y))(\gamma-\varphi(x))+{\textstyle \frac{1}{2}}(\varphi(x)-\varphi(y))^2,
\]
which holds uniformly  for all solution pairs \((\varphi, c)\) with \(c \gtrsim 1\).
Since $\varphi$ is monotone decreasing on $(0,\pi)$, it follows from \eqref{profile} that the above estimate
yields
\begin{equation}\label{eq:keyestimate1}
{\mathcal{K}\varphi(x)-\mathcal{K}\varphi(y)} \gtrsim(\varphi(x)-\varphi(y))(\gamma-\varphi(x))
\end{equation}
and
\begin{equation}\label{eq:keyestimate2}
\mathcal{K}\varphi(x)-\mathcal{K}\varphi(y)\gtrsim (\varphi(x)-\varphi(y))^2,
\end{equation}
uniformly for \(c \gtrsim 1\). Now, recall that if $\varphi\in C^s_*(\mathbb{S})$ for some $s\geq 0$, 
then $\mathcal{K}(\varphi)\in C^{s+1}_*(\mathbb{S})$.  From \eqref{eq:keyestimate2} and the continuity
of the embedding $ C^1_*(\mathbb{S}) \hookrightarrow C^\alpha(\mathbb{S})$ for all $\alpha\in[0,1)$, it {is immediate} that any solution $\varphi\in C^0(\mathbb{S})$ belongs to $C^{1/2-}(\mathbb{S})$.  

To show that $ \varphi$ has better regularity than $C^{1/2-}(\mathbb{S})$ we observe that for any
\(f \in C^{1+\alpha}(\mathbb{S})\) with $\alpha\in(0,1)$ and \(f^\prime(0) = 0\), one has the estimate
\begin{align*}
|f(x) - f(y)| &=  |x-y| | f^\prime(\xi) - f^\prime(0)| \lesssim |x-y| |\xi|^\alpha,
\end{align*}
valid for {some} \(|\xi| \in (x,y)\).  Applying this estimate to the function $\mathcal{K}  \varphi$, it follows that 
if ${\varphi} \in C^\alpha(\mathbb{S})$ for some $\alpha\in(0,1)$, then for $0\leq x< y\leq\pi$ we have
\begin{equation}\label{eq:keyestimate3}
\mathcal{K} \varphi(x) - \mathcal{K} \varphi(y) \lesssim |x-y| y^\alpha.
\end{equation}
Whenever {$\varphi \in C^\alpha(\mathbb{S})$, \(\alpha \in (0,1)\), the} estimates \eqref{eq:keyestimate2}, \eqref{eq:keyestimate3} 
and the triangle inequality together yield
{
\begin{equation}\label{eq:peak-estimate}
\varphi(x)-\varphi(y)\lesssim |x-y|^{\frac{1+\alpha}{2}}
\end{equation}
valid uniformly for all $0\leq x<y\leq\pi$ with $x<|x-y|$, and all solutions \(\varphi\) with \(c \gtrsim 1\)}.
In particular, taking $x=0$ above implies
\begin{equation}\label{eq:top-estimate}
\gamma - \varphi(y)  \lesssim |y|^{\frac{1+\alpha}{2}},
\end{equation}
for all \(y \in \mathbb{S}\), when \(\varphi \in C^{\alpha}(\mathbb{S})\) {for \(\alpha \in (0,1)\).}
When, on the other hand, $|x-y|\leq x$ we have from \eqref{eq:keyestimate1}, \eqref{eq:keyestimate3} and
the triangle inequality that
\[
 \left(\varphi(x) - \varphi(y) \right) \left(\gamma - \varphi(x) \right) \lesssim |x-y| x^\alpha.
\]
Since \( \gamma - \varphi(x) \gtrsim x/(1+c)\) by Lemma \ref{lemma:lower_blowup improved}, it follows that
\begin{equation}\label{eq:xh-estimate}
\varphi(x) - \varphi(y)  \lesssim \frac{|x-y|}{x^{1-\alpha}},
\end{equation}
whenever \(\varphi \in C^\alpha(\mathbb{S})\) for some $\alpha\in(0,1)$, {and \(c \eqsim 1\).}

We now interpolate between \eqref{eq:top-estimate} and~\eqref{eq:xh-estimate}, still for $|x-y|\leq x$. 
Namely, {using that \(y < 2x\),} for a given $\beta\in(0,1)$ we estimate
\begin{align*}
\frac{\varphi(x) - \varphi(y)}{|x-y|^\beta} &\leq \frac{(\varphi(x) - \varphi(y))^{\beta}}{|x-y|^\beta} 
(\gamma - \varphi({y}))^{1-\beta}\\ 
&\lesssim x^{(\alpha - 1)\beta + \frac{(1+\alpha)(1-\beta)}{2}},
\end{align*}
which is bounded for all $0\leq x<y\leq\pi$ provided that \(\beta \leq \frac{1+\alpha}{3-\alpha}\).  
In particular, taking 
$\beta=\frac{1+\alpha}{3-\alpha}$ above we have the estimate
\[
{\varphi}(x)-{\varphi}(y)\lesssim|x-y|^{(1+\alpha)/(3-\alpha)}
\]
when $|x-y|\leq x$, valid {uniformly for all solutions $\varphi \in C^\alpha(\mathbb{S})$ for which \(c \eqsim 1\). Here, $\alpha\in(0,1)$ is still considered fixed.}  Combining with
\eqref{eq:peak-estimate} and noting that $2<3-\alpha$, we have established the estimate
\[
{\varphi}(x)-{\varphi}(y)\lesssim|x-y|^{(1+\alpha)/2}\quad\text{for all}\quad 0\leq x<y\leq\pi
\]
whenever ${\varphi}\in C^{\alpha}(\mathbb{S})$ with $\alpha\in(0,1)$.  It follows
that if ${\varphi}\in C^\alpha(\mathbb{S})$ is a solution of \eqref{profile} for some $\alpha\in(0,1)$,
then ${\varphi}\in C^{(1+\alpha)/2}(\mathbb{S})$.  Fixing $\alpha_0\in(0,1/2)$, {we may} define the recurrence relation
\[
a_0=\alpha_0,\quad a_{n+1}=\frac{1+a_n}{2},~~n\geq 0,
\]
{yielding that $\varphi\in C^{a_n}(\mathbb{S})$} for all $n\in\mathbb{N}$.  Since the sequence $\{a_n\}_{n=1}^\infty$
is clearly strictly increasing with $a_n\nearrow 1$, {$\varphi$ belongs to $C^{1-}(\mathbb{S})$,}
as claimed. {The \(C^\alpha\)-estimates are furthermore uniform for all \(\alpha\) in any compact subinterval of \((0,1)\), and for all solution pairs \((\varphi,c)\) with \(c\) in a compact subinterval of \((0,\infty)\).}

It remains to establish the upper bound in \eqref{eq:reg}. {This is the most technical part of the paper.}
Observe that since $\varphi\in C^\alpha(\mathbb{S})$ for all $\alpha\in(0,1)$, one has
\begin{equation}\label{eq:calpha}
c_\alpha := \sup_{y \in \mathbb{S}} \frac{\gamma  - \varphi(y)}{|y|^\alpha (1 + |\log|y||)} < \infty
\quad \text{ for all } \quad \alpha\in(0,1),
\end{equation}
and our goal is to show that one may let $ \alpha \nearrow 1$ to obtain the desired bound 
$\gamma-\varphi(x)\lesssim \left|x\log|x|\right|$ for all $x$ sufficiently small.
To this end, let $0<\delta\ll 1$ and 
note for all $x\in(0,\delta)$ we have from \eqref{eq:keyestimate2} {that}
\begin{align*}
\left(\gamma-\varphi(x)\right)^2&\lesssim
\mathcal{K}\varphi(0)-\mathcal{K}\varphi(x)\\
&=\int_{-\pi}^\pi\left(K_p(y)-K_p(x-y)\right)\varphi(y)dy\\
&=\int_{-\pi}^\pi\left(K_p(x-y)-K_p(y)\right)\left(\gamma-\varphi(y)\right)dy.
\end{align*}
{Here} we have used the fact that $K_p$ is $2\pi$-periodic.  Taking $y\mapsto -y$ above and averaging
gives the representation
\begin{align*}
&\mathcal{K}\varphi(0)-\mathcal{K}\varphi(x)=
\frac{1}{2}\int_{-\pi}^\pi\left(K_p(x+y)+K_p(x-y)-2K_p(y)\right)\left(\gamma-\varphi(y)\right)dy\\
&\qquad= -2\int_{0}^{\pi} (\log\left|{\textstyle\frac{x+y}{4}}\right|
		+ \log \left|{\textstyle\frac{x-y}{4}}\right|-2 \log\left|{\textstyle\frac{y}{4}}\right|) 
			(\gamma - \varphi(y))\, dy\\
&\qquad\quad 
+\int_{0}^{\pi} (K_{p,\text{reg}}(x+y)+K_{p,\text{reg}}(x-y)-2K_{p,\text{reg}}(y)) (\gamma - \varphi(y))\, dy,
\end{align*}
where the final equality follows from Lemma \ref{lemma:uniform_c-bound}.  Since $K_{p,{\rm reg}}''$ is uniformly
bounded on $\mathbb{R}$, {the estimate}
\begin{equation}\label{eq:regular_integral}
\int_{0}^{\pi} (K_{p,\text{reg}}(x+y)+K_{p,\text{reg}}(x-y)-2K_{p,\text{reg}}(y)) (\gamma - \varphi(y))\, dy\lesssim x^2
\end{equation}
{holds.} To estimate {also} the principle part, observe {that}
\begin{equation}\label{eq:log-integral}
\begin{aligned}
&\left| \int_{0}^{\pi} (\log\left|{\textstyle\frac{x+y}{4}}\right|
		+ \log \left|{\textstyle\frac{x-y}{4}}\right|-2 \log\left|{\textstyle\frac{y}{4}}\right|)  (\gamma - \varphi(y))\, dy \right|\\
&\leq c_\alpha \int_{0}^{\pi} \left| \log\left|{\textstyle\frac{x+y}{4}}\right|
		+ \log \left|{\textstyle\frac{x-y}{4}}\right|-2 \log\left|{\textstyle\frac{y}{4}}\right|  \right| |y|^\alpha (1 + |\log|y||) \, dy.
\end{aligned}
\end{equation}
Making the change of variables \(y = xs\), we note that
\begin{equation}\label{eq:log-asymptotic}
\begin{aligned}
 &\log|{\textstyle\frac{x(1+s)}{4}}|+ \log |{\textstyle\frac{x(1-s)}{4}}|-2 \log|{\textstyle\frac{xs}{4}}|\\ 
 &=   \log|1+s|+ \log |1-s|-2 \log s \eqsim \frac{1}{s^2} \quad \text {as } s \to \infty,
\end{aligned}
\end{equation}
{whence the magnitude of this expression is independent of \(x\), and is integrable in \(s\) on \((0,\infty)\). 
The integral on the right-hand side in \eqref{eq:log-integral} can thus be further estimated as}
\begin{equation}\label{eq:log-estimate}
\begin{aligned}
&c_\alpha x^{1+\alpha} \int_{0}^{\frac{\pi}{x}} \left| \log|1+s|+ \log |1-s|-2 \log s  \right| s^\alpha (1 + |\log x + \log s|) \, ds \\
&\lesssim c_\alpha x^{1+\alpha} \left( |\log x| + |\log x| \int_{1}^{\frac{\pi}{x}} \frac{s^\alpha}{s^2} \, ds +  \int_{1}^{\frac{\pi}{x}} \frac{s^\alpha \log s}{s^2}\,ds \right)\\
&\lesssim c_\alpha x^{1+\alpha} \left(|\log x| + |\log x| \int_{1}^{\frac{\pi}{x}} \frac{ds}{s}  +  \int_{1}^{\frac{\pi}{x}} \frac{\log s}{s}\,ds \right)\\
&\lesssim c_\alpha x^{2\alpha} |\log x|^2,
\end{aligned}
\end{equation}
where we have used that \(\int_1^{\pi/x} \log(s)/s\,ds = \frac{1}{2}(\log(\pi/x))^2 \lesssim (\log(x))^2\) and that all terms {that are bounded in \(s\)} are \(\bigO(\log(x))\) for \(x \in (0,\delta]\). Specifically, the integrals are 
\(\bigO(|\log(x)|)\) on any compact interval, so it does not matter which starting point we choose for the integrals with end-point \(\frac{\pi}{x}\).  

Finally, combining \eqref{eq:regular_integral}, \eqref{eq:log-estimate} and \eqref{eq:keyestimate2} it follows
that
\begin{align*}
(\gamma - \varphi(x))^2 &\lesssim(\mathcal{K}\varphi)(0)-(\mathcal{K}\varphi)(x) \lesssim c_\alpha  (x^\alpha (1 + |\log x|))^2,
\end{align*}
where the estimates are uniform for all \((\alpha,x) \in (0,1) \times (0,\delta]\), and where \(c_\alpha\) is 
defined in \eqref{eq:calpha}.  Rearranging, the above yields the estimate
\[
\left(\frac{\gamma-\varphi(x)}{x^\alpha(1+|\log x|)}\right)^{2} \lesssim c_\alpha
\]
valid for all $x\in(0,\delta]$ and $\alpha\in(0,1)$.  For $x\in(\delta,\pi]$ we note that the left-hand side
in the above inequality is uniformly bounded for all $\alpha\in[0,1)$, hence we find
\[
\left(\frac{\gamma-\varphi(x)}{x^\alpha(1+|\log x|)}\right)^{2} \lesssim \max\left(c_\alpha,1\right),
\]
valid for all $x\in(0,\pi)$ and $\alpha\in(0,1)$.  Taking the supremum over $x\in\mathbb{S}$
now yields $c_\alpha^2\lesssim\max(c_\alpha,1)$, hence $c_\alpha\lesssim 1$ uniformly in $\alpha\in(0,1)$.
With this uniform bound, we may now take $ \alpha\nearrow 1$ in \eqref{eq:calpha} to get
\[
\gamma-\varphi(x)\lesssim x\left(1+|\log x|\right),
\]
valid for all $x\in(0,\pi)$.  Taking $x\to 0^+$ {finally} yields desired upper bound in \eqref{eq:reg}.
\end{proof}

Theorem \ref{thm:reg} implies that \emph{if} there exists an even, $2\pi$-periodic continuous 
solution of \eqref{profile}
that is nondecreasing on $(0,\pi)$ with $\varphi(0)=\gamma$, then the derivative of such a solution
blows up at $x=0$ at a logarithmic rate, i.e., the solution is \emph{logarithmically cusped}.  This is in contrast
to analogous highest solutions of the unidirectional Whitham equation, where such solutions
are {cusped} with their derivatives at the highest point blowing up at an algebraic rate \cite{EWhighest}.

From the proof of Theorem \ref{thm:reg} and the mapping property \eqref{eq:mapping}, it is tempting to
expect that such a highest solution $ \varphi$ of \eqref{profile} may belong to $C^1_*(\mathbb{S})$.  However,
this is readily seen to be false due to the following characterization: a function $f\in C^0(\mathbb{S})$
belongs to the Zygmund space $f\in C^1_*(\mathbb{S})$ if and only if
\[
\sup_{h\neq 0}\frac{\|f(\cdot+h)+f(\cdot-h)-2f\|_{L^\infty(\mathbb{S})}}{|h|}<\infty.
\]
Indeed, taking $f(x)=|x\log|x||$ we find that
\[
\|f(\cdot+h)-f(\cdot-h)-2f\|_{L^\infty(\mathbb{S})}\geq|f(h)+f(-h)-2f(0)|=2|h\log|h||,
\]
which immediately implies that $f\notin C^1_*(\mathbb{S})$ by the above characterization.
The precise regularity of such a highest wave of \eqref{profile} is thus an interesting question:
it is neither continuously differentiable, nor Lipschitz, nor does it belong to the Zygmund
space $C^1_*(\mathbb{S})$, {nor does its  derivative belong to the space of functions of bounded mean oscillation \({\rm BMO}(\RM)\), due to its sign-changing property at the singular crest. We settle here for the asymptotic property \eqref{eq:reg}, but it is reasonable to believe that the optimal regularity for the highest wave belongs is a dyadic space (see, e.g., \cite{MR2651916}).}

It now remains to prove that  a symmetric, $2\pi$-periodic, 
uni-modal solution of \eqref{profile} exists with $\varphi(0)=\gamma$.
This is the goal of the next section.

\section{Bifurcation of smooth periodic waves with a single crest in each period}\label{sec:bifurcation}
In this section we develop a bifurcation theory for the  profile equation \eqref{profile}.  
We begin with local bifurcation theory, and then extend
this to a global theory, carefully characterizing the end of the bifurcation branch with the help of nonlocal arguments (non-local here referring to \(x\)-space).  In particular, we will
see that the bifurcation branch terminates in a highest wave that is even, monotone increasing on $(-\pi,0)$,
and satisfies $\varphi(0)=\gamma$.  By Theorem \ref{thm:reg} above, it follows that this highest wave will
be a cusped traveling wave solution of the {bidirectional}  Whitham model.

Our study starts with the existence of small-amplitude, nonlinear solutions to \eqref{profile}.  To this end, notice that
for $\alpha\in(\frac{1}{2},1)$ the space $C^\alpha_{\rm even}(\mathbb{S})=C^{\alpha}_{*,{\rm even}}(\mathbb{S})$ of even functions in $C^\alpha(\mathbb{S})$ forms a Banach algebra.
Further, since $K$ is even it follows that, for such $\alpha$, $C^\alpha_{\rm even}(\mathbb{S})$ is {invariant under the action of $\mathcal{K}$, in view of that
\[
\mathcal{K}:C^\alpha_{\rm even}(\mathbb{S})\to C^{\alpha+1}_{\rm even}(\mathbb{S})\hookrightarrow C^\alpha_{\rm even}(\mathbb{S}).
\]
}
Seeking solutions of \eqref{profile} in $C^\alpha_{\rm even}(\mathbb{S})$, we consider the function
\[
F:C^{k,\alpha}_{\rm even}(\mathbb{S})\times\mathbb{R}\to C^{k,\alpha}_{\rm even}(\mathbb{S})
\]
defined by
\[
F(\varphi,c):=\mathcal{K}\varphi-N(\varphi;c),
\]
with $N$ as in~\eqref{profile}. Then $F$ is a real-analytic operator.
Observing that $F(0,c)=0$ for all $c\in\mathbb{R}$ and that
\[
\partial_uF[(0,c)]=\mathcal{K}-c^2
\]
{one readily sees} that $\ker\left(\partial_uF[(0,c)]\right)$ is trivial unless 
\begin{equation}\label{eq:dispersion relation}
c^2=\frac{\tanh(k)}{k}
\end{equation}
for some $k\in\NM_0$, in which case $\ker\left(\partial_uF[(0,c)]\right)={\rm span}\{\cos(kx)\}$.  In particular, such { values of} $c^2$ are simple eigenvalues of 
$\mathcal{K}$.  It follows from the analytic version of the Crandall--Rabinowitz theorem \cite{BTbook} that near any {point $(0,c)$ for such values of \(c\) there exists small-amplitude solutions \((\varphi,c)\) in $C^{k,\alpha}_{\rm even}(\mathbb{S}) \times \RM$} of the nonlocal profile equation \eqref{profile} (for details, see, e.g., \cite{EK11}). This result is summarized in the following proposition.  The asymptotic formulas could be obtained as in \cite{EK11} by using the general theory from \cite{MR2004250}. Or, as we have an analytic curve in \(C^\alpha(\mathbb{S}) \times \mathbb{R}\), \(\alpha > \frac{1}{2}\) (for which \(\varphi \in C^\alpha(\mathbb{S})\) has absolutely convergent Fourier series), by means of direct expansions in the equation: see, for example,
\cite{J13}.

\begin{proposition}\label{prop:local}
Fix $\alpha\in(\frac{1}{2},1)$.  For each integer $k\geq 1$ there exists $c_k:=\sqrt{\tanh(k)/k}$ and a local, analytic curve
\[
{\tau}\mapsto\left(\varphi({\tau}),c({\tau})\right)\in C^\alpha_{\rm even}(\mathbb{S})\times(0,1),
\]
defined in a (real) neighborhood of ${\tau}=0$, 
of non-trivial $2\pi$/k-periodic solutions of profile equation \eqref{profile} that bifurcate from the trivial solution curve 
$c\mapsto (0,c)$ at $(\varphi(0),c(0))=(0,c_k)$.  For $|{\tau}|\ll 1$, we have
\begin{align*}
&\varphi(x;\tau )=\tau \cos(kx)+\frac{3c_k\tau ^2}{4}\left(\frac{1}{c_k^2-1}+\frac{\cos(2kx)}{c_k^2-c_{2k}^2}\right)+  \mathcal{O}(\tau ^3)
\end{align*}
and
\begin{align*}
&c(\tau )=c_k+\frac{3\tau ^2}{8}\left[-\frac{1}{2c_k}+3c_k\left(\frac{1}{c_k^2-1}+\frac{1}{2(c_k^2-c_{2k}^2)}\right)\right]+  \mathcal{O}(\tau ^4).
\end{align*}
In a neighborhood of the bifurcation point $(0,c_k)$ these comprise
all non-trivial solutions of $F(\varphi,c)=0$ in $C^\alpha_{\rm even}(\mathbb{S})\times\mathbb{R}$, and there are no other
bifurcation points $c>0$, $c\neq 1$, in $C^\alpha_{\rm even}(\mathbb{S})$.  
\end{proposition}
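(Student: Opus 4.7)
The plan is to apply the analytic Crandall--Rabinowitz bifurcation theorem \cite{BTbook} to $F(\varphi,c):=\mathcal{K}\varphi-N(\varphi;c)$ at each trivial point $(0,c_k)$, and then to extract the stated expansion coefficients by matching orders in a formal series.

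\textbf{Analyticity and linearisation.} Since $\alpha>\tfrac{1}{2}$, the space $C^\alpha_{\rm even}(\mathbb{S})$ is a Banach algebra, and by \eqref{eq:mapping} the operator $\mathcal{K}$ maps it continuously into $C^{\alpha+1}_{*,{\rm even}}(\mathbb{S})\hookrightarrow C^\alpha_{\rm even}(\mathbb{S})$. Together with the polynomial nature of $N(\,\cdot\,;c)$, this gives joint real-analyticity of $F:C^\alpha_{\rm even}(\mathbb{S})\times\RM\to C^\alpha_{\rm even}(\mathbb{S})$. A direct calculation yields $\partial_\varphi F[(0,c)]=\mathcal{K}-c^2 I$.

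\textbf{Kernel, Fredholm property, transversality.} On $C^\alpha_{\rm even}(\mathbb{S})$ the multiplier $\mathcal{K}$ is diagonal in the basis $\{\cos(nx)\}_{n\geq 0}$ with eigenvalues $\tanh(n)/n$ (using $\tanh(0)/0=1$). This sequence is strictly decreasing in $n$, so each eigenvalue is simple, and $\ker(\mathcal{K}-c^2 I)$ is nontrivial precisely when $c^2=\tanh(k)/k$ for some $k\in\NM_0$; the case $k=0$ corresponds to the excluded value $c=1$, leaving the candidates $c=c_k$, $k\geq 1$, each with kernel $\mathrm{span}\{\cos(kx)\}$. Compactness of $\mathcal{K}$ on $C^\alpha_{\rm even}(\mathbb{S})$ follows from the smoothing $C^\alpha\to C^{\alpha+1}_*$ combined with the compact embedding $C^{\alpha+1}_*\hookrightarrow C^\alpha$, so $\mathcal{K}-c_k^2 I$ is Fredholm of index zero with range equal to the $L^2$-orthogonal complement of $\cos(kx)$ inside $C^\alpha_{\rm even}(\mathbb{S})$. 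Since
\[
\partial_c\partial_\varphi F[(0,c_k)]\cos(kx)=-2c_k\cos(kx)
\]
lies in the kernel, it is not in the range, and the Crandall--Rabinowitz transversality condition is satisfied.

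\textbf{Bifurcation curve and uniqueness near the trivial branch.} The analytic Crandall--Rabinowitz theorem now produces a local analytic curve $\eps\mapsto(\varphi(\eps),c(\eps))$ through $(0,c_k)$, parameterised so that $\varphi(\eps)=\eps\cos(kx)+\bigO(\eps^2)$, and exhausting all nontrivial solutions in a neighborhood. At any other trivial point $(0,c)$ with $c>0$, $c\neq 1$, and $c^2\notin\{\tanh(k)/k:k\geq 1\}$, the operator $\mathcal{K}-c^2 I$ is invertible, and the implicit function theorem rules out bifurcation. The translation $x\mapsto x+\pi/k$ preserves the equation while sending $\eps\mapsto-\eps$ on the branch, so $c(\eps)$ is even in $\eps$.

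\textbf{Expansion coefficients.} Substituting $\varphi(\eps)=\eps\cos(kx)+\eps^2\varphi_2+\eps^3\varphi_3+\ldots$ and $c(\eps)=c_k+\eps^2 c_2+\ldots$ into $F=0$ and matching orders, the $\bigO(\eps^2)$ identity reads
\[
(\mathcal{K}-c_k^2 I)\varphi_2=-\tfrac{3c_k}{4}\bigl(1+\cos(2kx)\bigr),
\]
which inverts directly to the stated $\varphi_2$ using $\mathcal{K}[1]=1$ and $\mathcal{K}[\cos(2kx)]=c_{2k}^2\cos(2kx)$. At $\bigO(\eps^3)$, the Fredholm solvability condition—projecting onto $\cos(kx)$ and employing $\cos^3(kx)=\tfrac{3}{4}\cos(kx)+\tfrac{1}{4}\cos(3kx)$—pins down $c_2$ to the stated value. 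The main obstacle is purely bookkeeping: the core analytic ingredients, namely real-analyticity of $F$, simplicity of the eigenvalue $c_k^2$, and transversality, are exactly what render the abstract theorem applicable.
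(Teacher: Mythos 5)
Your proposal is correct and follows essentially the same route as the paper: verify real-analyticity of $F$ on $C^\alpha_{\rm even}(\mathbb{S})$, identify the simple eigenvalues $c_k^2=\tanh(k)/k$ of the compact multiplier $\mathcal{K}$, check the Crandall--Rabinowitz transversality condition via $\partial_c\partial_\varphi F[(0,c_k)]\cos(kx)=-2c_k\cos(kx)$, and extract the expansion coefficients by matching powers of $\eps$. The paper states this quite tersely, citing \cite{BTbook} and \cite{EK11} for details; you fill in the Fredholm/compactness argument, the non-bifurcation at other $c$, and the order-$\eps^2$ and $\eps^3$ computations explicitly, all of which are consistent with the stated formulas.
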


There are a couple of comments to be given in connection to Proposition~\ref{prop:local}. We list these together in the following remark.

\begin{remark}\label{rem:local}
$ $\\
(i) When $k=0$, meaning $c=1$, a bifurcating line $c\mapsto(\frac{3c-\sqrt{8+c^2}}{2},c)$ of constant
solutions intersects the trivial solution curve $c\mapsto (0,c)$, resulting in a transcritical bifurcation.  Together, these constitute all solutions in $C^\alpha_{\rm even}(\mathbb{S})$ in a neighborhood of $(\varphi,c)=(0,1)$.\\[6pt]
(ii) There is nothing particular about the choice of  \(\alpha\) in this section. In fact, as we shall show, all small enough solutions are smooth, and so all agree by uniqueness. The choice \(\alpha > \frac{1}{2}\) is by convenience. For the global argument, however, the choice of \(\alpha\) has some implications for the proof. In particular, \(\alpha < 1\) makes it easier to rule out one alternative in Theorem \ref{thm:global} along the curve of solutions. \\[6pt]
(iii) In contrast to the corresponding unidirectional Whitham equation, we see from \eqref{eq:dispersion relation} that the {bidirectional} equation \eqref{profile} admits two families of small-amplitude solutions; one for positive values of \(c\), and one for negative. The change of variables \((\varphi,c) \mapsto -(\varphi,c)\) in \eqref{profile}
however guarantees that these are in one-to-one correspondence to each other.
\end{remark}

We shall now analyze the global structure of the local bifurcation curve constructed in Proposition~\ref{prop:local}.  To begin with, 
we introduce the admissible set 
\[
U:=\left\{(\varphi,c)\in C^\alpha_{\rm even}(\mathbb{S})\times \RM:\varphi<\gamma \right\},
\]
where \(\gamma = c (1 - \frac{1}{\sqrt{3}})\), and the set of solutions
\[
S:=\left\{(\varphi,c)\in U:F(\varphi,c)=0\right\}.
\]
With these definitions, the following global bifurcation result is an easy adaptation of \cite[Theorem 9.1]{BTbook}.

\begin{theorem}\label{thm:global}
The local bifurcation curve $\tau \mapsto (\varphi(\tau ),c(\tau ))\in C^\alpha_{\rm even}(\mathbb{S})\times \RM$ of solutions
of \eqref{profile} constructed in Proposition~\ref{prop:local} for \(k=1\) extend to a global continuous curve
of solutions
\[
\mathcal{R}=\left\{(\varphi(\tau ),c(\tau )):\tau \in[0,\infty)\right\}\subset S,
\]
that allows a local real-analytic reparametrization  about each $\tau >0$.  Furthermore, one of the following
alternatives hold:
\begin{itemize}
\item[(i)] $\|(\varphi(\tau ),c(\tau ))\|_{C^\alpha(\mathbb{S})\times\mathbb{R}}\to\infty$ as $\tau \to\infty$.\\[-6pt]
\item[(ii)] ${\rm dist}\left(\mathcal{R},\partial U\right)=0$.\\[-6pt]
\item[(iii)] The function $\tau \mapsto(\varphi(\tau ),c(\tau ))$ is $T$-periodic for some finite $T>0$.
\end{itemize}
\end{theorem}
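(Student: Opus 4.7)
The plan is to apply the analytic global bifurcation theorem of Buffoni and Toland (Theorem~9.1.1 in \cite{BTbook}) to the mapping $F\colon U\to C^\alpha_{\rm even}(\mathbb{S})$, $F(\varphi,c)=\mathcal{K}\varphi-N(\varphi;c)$. Since $N$ is polynomial in $\varphi$ with coefficients polynomial in $c$, and $\mathcal{K}$ is a bounded linear operator on $C^\alpha_{\rm even}(\mathbb{S})$, the map $F$ is real-analytic on $U$. The Crandall--Rabinowitz simple-eigenvalue and transversality conditions at the bifurcation point $(0,c_1)$ are already verified by Proposition~\ref{prop:local}, so it remains to establish (a) the Fredholm property of $\partial_\varphi F$ at every point of $S$, and (b) a suitable properness statement for $F$ on subsets of $U$ bounded away from $\partial U$.

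For (a), compute
\[
\partial_\varphi F(\varphi,c)\psi=\mathcal{K}\psi-N'(\varphi;c)\psi.
\]
On the admissible set $U$ one has $\varphi(x)<\gamma$ everywhere, and since $\gamma$ is the smallest positive root of $N'(\,\cdot\,;c)=0$ and $N$ is strictly increasing on $(-\infty,\gamma)$ (see Figure~\ref{F:Npic}), the function $N'(\varphi(\,\cdot\,);c)$ is continuous and bounded away from zero. Hence pointwise multiplication by $N'(\varphi;c)$ defines an invertible bounded operator on $C^\alpha_{\rm even}(\mathbb{S})$. By \eqref{eq:mapping} combined with the compact embedding $C^{\alpha+1}_{\rm even}(\mathbb{S})\hookrightarrow C^\alpha_{\rm even}(\mathbb{S})$, the operator $\mathcal{K}$ is compact on $C^\alpha_{\rm even}(\mathbb{S})$. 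Writing
\[
\partial_\varphi F(\varphi,c)=-N'(\varphi;c)\bigl(\mathrm{Id}-N'(\varphi;c)^{-1}\mathcal{K}\bigr),
\]
exhibits $\partial_\varphi F$ as an invertible operator composed with a compact perturbation of the identity, which is Fredholm of index zero by Riesz--Schauder theory.

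For (b), suppose $\{(\varphi_n,c_n)\}\subset S$ is bounded in $C^\alpha_{\rm even}(\mathbb{S})\times\mathbb{R}$ with $\sup_n\max\varphi_n\le\gamma(c_n)-\delta$ for some fixed $\delta>0$, where $\gamma(c)=c(1-1/\sqrt 3)$. By the compact embedding $C^\alpha_{\rm even}(\mathbb{S})\hookrightarrow C^{\alpha'}_{\rm even}(\mathbb{S})$ for $\alpha'<\alpha$, we may extract a subsequence with $(\varphi_n,c_n)\to(\varphi_\infty,c_\infty)$ in $C^{\alpha'}_{\rm even}(\mathbb{S})\times\mathbb{R}$; continuity of $F$ then yields $(\varphi_\infty,c_\infty)\in S$ with $\max\varphi_\infty\leq\gamma(c_\infty)-\delta$. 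Because each $\varphi_n$ takes values in a fixed compact subinterval of $(-\infty,\gamma(c_n))$ where the local inverse $N^{-1}(\,\cdot\,;c)$ is smooth uniformly in $n$, the identity $\varphi_n=N^{-1}(\mathcal{K}\varphi_n;c_n)$ combined with the smoothing \eqref{eq:mapping} upgrades convergence from $C^{\alpha'}_{\rm even}$ to $C^{\alpha'+1}_{\rm even}\hookrightarrow C^\alpha_{\rm even}$, as required.

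With (a) and (b) in place, the analytic global bifurcation theorem delivers the global continuous curve $\mathcal{R}\subset S$ that admits a local analytic reparametrization around each $\eps>0$, together with the stated trichotomy. The most delicate point is (b), where one must simultaneously track the quantitative distance from $\partial U$ so that $N^{-1}$ is well-defined and smooth along the subsequence; whenever one of these controls fails along $\mathcal{R}$, the mechanism forcing loss of compactness is exactly recorded by alternatives (i) or (ii), while the existence of a nontrivial closed loop is precisely alternative (iii). The remainder of the paper is devoted to ruling out (ii) and (iii), thereby forcing (i) to deliver a limiting highest wave.
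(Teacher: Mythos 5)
Your proposal follows the same route as the paper: it invokes the Buffoni--Toland analytic global bifurcation theorem, establishes the Fredholm index-zero property of $\partial_\varphi F$ via compactness of $\mathcal{K}$ (using \eqref{eq:mapping} and the compact embedding $C^{\alpha+1}\hookrightarrow C^\alpha$ on $\mathbb{S}$), and proves the relevant compactness/properness through the fixed-point identity $\varphi = N^{-1}(\mathcal{K}\varphi,c)$ on sets bounded away from $\partial U$. The Fredholm verification is correct and is left implicit in the paper's terse proof.

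There is, however, a genuine omission. The paper explicitly identifies a second hypothesis that must be checked before \cite[Theorem~9.1]{BTbook} yields the stated trichotomy: that $c(\eps)$ is \emph{not identically constant} for $0<\eps\ll 1$, i.e., that the local bifurcation is not vertical. The paper settles this by observing that $c''(0)<0$ for $k=1$, read off from the expansion in Proposition~\ref{prop:local}. Your appeal to the ``Crandall--Rabinowitz simple-eigenvalue and transversality conditions'' does not cover this point: transversality is a condition on $\partial_c\partial_\varphi F(0,c_1)$ at the bifurcation point and is perfectly compatible with a pitchfork whose speed $c(\eps)$ happens to be constant to all orders. Without verifying $c''(0)\neq 0$, the application of the cited theorem is incomplete. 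Adding the sentence that $c''(0)<0$ by Proposition~\ref{prop:local}, hence $\eps\mapsto c(\eps)$ is nonconstant near $\eps=0$, would close the gap.
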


\begin{proof}
By \cite[Theorem 9.1]{BTbook}, it is enough to verify that closed and bounded subsets of $S$ are compact
in $C^\alpha_{\rm even}(\mathbb{S})\times(0,1)$, and that $c(\tau )$ is not identically constant for $0<\tau \ll 1$.
As for the former, let $V\subset S$ be a closed and bounded set in $S$. By closedness there exists \(\delta > 0\) such that
\[
\inf_{V} \{\gamma -  \max \varphi\} > \delta, 
\]
for all pairs \((\varphi,c)\) in \(V\). One therefore has, similarly to \eqref{eq:nemytskii}, that the function
\[
N^{-1}(\cdot, c)
\] 
is a well-defined and smooth on \(V\). Since ${\mathcal K}$ maps \(C^\alpha_{\rm even}(\mathbb{S})\) continuously into  \(C^{\alpha+1}_{\rm even}(\mathbb{S})\),  and 
$C^{\alpha+1}_{\rm even}(\mathbb{S})$ is compactly embedded in $C^\alpha_{\rm even}(\mathbb{S})$, it follows that the composition
$N^{-1}(\cdot, c) \circ \mathcal{K}$ maps bounded sets into precompact sets. But for solutions in \(S\) we have
\[
\varphi = \mathcal [N^{-1}(\cdot, c) \circ \mathcal{K}]\varphi,
\]
so that \(V\) is indeed precompact. Since $V$ is also closed, it is compact.

Now, for $k=1$ in Proposition~\ref{prop:local} one has $c''(0)<0$, whence the {result is a consequence of} the global bifurcation result in \cite{BTbook}.
\end{proof}

We  proceed to classify the limiting behavior of the solutions at the end of the bifurcation curve. We will prove 
that both alternatives (i) and (iii) in Theorem \ref{thm:global} are excluded, whence (ii) happens by the curve approaching a ``highest'' wave, which we shall show satisfies \(\varphi(0) = \gamma\). As a first step, observe that Proposition \ref{prop:local} implies $0<c(\tau )<1$ for all $\tau >0$ sufficiently
small.  The next result shows, in combination with Remark~\ref{rem:sign-changing}, that the global bifurcation curve cannot pass \(c=0\) or \(c = 1\) without crossing the curve of zero solutions.

\begin{lemma}\label{lemma:uniqueness}
For \(c=1\) the zero solution is the unique solution of \eqref{profile} satisfying $\varphi\leq\gamma$; for \(c=0\) one additionally has {the solution} \(\varphi \equiv -\sqrt{2}\).
\end{lemma}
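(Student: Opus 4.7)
The plan is a straightforward maximum/minimum principle argument exploiting the two key properties of the integral kernel: strict positivity of $K_p$ on $(-\pi,\pi)\setminus\{0\}$ and the normalization $\|K_p\|_{L^1(-\pi,\pi)}=1$, both established in Lemma~\ref{lemma:Kp}. Together these imply that for any non-constant, continuous, $2\pi$-periodic function $\varphi$, with $x_M$ and $x_m$ denoting points where $\varphi$ attains its maximum and minimum, one has the strict inequalities
\begin{equation*}
\mathcal{K}\varphi(x_m) > \varphi(x_m) \qquad \text{and} \qquad \mathcal{K}\varphi(x_M) < \varphi(x_M).
\end{equation*}
Combined with the pointwise profile equation $\mathcal{K}\varphi = N(\varphi)$, this forces $N(\varphi(x_m)) - \varphi(x_m) > 0$ and $N(\varphi(x_M)) - \varphi(x_M) < 0$ for any non-constant solution. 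The rest of the proof consists of ruling this out for $c \in \{0,1\}$ under the constraint $\varphi \leq \gamma$, and separately classifying the constant solutions.

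For the constant solutions, one solves $a = N(a)$ directly. When $c=1$, $N(a)-a = \tfrac{a^2}{2}(a-3)$, so the constant solutions are $a=0$ and $a=3$; only $a=0$ satisfies $a \leq \gamma = 1 - 1/\sqrt{3}$. When $c=0$, $N(a)-a = \tfrac{a}{2}(a^2-2)$, so the constant solutions are $a \in \{0, \pm\sqrt{2}\}$; here $\gamma = 0$ excludes $+\sqrt{2}$, leaving $a=0$ and $a=-\sqrt{2}$.

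It then remains to exclude non-constant solutions. In the case $c=1$, every value $a = \varphi(x_m)$ satisfies $a \leq \gamma < 3$, whence $\tfrac{a^2}{2}(a-3) \leq 0$, with equality only at $a=0$. This contradicts the strict inequality $N(\varphi(x_m)) - \varphi(x_m) > 0$. In the case $c=0$, first apply the estimate at $x_M$: under $\varphi(x_M) \leq \gamma = 0$, the requirement $\tfrac{\varphi(x_M)}{2}(\varphi(x_M)^2 - 2) < 0$ forces $\varphi(x_M) < -\sqrt{2}$, so $\varphi < -\sqrt{2}$ uniformly. But then at $x_m$ one has $\varphi(x_m) \leq \varphi(x_M) < -\sqrt{2}$, so $\varphi(x_m)^2 > 2$ and $\tfrac{\varphi(x_m)}{2}(\varphi(x_m)^2 - 2) < 0$, contradicting the opposite strict inequality at the minimum.

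No step here poses a real obstacle; the only subtle point is to correctly track the signs in the factorizations of $N(a)-a$ and to verify that the constraint $\varphi \leq \gamma$ lies on the correct side of every critical value ($0$ and $3$ when $c=1$; $0$ and $\pm\sqrt{2}$ when $c=0$). Once this is done, the maximum-principle contradiction at $x_m$ (for $c=1$) or at both $x_M$ and $x_m$ (for $c=0$) closes the argument.
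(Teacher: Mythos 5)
Your proof is correct, and it rests on the same two ingredients the paper uses ($K_p \geq 0$ with $\|K_p\|_{L^1(-\pi,\pi)} = 1$, i.e.\ $\mathcal{K}$ is averaging). The route differs in detail, and in a way that makes your argument more uniform. For $c=1$, the paper writes $(1-\mathcal{K})\varphi = \tfrac{1}{2}\varphi^2(3c - \varphi)$, integrates over a period, and uses $\widehat{K_p}(0) = 1$ to kill the left-hand side, which forces the (sign-definite) right-hand side to vanish identically; you instead evaluate at a minimizer and use the strict averaging inequality for non-constant $\varphi$. For $c=0$, the paper uses the \emph{non-strict} bounds $\min\varphi \leq \mathcal{K}\varphi(x) \leq \max\varphi$, which does not immediately exclude non-constant solutions; it then splits into the case $\max\varphi = 0$ (handled via Remark~\ref{rem:sign-changing}) and $\max\varphi < 0$ (handled via the sandwich $(\max\varphi)^2 \geq 2 \geq (\min\varphi)^2$, which, because $\min\varphi \leq \max\varphi < 0$ forces $(\min\varphi)^2 \geq (\max\varphi)^2$, collapses to $\varphi \equiv -\sqrt{2}$). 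Your approach instead first classifies constants by solving $a = N(a)$ directly, then disposes of all non-constant solutions in one stroke via the strict inequalities at $x_M$ and $x_m$. The net effect is the same conclusion; what your version buys is a single, unified mechanism across both $c$-values, at the modest cost of having to justify strictness (which does hold: for continuous non-constant $\varphi$, $K_p(x_m - \cdot) > 0$ a.e.\ meets $\varphi > \varphi(x_m)$ on a set of positive measure). What the paper's integration trick for $c=1$ buys is that it works for merely bounded solutions without appealing to continuity at the extremum. Both are sound; no gap.
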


\begin{proof}
When $c=1$,  $\varphi$ solves the equation
\[
\left(1-\mathcal{K}\right)\varphi=\frac{1}{2}\varphi^2\left(3c-\varphi\right).
\]
Recalling that \(\hat K(0) = 1\), we find by integrating over $(-\pi,\pi)$  that
\[
\int_{-\pi}^\pi \varphi^2\left(3c-\varphi\right)dx=0.
\]
Since $\varphi \leq \gamma < 3c$ it follows that $\varphi\equiv 0$, as claimed.

Now, let $\varphi$ be a solution of \eqref{profile} with $c=0$ satisfying $\max_x \varphi(x)\leq \gamma = 0$. 
Since Lemma \ref{lemma:Kp} implies that $\mathcal{K}\in\mathcal{L}(L^\infty(\mathbb{S})$ with unit operator norm,
the profile equation \eqref{profile} implies that
\[
{\textstyle \frac{1}{2}}(\min_x \varphi(x))^{3}\geq \min_{x}\varphi(x) \quad\text{and }\quad {\textstyle \frac{1}{2}}(\max_x \varphi(x))^3\leq\max_{x}\varphi(x).
\]
Either \(\max_x \varphi(x) = 0\), in which case it follows from Remark~\ref{rem:sign-changing} that \(\varphi \equiv 0\), or \(\max_x \varphi(x) < 0\) with \((\max_x \varphi(x))^2 \geq 2\).
Since $(\min_x \varphi(x))^2 \leq 2$, we must have \(\varphi \equiv -\sqrt{2}\) in the second case (else \(\max \varphi\) would be strictly smaller than \(\min \varphi\)).
\end{proof}

We now show that alternative (iii) in Theorem \ref{thm:global} is excluded.  {Consider} the set
\begin{equation}\label{eq:cone}
\Lambda = \{\varphi \in C^\alpha(\mathbb{S}) \colon \varphi \text{ is even and non-decreasing on } (-\pi,0)\},
\end{equation}
which is a closed cone in \(C^\alpha(\mathbb{S})\).  Observe that from Proposition \ref{prop:local}
and Lemma \ref{lemma:uniform_c-bound}, we have that the solutions $ \varphi(\tau )$ for 
all $0<\tau \ll 1$, and may be expanded as 
\[
{\varphi}(\tau )=\tau \cos+\mathcal{O}(\tau ^2)
\]
in $C^\alpha(\mathbb{S})$. As \(\tau  \mapsto \varphi(\tau )\) analytic, and the identity map \(\varphi \mapsto \varphi\) given in \eqref{eq:nemytskii} is smooth \(C^s_*(\mathbb{S}) \to C^{s+1}_*(\mathbb{S})\), for all \(s \geq 0\) and all solutions satisfying \(\varphi < \gamma\), it follows in particular that \(\tau  \mapsto \varphi(\tau )\) is smooth \((-\delta, \delta) \to C^{2}(\mathbb{S})\) around a neighbourhood of the origin. Taylor's formula and uniqueness in the larger space \(C^\alpha(\mathbb{S})\) then implies that the above asymptotics hold in \(C^2(\mathbb{S})\). It is then easy to check that  ${\varphi}(\tau )\in\Lambda\setminus\{0\}$ for all $0<\tau \ll 1$.  
The next result
shows that all solutions \(\left\{\varphi(\tau )\right\}_{\tau >0}\) on the global bifurcation curve \(\mathcal R\) belong to \(\Lambda \setminus \{0\}\), and consequently that alternative (iii) in Theorem \ref{thm:global} is excluded.

\begin{lemma}\label{lemma:noperiod}
One has \(\varphi(\tau ) \in \Lambda \setminus \{0\}\) for all \(\tau  > 0\) along the global bifurcation curve. In particular, alternative (iii) in Theorem~\ref{thm:global} cannot occur.
\end{lemma}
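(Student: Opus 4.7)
The strategy is to establish $\varphi(\varepsilon) \in \Lambda \setminus \{0\}$ for all $\varepsilon > 0$ by an open–closed–nonempty argument for
\[
J \;:=\; \{\varepsilon > 0 \colon \varphi(\varepsilon) \in \Lambda \setminus \{0\}\}.
\]
The exclusion of alternative (iii) then follows immediately: $T$-periodicity would force $\varphi(T)=\varphi(0)=0$ for some $T > 0$, placing $T$ outside $J$ and contradicting $J = (0,\infty)$.

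Non-emptiness of $J$ near $\varepsilon=0$ follows from Proposition~\ref{prop:local}: the expansion $\varphi(\varepsilon)=\varepsilon\cos(x)+\mathcal{O}(\varepsilon^2)$ makes $\varphi(\varepsilon)$, for small $\varepsilon>0$, a small perturbation of the strictly monotone function $\varepsilon\cos(x)\in\Lambda$. Openness at $\varepsilon_0\in J$ uses $\max\varphi(\varepsilon_0)<\gamma$ (since $\mathcal{R}\subset U$), Lemma~\ref{lemma:uniform_c-bound} for smoothness, and Theorem~\ref{thm:nodal} to obtain the \emph{strict} nodal properties $\varphi'(\varepsilon_0)>0$ on $(-\pi,0)$ together with $\varphi''(0)<0$ and $\varphi''(\pm\pi)>0$. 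A bootstrap of the fixed-point identity $\varphi=N^{-1}(\mathcal{K}\varphi)$, which is smooth on $\{\max\varphi<\gamma\}$, upgrades the $C^\alpha$-analyticity of the curve to $C^2$-continuity near $\varepsilon_0$. These strict sign conditions are then preserved under small $C^2$-perturbations, placing a neighborhood of $\varepsilon_0$ in $J$.

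Closedness is the delicate step. If $\varepsilon_n\in J$ with $\varepsilon_n\to\varepsilon^*>0$, then by continuity and closedness of $\Lambda$ in $C^\alpha$, one has $\varphi(\varepsilon^*)\in\Lambda$, and by openness of $J$ the alternative $\varphi(\varepsilon^*)\neq 0$ is incompatible with $\varepsilon^*$ being a boundary point, forcing $\varphi(\varepsilon^*)=0$. Then $(0,c(\varepsilon^*))$ is a limit of nontrivial solutions, hence a bifurcation point from the trivial branch, so by Proposition~\ref{prop:local} and Remark~\ref{rem:local} we have $c(\varepsilon^*)\in\{c_k\}_{k\geq 1}\cup\{1\}$. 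For $c_k$ with $k\geq 2$, local solutions have minimal period $2\pi/k$ and hence $k\geq 2$ crests on $[-\pi,\pi)$, incompatible with $\Lambda$, contradicting $\varphi(\varepsilon_n)\in\Lambda$ via local uniqueness. For $c=1$, Remark~\ref{rem:local}(i) and Lemma~\ref{lemma:uniqueness} restrict local solutions to the constants $\Gamma_\pm(c)$, which cannot coincide with the non-constant analytic curve $\mathcal{R}$.

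The only remaining candidate is $c(\varepsilon^*)=c_1$, and the main obstacle is its treatment. Local analytic uniqueness at $(0,c_1)$ forces $\mathcal R$ near $\varepsilon^*$ to coincide with the local bifurcation curve $s\mapsto(s\cos x+\mathcal{O}(s^2),\,c_1+\mathcal{O}(s^2))$. Writing $\varepsilon\mapsto s(\varepsilon)$ along $\mathcal{R}$ near $\varepsilon^*$ with $s(\varepsilon^*)=0$ and $s(\varepsilon_n)>0$, the analyticity and Crandall--Rabinowitz transversality at $(0,c_1)$ identify the return of $\mathcal R$ to its starting point with alternative (iii) with period $T=\varepsilon^*$. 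Extending $\varphi$ by periodicity to $\varepsilon<0$ via $\varphi(-\delta):=\varphi(T-\delta)$, one has, by the open--closed argument already established on $(0,T)$ and $C^\alpha$-continuity, that $\varphi(T-\delta)\in\Lambda$ for small $\delta>0$; on the other hand, the unique local expansion at $\varepsilon=0$ from Proposition~\ref{prop:local} gives $\varphi(-\delta)=-\delta\cos(x)+\mathcal{O}(\delta^2)\notin\Lambda$, a contradiction. The hard part is precisely this asymmetric interaction between the two-sided local parametrisation at $(0,c_1)$ and the one-sided cone $\Lambda$, where one must carefully rule out a ``tangential return'' or ``crossing-through'' scenario using the simple-eigenvalue structure of the bifurcation; the approach parallels the one taken in~\cite{EWhighest,EK11}.
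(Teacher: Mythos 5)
Your open--closed argument for $J = \{\eps > 0 : \varphi(\eps) \in \Lambda\setminus\{0\}\}$ is structurally equivalent to the paper's supremum argument, and the non-emptiness, openness (via the strict nodal signs of Theorem~\ref{thm:nodal} and the bootstrap $\varphi = N^{-1}(\mathcal K\varphi)$), and the exclusion of $\varphi(\eps^*)\in\Gamma_\pm$, of $c(\eps^*)=1$, and of $c(\eps^*)=c_k$ with $k\geq 2$, all track the paper closely. The genuine divergence --- and the genuine gap --- is in the $c(\eps^*) = c_1$ case. The paper resolves it by noting that if $\mathcal R$ returns to $(0,c_1)$, then near $\bar\eps$ the curve must coincide with the primary local branch; this yields countably many distinct pairs $\eps_{1,j} < \eps_{2,j}$ with $\eps_{1,j}\searrow 0$, $\eps_{2,j}\nearrow\bar\eps$, and $\mathcal R(\eps_{1,j})=\mathcal R(\eps_{2,j})$, each of which forces $\ker DF(\mathcal R(\eps_{1,j}))$ to be nontrivial --- contradicting the fact (from \cite{BTbook}) that the parameter values at which the linearization degenerates are isolated. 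Your argument instead asserts that the return to $(0,c_1)$ simply \emph{is} alternative~(iii), and then tries to derive a contradiction by extending $\varphi$ by $T$-periodicity via $\varphi(-\delta):=\varphi(T-\delta)$ and comparing the two sides of the pitchfork. This is circular: the periodic extension presupposes the very $T$-periodicity you are invoking, and the identification of ``return to the bifurcation point'' with alternative~(iii) is not automatic --- the analytic reparametrization $s(\eps)$ near $\eps^*$ with $s(\eps^*)=0$ and $s(\eps_n)>0$ could fold back (i.e.\ $s$ vanishing to even order), in which case the curve retraces the $s>0$ segment rather than crossing through to $s<0$, and the resulting self-intersection is not the $T$-periodicity alternative. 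Ruling out this scenario is precisely what the kernel-isolatedness argument accomplishes. You correctly flag this as ``the hard part,'' but the sketch you offer does not close it.
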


{
\begin{remark}
The proof of Lemma~\ref{lemma:noperiod} requires the nodal pattern of \(\varphi(\tau )\) proved in Theorem~\ref{thm:nodal}, combined with the properties of the  curve \(\Gamma_-\) of trivial solutions. A detailed proof, based on the theory in \cite{BTbook}, is carried out in \cite{EWhighest}.  However, in \cite{EWhighest} there exists
a Galilean symmetry relating solutions with wave speed $c>1$ to those with wave speed $0\leq c<1$, and
this fact was used in the proof.  Since the profile equation \eqref{profile} in the present
case does \emph{not} admit such a symmetry, for completeness we outline the main steps of the proof, providing
full details only in the necessary modifications.
\end{remark}
}

\begin{proof}
To begin, notice that if the lemma were false then the number
\[
\bar\tau :=\sup\left\{\eta>0 \colon {\varphi} (\eta)\in\Lambda\setminus\{0\}\right\}
\]
would be finite and strictly positive.  Since $\Lambda$ is a closed subset of $C^\alpha(\mathbb{S})$
it {must be} that ${\varphi}(\bar\tau )\in\Lambda$.  As in \cite{EWhighest}, one may use the nodal
pattern in Theorem \ref{thm:nodal} to argue  that
if $ \varphi(\bar\tau )$ is non-constant, then it is an interior point of $\mathcal{R}\cap\Lambda$
with respect to the $C^\alpha(\mathbb{S})$ metric relative to $\mathbb{R}$, contradicting
the definition of $\bar\tau $.  Therefore, by the discussion
at the beginning of Section \ref{sec:a priori}, one of the following must be true:
\[
 \varphi (\bar\tau )\in\Gamma_-,\quad \varphi(\bar\tau )\in\Gamma_+,\quad\textrm{or}
\quad\varphi(\bar\tau )\equiv 0.
\]
Since $\mathcal{R}\subset S$, the possibility that $ \varphi(\bar\tau )\in\Gamma_+$ is clearly excluded.
Further, if $c(\bar\tau )>1$ then there 
exists a $0<\tau _1<\bar\tau $ such that $c(\tau _1)=1$.  In this case, {Lemma} \ref{lemma:uniqueness} implies
that $ \varphi(\tau _1)\equiv 0$, which contradicts the definition of $\bar\tau $.  Thus, it must be that
$0\leq c(\bar\tau )\leq 1$.  Suppose now that $ \varphi(\bar\tau )\in\Gamma_-$ with $c(\bar\tau )\in[0,1]$.
If $c(\bar\tau )\in[0,1)$ then since  $\max_x\left[\Gamma_-(c(\bar\tau ))\right](x)<0$ it follows by 
continuity that there {exists an $\tau _2 \in (0,\bar\tau )$} such that $\max_x  \varphi(\tau _2)=0$ which, 
by Remark \ref{rem:sign-changing},
implies that ${\varphi}(\tau _2)\equiv 0$, again contradicting the definition of $\bar\tau $.
Consequently, if ${\varphi}(\bar\tau )\in\Gamma_-$ it must be that $c(\bar\tau )=1$ and hence ${\varphi}(\bar\tau )\equiv 0$
by Lemma \ref{lemma:uniqueness}.  
Recalling that the only solutions with $0<c<1$ that connects to $  ({\varphi}, c)=(0,1)$ are the constant
solutions, it follows that either ${\varphi}(\tau )=0$ for all $0<\bar\tau -\tau \ll 1$, 
again contradicting the definition of $\bar\tau $,
or ${\varphi}(\tau )\in \Gamma_-$ with $0<c(\tau )<1$
for all $0<\bar\tau -\tau \ll 1$.  The latter case, however, has already been excluded.

{In summary,  $\varphi(\bar\tau )\equiv 0$ and $c(\tau )\in(0,1)$} uniformly for all $\tau \in[0,\bar\tau ]$.  
The remainder of the proof {goes} as in \cite{EWhighest}, hence we just outline the details.
By Proposition \ref{prop:local} we see that $({\varphi}(\bar\tau ),c(\bar\tau ))$ is a local bifurcation point
and, since $\cos(kx)\in\Lambda$ if and only if $k=1$, we find that $\mathcal{R}$ coincides with
the primary branch, i.e., with itself, for $0<\bar\tau -\tau \ll 1$ (here we think of \(\mathcal R\) as parametrized by \(\tau\)).  Precisely, there exists a countably
infinite set of pairs $\{(\tau _{1,j},\tau _{2,j})\}_{j=1}^\infty$ such that $\tau _{1,j}\searrow 0$ and
$\tau _{2,j}\nearrow \bar\tau $ as $j\to\infty$ with $\mathcal{R}(\tau _{1,j})=\mathcal{R}(\tau _{2,j})$
for all $j\geq 1$. It follows that the kernel of $DF(\mathcal{R}(\tau _{1,j}))$ is nontrivial, having
dimension at least one for each $j\geq 1$.  Since the values of $\tau $ for which the kernel
of $DF(\mathcal{R}(\tau ))$ is nontrivial are known to be isolated, c.f. \cite{BTbook}, this yields
a contradiction.  {Therefore,} such an $\bar\tau $ does not exist, and ${\varphi}(\tau )\in\Lambda\setminus\{0\}$
for all $\tau >0$ as claimed.
\end{proof}

Combining Theorem \ref{thm:nodal} with Lemma \ref{lemma:noperiod}, {one obtains} that all solutions ${\varphi}(\tau )$
along the main bifurcation curve $\mathcal{R}$ are nontrivial, smooth, even, and strictly increasing on $(-\pi,0)$.
We now wish to pass to the limit $\tau \to\infty$ along the global bifurcation branch, obtaining
a nontrivial highest wave.  To this end, the next result shows that 
the solution set is relatively compact in the appropriate space.

{
\begin{lemma}\label{lemma:arzela}
Given a sequence  \(\{(c(\tau _n),\varphi(\tau _n))\}_n\) of solution pairs along the global bifurcation curve from Theorem~\ref{thm:global} satisfying \( c_n \lesssim 1\),  there exists a subsequence converging  uniformly to a solution pair \((\varphi,c)  \in C(\mathbb{S}) \times \RM\).
\end{lemma}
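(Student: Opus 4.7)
The plan is to apply the Arzelà--Ascoli compactness theorem using a uniform Hölder bound on the sequence, and then to pass to the limit in the profile equation using that \(K_p \in L^1(-\pi,\pi)\).

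For the uniform bound, the assumption \(c_n \eqsim 1\) places all the wave speeds in a compact subinterval of \((0,\infty)\), so Lemma~\ref{lemma:uniform_c-bound} yields \(\|\varphi(\varepsilon_n)\|_\infty \lesssim 1\) uniformly in \(n\). By Lemma~\ref{lemma:noperiod} the iterates \(\varphi(\varepsilon_n)\) are even, \(2\pi\)-periodic, nondecreasing on \((-\pi,0)\), and bounded above by \(\gamma(c_n)\). Hence Remark~\ref{remark:uniform Holder} applies and delivers a uniform bound \(\|\varphi(\varepsilon_n)\|_{C^{\alpha_0}(\mathbb{S})} \lesssim 1\) for any fixed \(\alpha_0 \in (0,1)\). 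Equicontinuity combined with the \(L^\infty\) bound and Arzelà--Ascoli then produce a subsequence (not relabelled) and a continuous function \(\varphi\) such that \(\varphi(\varepsilon_n) \to \varphi\) uniformly on \(\mathbb{S}\); boundedness of \(\{c_n\}\) permits a further extraction so that \(c_n \to c\) for some \(c \in (0,\infty)\). The limit \(\varphi\) inherits evenness, periodicity, monotonicity on \((-\pi,0)\), and the bound \(\varphi \leq \gamma(c)\) by pointwise passage to the limit.

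It then remains to verify that \((\varphi,c)\) satisfies the profile equation \(\mathcal{K}\varphi = N(\varphi;c)\). The right-hand side converges uniformly, because \(N\) is a cubic polynomial in \(\varphi\) whose coefficients depend continuously on \(c\); for the left-hand side, Young's inequality for convolutions gives
\[
\|\mathcal{K}\varphi(\varepsilon_n) - \mathcal{K}\varphi\|_\infty \leq \|K_p\|_{L^1(-\pi,\pi)} \|\varphi(\varepsilon_n) - \varphi\|_\infty \longrightarrow 0.
\]
The main technical input is the uniform Hölder bound from Remark~\ref{remark:uniform Holder}, without which equicontinuity would not be apparent; once that is in hand, the remainder of the argument is a routine compactness procedure and no substantive obstacle arises.
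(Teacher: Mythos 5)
Your proof is correct and follows essentially the same route as the paper's: a uniform $C^\alpha$ bound via Remark~\ref{remark:uniform Holder}, Arzel\`a--Ascoli to extract a uniformly convergent subsequence (plus Bolzano--Weierstrass for $\{c_n\}$), and then passage to the limit in the profile equation using $K_p\in L^1(\mathbb{S})$ and the continuity of $N$ in $(\varphi,c)$. You are slightly more explicit than the paper in citing Lemma~\ref{lemma:noperiod} for the nodal structure needed to invoke Remark~\ref{remark:uniform Holder}, and in spelling out the Young-type convolution estimate, but these are just unspoken steps made explicit; the argument is the same.
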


\begin{proof}
Let \(\{(c_n,\varphi_n)\}_n = \{(c(\tau _n),\varphi(\tau _n))\}_n\).  By Lemmas~\ref{lemma:uniqueness} and~\ref{lemma:noperiod} we have \(c_n > 0\) for each \(n\). By assumption, there thus exists a subsequence such that \(c_n \to c\) in \(\RM\).  
Theorem~\ref{thm:reg}(ii) then guarantees  that \(\{\varphi_n\}_n\) is uniformly bounded in \(C^\alpha(\mathbb{S})\), where we may pick \(\alpha \in (0,1)\) by convenience. 
In particular, \(\{\varphi_n\}_n\) is an equicontinuous family of solutions, and thus, by Arzela--Ascoli's lemma and compactness of \(\mathbb{S}\), has a subsequence converging uniformly to a function \(\varphi \in C(\mathbb{S})\). To see that \((\varphi,c)\) solves \eqref{profile}, it suffices to note that \(K_p \in L^1(\mathbb{S})\), and that \(N\) is smooth in \(\varphi\) and \(c\). We may thus let \((\varphi_n,c_n) \to (\varphi,c)\) in \(C(\mathbb{S}) \times \RM\) in \eqref{profile} to conclude that \( (\varphi,c)\) solves the same equation.
\end{proof}
}

Finally, to exclude that we end up with a trivial wave, we prove the following a priori bound 
on the wavespeed along the main bifurcation branch.

\begin{lemma}\label{lemma:c_uniformbound}
Along the global bifurcation curve in Theorem \ref{thm:global}, the wavespeed \(c(\tau )\) satisfies
\[
 0<c_{\rm min} \leq c(\tau ) \leq c_{\rm max} <1.
\]
In particular, alternative (i) in Theorem \ref{thm:global} is excluded.
\end{lemma}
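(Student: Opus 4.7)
The plan is to prove the two bounds \(c(\varepsilon) \leq c_{\max} < 1\) and \(c(\varepsilon) \geq c_{\min} > 0\) separately, and then to deduce the exclusion of alternative (i) as a direct consequence. Indeed, once \(c(\varepsilon)\) is confined to a compact subinterval of \((0,\infty)\), Remark~\ref{remark:uniform Holder} furnishes a uniform bound on \(\|\varphi(\varepsilon)\|_{C^\alpha(\mathbb{S})}\), which together with the uniform bound on \(|c(\varepsilon)|\) rules out \(\|(\varphi(\varepsilon),c(\varepsilon))\|_{C^\alpha \times \mathbb{R}} \to \infty\).

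For the upper bound, I would first argue that \(c(\varepsilon) \neq 1\) pointwise along the curve: if \(c(\varepsilon_0) = 1\) at some \(\varepsilon_0 > 0\), then Lemma~\ref{lemma:uniqueness} forces \(\varphi(\varepsilon_0) \equiv 0\), contradicting \(\varphi(\varepsilon_0) \in \Lambda \setminus \{0\}\) from Lemma~\ref{lemma:noperiod}. Combined with \(c(0) = c_1 < 1\) and continuity of \(c(\cdot)\) along the connected curve \(\mathcal R\), this yields \(c(\varepsilon) < 1\) for every \(\varepsilon \geq 0\). To upgrade this to a strict uniform bound, suppose a sequence \(c(\varepsilon_n) \to 1^-\). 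Since \(c_n\) is eventually in a compact subinterval of \((0,\infty)\), Lemma~\ref{lemma:arzela} produces a subsequential uniform limit \((\varphi_\ast,1)\) which still solves \eqref{profile}, satisfies \(\varphi_\ast \in \Lambda\), and obeys \(\varphi_\ast \leq 1 - 1/\sqrt{3}\); Lemma~\ref{lemma:uniqueness} then forces \(\varphi_\ast \equiv 0\). But by Remark~\ref{rem:local}, every solution in a \(C^\alpha_{\rm even}(\mathbb{S})\)-neighborhood of \((0,1)\) is either trivial or lies on the constant branch \(\Gamma_-\), whereas each \(\varphi(\varepsilon_n)\) is strictly increasing on \((-\pi,0)\) by Theorem~\ref{thm:nodal}, and hence neither. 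This contradiction establishes \(\sup_\varepsilon c(\varepsilon) < 1\).

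For the lower bound, the pointwise estimate \(c(\varepsilon) > 0\) is automatic: if \(\max \varphi(\varepsilon) = 0\) at some \(\varepsilon > 0\), Remark~\ref{rem:sign-changing} forces \(\varphi(\varepsilon) \equiv 0\), contradicting Lemma~\ref{lemma:noperiod}, so \(0 < \max\varphi(\varepsilon) \leq c(\varepsilon)(1 - 1/\sqrt{3})\) gives \(c(\varepsilon) > 0\). Assume for contradiction \(c(\varepsilon_n) \to 0\) along a subsequence. Using \(\|\varphi_n\|_\infty \lesssim 1\) from Lemma~\ref{lemma:uniform_c-bound} together with the equicontinuity of \(\mathcal K \varphi_n\) inherited from convolution against \(K_p \in L^1(\mathbb{S})\), I extract a subsequential limit \(\varphi_\ast\) for which \((\varphi_\ast,0)\) solves \eqref{profile}; since \(\max \varphi_\ast \leq \gamma(0) = 0\), Lemma~\ref{lemma:uniqueness} gives either \(\varphi_\ast \equiv 0\) or \(\varphi_\ast \equiv -\sqrt{2}\). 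The case \(\varphi_\ast \equiv -\sqrt{2}\) is excluded by continuity of \(\varepsilon \mapsto \max \varphi(\varepsilon)\) along \(\mathcal R\): this map is strictly positive near \(\varepsilon = 0\) by Proposition~\ref{prop:local} and tends to \(-\sqrt{2}<0\) along \(\varepsilon_n\), so the intermediate value theorem yields \(\varepsilon'\) with \(\max \varphi(\varepsilon') = 0\), hence \(\varphi(\varepsilon') \equiv 0\) by Remark~\ref{rem:sign-changing}, contradicting Lemma~\ref{lemma:noperiod}. The remaining case \(\varphi_\ast \equiv 0\) means \(\mathcal R\) accumulates at the trivial point \((0,0)\), which I would rule out by adapting the argument at the end of the proof of Lemma~\ref{lemma:noperiod}: accumulation of a primary \(\cos(x)\)-mode branch at a point on the trivial solution curve forces infinitely many coincident bifurcation pairs along \(\mathcal R\), contradicting isolation of the zero set of \(DF(\mathcal R(\varepsilon))\) for an analytic global bifurcation branch in the sense of \cite{BTbook}.

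The main obstacle is this last case \(\varphi_\ast \equiv 0\) in the lower bound, since \(c = 0\) is only the accumulation point of the discrete bifurcation values \(c_k = \sqrt{\tanh(k)/k}\) rather than itself a bifurcation point, so the standard isolation argument used for the upper bound does not transfer directly. In addition, Remark~\ref{remark:uniform Holder} degrades as \(c \to 0\), so extracting a convergent subsequence of \(\{\varphi(\varepsilon_n)\}\) cannot rely on Lemma~\ref{lemma:arzela} as stated; instead one must work with the weaker \(L^\infty\) bound from Lemma~\ref{lemma:uniform_c-bound} combined with the smoothing property of convolution against \(K_p\), which yields equicontinuity of \(\mathcal K \varphi_n\) and, via the profile equation, enough equicontinuity of \(\varphi_n\) itself to pass to a subsequential uniform limit at \(c = 0\).
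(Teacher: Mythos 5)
Your overall plan matches the paper's: pointwise strict bounds first, then upgrade to uniform bounds by a compactness-and-contradiction argument, finally use Remark~\ref{remark:uniform Holder} to exclude alternative~(i). The upper bound argument and the exclusion of $\varphi_\ast \equiv -\sqrt{2}$ in the lower bound argument are essentially the paper's. However, your treatment of the case $\varphi_\ast \equiv 0$ has a genuine gap, and you have in fact correctly diagnosed it yourself: the point $(0,0)$ is not a bifurcation point, since $\partial_\varphi F[(0,0)] = \mathcal{K}$ has trivial kernel, and $c=0$ is merely the accumulation point of the bifurcation values $c_k = \sqrt{\tanh(k)/k}$. The isolation argument at the end of the proof of Lemma~\ref{lemma:noperiod} relies on the curve returning to a genuine bifurcation point $(0,c_1)$ where the local structure of solutions is explicitly known via Proposition~\ref{prop:local}; it does not carry over to accumulation at $(0,0)$, and no amount of adaptation of that argument closes the gap.

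What the paper does instead at this point is invoke Lemma~\ref{lemma:lower_blowup improved}. Each $\varphi_n$ satisfies its hypotheses (even, non-constant, nondecreasing on $(-\pi,0)$, $\varphi_n \leq \gamma_n$), so the universal estimate
\[
\gamma_n - \varphi_n(x) \gtrsim \frac{|x\log|x||}{1+c_n}
\]
holds for all $|x|\ll 1$ with implicit constant independent of $n$. Fixing a single small $x_0\neq 0$ and letting $n\to\infty$, the left-hand side tends to $0$ (since $\gamma_n\to 0$ and $\varphi_n\to 0$ uniformly) while the right-hand side tends to a positive constant. This is the immediate contradiction the paper alludes to, and it is the missing idea in your argument. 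Your observation that Lemma~\ref{lemma:arzela} as stated requires $c_n\eqsim 1$, whereas here $c_n\searrow 0$, is astute; the workaround you sketch (uniform $L^\infty$ bound from Lemma~\ref{lemma:uniform_c-bound}, plus equicontinuity of $\mathcal{K}\varphi_n$ inherited from $K_p\in L^1(\mathbb{S})$) is in the right direction, though to transfer equicontinuity back to $\varphi_n$ itself you should argue via the cubic: $N(\varphi_n;c_n)=\mathcal{K}\varphi_n$ converges uniformly, $N(\cdot\,;c_n)\to\frac{1}{2}(\cdot)^3$ uniformly on bounded sets, and $y\mapsto(2y)^{1/3}$ is a homeomorphism of $\mathbb{R}$, so $\varphi_n$ converges uniformly. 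Note that inverting $N(\cdot\,;c_n)$ directly, as in the Nemytskii argument of Lemma~\ref{lemma:uniform_c-bound}, is delicate here because $N^\prime(\gamma_n)=0$ and the solutions reach up to $\gamma_n$.
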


\begin{proof}
From the proof of Lemma \ref{lemma:noperiod}, we find that $c(\tau )<1$ uniformly for all $\tau >0$.  Indeed,
Proposition \ref{prop:local} implies $c(\tau )<1$ for $\tau >0$ sufficiently small, and Lemma \ref{lemma:uniqueness}
implies that the only {way} that $c(\tau )$ can approach $c=1$ along $\mathcal{R}$ is for the solutions
${\varphi}(\tau )$ to approach the trivial solution ${\varphi}=0$.  However, Proposition \ref{prop:local} implies
that the unique solutions in a neighborhood of $({\varphi},c)=(0,1)$ are the constant solutions.  
Since the proof of Lemma \ref{lemma:noperiod} shows that the main bifurcation curve $\mathcal{R}$ does
not connect to the two lines of constant solutions, it follows that $c(\tau )$ is uniformly bounded {away from} $c=1$ for all $\tau >0$.  

To verify that $c(\tau )\gtrsim 1$ uniformly,
suppose, on the contrary, that there exists a sequence 
$(\varphi_n,c_n) = (\varphi(\tau _n),c(\tau _n)) \in S\cap(\Lambda\times\mathbb{R})$
such that $c_n\searrow 0$ as $n\to\infty$.  By Lemma~\ref{lemma:arzela}, we know that $\varphi_n\to\varphi_0$
in $C(\mathbb{S})$ for some solution $\varphi_0 \leq \gamma$, non-decreasing on \((-\pi,0)\). Since $c=0$, \(\varphi_0\) must be constant by Lemma~\ref{lemma:uniqueness}. There are now two cases. If $\varphi_0 =  0$ one immediately gets a contradiction from 
Lemma \ref{lemma:lower_blowup improved}.
If, on the other hand,  \(\varphi_0 = - \sqrt{2}\), the solution curve must, by continuity, have passed a point \((c_*,\varphi_*)\) at which \(c_* > 0\) and \(\max_x \varphi_*(x) = 0\).  (The asymptotic formula for  \(\varphi(\tau )\) in Proposition~\ref{prop:local} guarantees that \(\max_x \varphi(\tau ) > 0\) for small \(\tau  > 0\).) According to Remark~\ref{rem:sign-changing},  \(\varphi_* \equiv 0\), which contradicts the fact that \(\varphi(\tau )\) is strictly monotone on a half-period by Lemma~\ref{lemma:noperiod}, for any \(\tau  \in (0,\infty)\).

Finally, since $c(\tau ) \eqsim 1$ it follows from Theorem~\ref{thm:reg}(ii) that the quantity  $\|({\varphi}(\tau ),c(\tau ))\|_{C^\alpha(\mathbb{S})\times\mathbb{R}}$  is uniformly bounded
for all $\tau>0$; recall here that \(\alpha \in (\frac{1}{2},1)\) is fixed.  This excludes alternative (i) in Theorem \ref{thm:global}, as claimed.

\end{proof}

{Combining the  results of this section}, we obtain the following:
By Proposition \ref{thm:global}, Lemma~\ref{lemma:noperiod} and Lemma~\ref{lemma:c_uniformbound} 
there exists a sequence  \(\{(\varphi_n,c_n)\}_n = \{\varphi(\tau _n),c(\tau _n)\}_n\) 
that approaches the boundary of \(U\) as \(n \to \infty\), meaning that 
\[
\lim_{n \to \infty}(\gamma_n - \max \varphi_n) = 0, 
\]
with \(\gamma_n = (1- \frac{1}{\sqrt{3}})c_n\). Lemma~\ref{lemma:c_uniformbound} guarantees that \(\inf_n c_n > 0\), and it furthermore follows from Lemma~\ref{lemma:uniqueness} and Lemma~\ref{lemma:noperiod} that \(\sup_n c_n {<} 1\). Hence, Lemma~\ref{lemma:arzela} yields the existence of a convergent subsequence, denoted again by \(\{\varphi_n,c_n\}_n\), converging in \(C(\mathbb{S}) \times \RM\) to a solution pair \((\varphi,c)\). The solution \(\varphi\) is non-decreasing on \([-\pi,0]\) and satisfies \(\varphi(0) = \gamma\), with \(\gamma = \lim_{n\to \infty} \gamma_n\). 
Theorem \ref{thm:reg} now immediately yields the following result.

{
\begin{theorem}\label{thm:main}
In Theorem \ref{thm:global}, only alternative (ii) occurs. Given any sequence of positive numbers $\tau _n$ 
with $\tau _n\nearrow\infty$, there exists a limiting wave ${\varphi}$ obtained as the uniform limit
of a subsequence $\{\varphi(\tau _{n_k})\}_k$.  The limiting wave is a solution of \eqref{profile}
with $c=\lim_{k\to\infty}c(\tau _{n_k})$ and is even, $2\pi$-periodic, and satisfies 
${\varphi}(0)=\left(1-{\textstyle\frac{1}{\sqrt{3}}}\right)c=:\gamma$.  
Further, it is strictly increasing on $(-\pi,0)$,
smooth on $\mathbb{R}\setminus2\pi\mathbb{Z}$, and satisfies
\[
\gamma-  \varphi(x)\simeq \left|x\log|x|\right|,
\]
for all $|x|\ll 1$ sufficiently small.
\end{theorem}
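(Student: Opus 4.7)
My plan is to consolidate the global bifurcation framework of Theorem \ref{thm:global} with the a priori properties developed in Section \ref{sec:a priori}, so that the main theorem becomes essentially a harvest of the previously established results.

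First I would dispose of the three alternatives in Theorem \ref{thm:global}. Alternative (iii) has already been ruled out in Lemma \ref{lemma:noperiod}, while alternative (i) is excluded by Lemma \ref{lemma:c_uniformbound}: the wavespeed stays in a compact subinterval of $(0,1)$, and combined with the uniform H\"older estimate of Remark \ref{remark:uniform Holder}, the $C^\alpha(\mathbb{S}) \times \mathbb{R}$ norm along $\mathcal{R}$ cannot blow up. Hence only alternative (ii) remains; since $U$ is cut out by the strict inequality $\varphi < \gamma$, this forces the existence of a sequence $\varepsilon_n \nearrow \infty$ along which $\gamma(c(\varepsilon_n)) - \max_x \varphi(\varepsilon_n)(x) \to 0$.

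Next, given any $\varepsilon_n \nearrow \infty$, I would extract a subsequence so that $c(\varepsilon_{n_k}) \to c \in [c_{\min}, c_{\max}] \subset (0,1)$, and then invoke Lemma \ref{lemma:arzela} to pass to a further subsequence converging uniformly on $\mathbb{S}$ to a continuous solution $\varphi$ of \eqref{profile} at this wavespeed. The properties of being even, $2\pi$-periodic, and non-decreasing on $(-\pi,0)$ are preserved under uniform convergence, so these are inherited by $\varphi$ from Theorem \ref{thm:nodal} together with Lemma \ref{lemma:noperiod}. Since each $\varphi_{n_k}$ attains its maximum at $x=0$, the pointwise convergence at that point, combined with alternative (ii), yields $\varphi(0) = \gamma = (1 - 1/\sqrt{3})c$.

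Finally, I would appeal to Theorem \ref{thm:reg} to upgrade the qualitative properties of the limiting wave: part (i) delivers smoothness on $\mathbb{R} \setminus 2\pi \mathbb{Z}$ and strict monotonicity on $(-\pi, 0)$, while part (iii) supplies the precise asymptotic $\gamma - \varphi(x) \eqsim |x \log|x||$ near the crest. At this stage no new estimates are required; the hardest ingredients, namely the bootstrapped H\"older regularity, the logarithmic-singularity analysis that bounds $\gamma - \varphi(x)$ from above, and the trivial-solution classification at $c \in \{0,1\}$, have already been handled in Theorem \ref{thm:reg} and Lemma \ref{lemma:uniqueness}. The only conceptual obstacle I anticipate is ensuring that the subsequential limit is a genuine highest wave rather than a degenerate constant solution; this is precisely prevented by the lower bound $\inf_n c_n > 0$ of Lemma \ref{lemma:c_uniformbound} combined with the nodal argument of Lemma \ref{lemma:noperiod}, so that $\varphi_n \to \varphi$ cannot collapse onto the constants $0$ or $-\sqrt{2}$ identified in Lemma \ref{lemma:uniqueness}.
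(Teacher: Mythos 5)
Your proposal reproduces the paper's own argument step for step: alternatives (i) and (iii) are dismissed via Lemma~\ref{lemma:c_uniformbound} (together with Remark~\ref{remark:uniform Holder}) and Lemma~\ref{lemma:noperiod}, a uniformly convergent subsequence is extracted via Lemma~\ref{lemma:arzela} using the compactness of the wavespeed in $(0,1)$, the limit inherits evenness and monotonicity with $\varphi(0)=\gamma$ from alternative~(ii), and the regularity and $|x\log|x||$ asymptotics then follow from Theorem~\ref{thm:reg}. The only point left slightly implicit in your write-up is how alternative (ii) forces $\gamma(c(\varepsilon_{n_k}))-\max\varphi(\varepsilon_{n_k})\to 0$ along an \emph{arbitrary} sequence $\varepsilon_n\nearrow\infty$ rather than just along some sequence, but the paper's proof of this theorem leaves the same step at the same level of detail.
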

}

\begin{remark}\label{rem_positivehighest}
As mentioned in Remark \ref{signdefinite}, the cusped traveling wave solution $\varphi$ in Theorem \ref{thm:main} is necessarily sign-changing.  The existence of 
a highest cusped wave that is sign-definite was suggested numerically to exist in the recent work \cite{CJ17}.  We note that the theory presented in the previous sections may be used to prove the existence of a positive, logarithmically-cusped, highest wave, occurring at the end of the global bifurcating branch continuing from the
trivial solution $\Gamma_-$ with $c>1$ and terminating at $\max(\varphi)=\gamma$.  
This extension is outlined in Appendix \ref{appendix_b} below.
\end{remark}

\appendix

\section{Numerical Schemes}\label{Appendix}

In this appendix, we turn to the numerical approximation of solutions of \eqref{profile}.
To numerically approximate the even, $2\pi$-periodic solutions of \eqref{profile}, we employ a cosine-collocation method as discussed in \cite[Section 5]{EK11} in conjunction with the pseudo-arclength continuation method to achieve a curve of solutions bifurcating from the solution of trivial amplitude.  Here, we discuss some of the details of these methods.

\subsection{Cosine Collocation Method}

Solutions $\varphi$ of \eqref{profile} that are even $2\pi$-periodic may be naturally expanded 
in a Fourier cosine basis as
\begin{equation}  \label{E:fourier_series}
\varphi(x) = \sum_{n=0}^\infty \widehat{\varphi}(n)\cos(nx),
\end{equation}
where
\begin{equation}  \label{E:fourier_coefficients}
\widehat{\varphi}(n) = \begin{cases}
\displaystyle \frac{1}{2\pi} \int_{-\pi}^\pi \varphi(x) \, dx & \text{if $n = 0$} \\
\displaystyle \frac{1}{\pi} \int_{-\pi}^\pi \varphi(x)\cos(nx) \, dx & \text{if $n \geq 1$}.
\end{cases}
\end{equation}
To approximate $\varphi$, we first truncate the Fourier series \eqref{E:fourier_series} to $N\in\NM$ terms:
\[
\varphi_N(x) := \sum_{n=0}^{N-1} \widehat{\varphi}(n) \cos(n x).
\]
Recalling that $\varphi$ is to be even, for each $n=0,1,\ldots N-1$ the Fourier coefficients 
$\widehat\varphi(n)$ may be approximated by discretizing $[0,\pi]$ into $N+1$ subintervals
\[
0 < x_1 < x_2 < \cdots < x_{N-1} < x_N < \pi,
\]
where $x_m = \dfrac{(2m-1)\pi}{2N}$ for $m=1,2,\ldots,N$ are the so-called \emph{collocation points},
and applying midpoint quadrature.  This gives the approximation
\begin{equation}  \label{E:fourier_coefficients_approx}
\widehat{\varphi}(n) \approx \widehat{\varphi}_N(n) := w(n) \sum_{m=1}^N \varphi_N(x_m)\cos(n x_m),
\end{equation}
with
\[
w(n) = \begin{cases}
1/N & \text{if $n=0$} \\
2/N & \text{if $n=1,2,\ldots,N-1$}
\end{cases}
\]
yielding the discrete cosine representation of $\varphi_N(x)$:
\begin{align*}
\varphi_N(x) &= \sum_{m=1}^N \left( \sum_{n=0}^{N-1} w(n)\cos(n x_m)\cos(n x) \right) \varphi_N(x_m).
\end{align*}
Moreover, the action of $\mathcal{K}$ on $\varphi$ may be approximated via
\begin{align*}
\mathcal{K}\varphi(x) \approx \mathcal{K}_N \varphi_N(x) &:= \sum_{n=0}^{N-1} \widehat{\mathcal{K}\varphi_N}(n)\cos(n x) \\
&= \sum_{m=1}^N \left( \sum_{n=0}^{N-1} \frac{\tanh(n)}{n} \, w(n)\cos(n x_m)\cos(n x) \right) \varphi_N(x_m).
\end{align*}

Enforcing the profile equation \eqref{eq:profile'} at each of the collocation points $x_i$ for $i=1,\ldots,N$ and replacing $\varphi$ and $\mathcal{K}\varphi$ with their respective approximations $\varphi_N$ and $\mathcal{K}_N \varphi_N$, we have
\begin{equation*}
\varphi_N(x_i) \left( c-\frac{1}{2}\varphi_N(x_i) \right) \left( c-\varphi_N(x_i) \right) - \mathcal{K}_N \varphi_N(x_i) = 0,
\end{equation*}
a nonlinear system of $N$ equations in the $N+1$ unknowns $c$, $\varphi_N(x_1)$, $\varphi_N(x_2), \ldots, \varphi_N(x_N)$.  For convenience, let
\[
\varphi_N^i := \varphi_N(x_i), \quad \text{and} \quad \mathcal{K}_N \varphi_N^i := \mathcal{K}_N \varphi_N(x_i),
\]
and for $y := (c,\varphi_N^1,\varphi_N^2,\ldots,\varphi_N^N) \in \RM^{N+1}$, define $f(y) : \RM^{N+1} \to \RM^N$ by
\begin{equation}  \label{E:pseudoarclength_f}
f(y) := (f_1(y), \ldots, f_N(y)), \quad f_i(y) := \varphi_N^i \left( c-\frac{1}{2}\varphi_N^i \right) \left( c-\varphi_N^i \right) - \mathcal{K}_N \varphi_N^i.
\end{equation}
Using the local bifurcation formulas in Proposition \ref{prop:local} and a numerical continuation algorithm, we will solve the nonlinear equation
\[
f(y) = 0
\]
to obtain wavespeeds $c$ and points $\varphi_N^i = \varphi_N(x_i)$ on the corresponding approximate profile wave.

\subsection{Continuation by the Pseudo-Arclength Method}

For a given $f : \RM^{N+1} \to \RM^N$, consider the problem of finding points on the curve defined by $f(y) = 0$.  Given a point $y_0 \in \RM^{N+1}$ such that $f(y_0) = 0$ and an initial unit tangent direction $z_0 \in \RM^{N+1}$, one can find another point $y_1$ such that $f(y_1) = 0$ via a predictor-corrector method known as the \emph{pseudo-arclength} method.  This method is outlined in the following three steps:
\\

\noindent 
(1) For a \emph{step size} $h$, extrapolate from $y_0$ along the tangent direction $z_0$ to form the predictor $y_0^p := y_0 + hz_0$.
\\

\noindent
(2) From $y_0^p$, correct the extrapolation by projecting onto the curve $f(y) = 0$ in the direction orthogonal to $z_0$.  That is, solve for $y_1 \in \RM^{N+1}$ in the following nonlinear system of $N+1$ equations in $N+1$ unknowns:
\[
\left\{
\begin{aligned}
&f(y_1) = 0 \\
&z_0 \cdot (y_1 - y_0^p) = 0.
\end{aligned}
\right.
\]
This step may be accomplished, for example, by Newton's method.
\\

\noindent 
(3) Obtain a suitable tangent direction $z_1$ at $y_1$ by solving for $z_1$ in the following system of $N+1$ equations and $N+1$ unknowns:
\begin{empheq}[left=\empheqlbrace]{align}
& Df(y_1)z_1 = 0  \label{E:tangency_condition} \\
& z_0 \cdot z_1 = 1.  \label{E:orientation_condition}
\end{empheq}
Here, the first equation ensures $z_1$ is tangent to $f(y)=0$ at $y_1$, while the second equation guarantees
the angle between $z_0$ and $z_1$ is acute, ensuring a consistent orientation of the tangent vectors. Note that resulting $z_1$ above will not be of unit length and should be normalized before iterating the method.

\

The above algorithm can be iterated to continue from a point $y_k$ on the curve to another point $y_{k+1}$ such that $f(y_{k+1})=0$.  See Figure \ref{F:pseudoarclength} for a graphical illustration of the method.  

\begin{figure}[t] 
\centering
\includegraphics[scale=0.4,clip=true,trim=2cm 2cm 1.5cm 2cm]{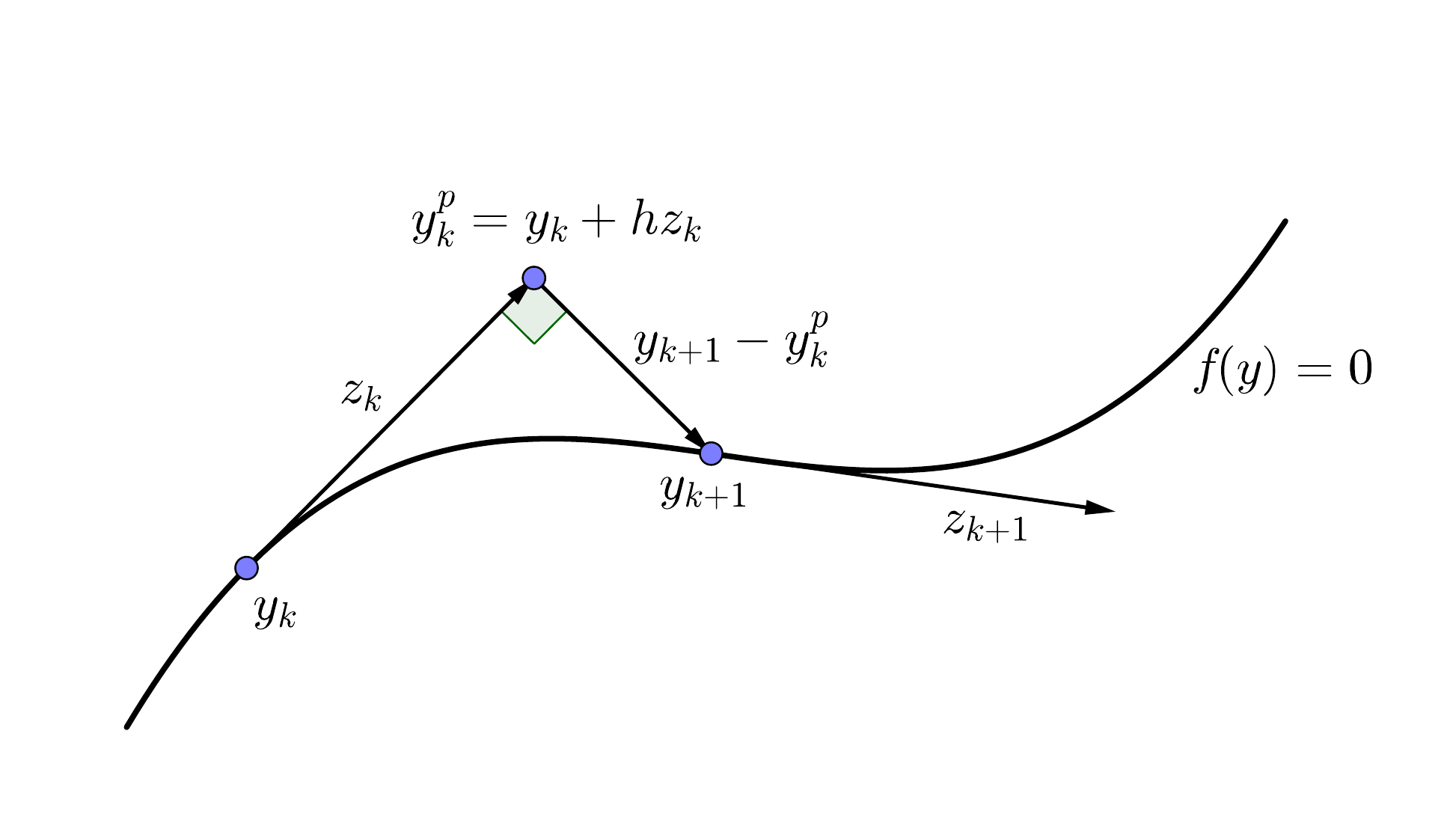}
\caption{ \small
Illustration of the pseudo-arclength method: given $y_k$ such that $f(y_k)=0$, the method computes a point $y_{k+1}$ such that $f(y_{k+1})=0$ and a consistently-oriented tangent direction $z_{k+1}$ at $y_{k+1}$.
}\label{F:pseudoarclength}
\end{figure}

To apply the pseudo-arclength method to our problem, we fix $\tau _0 > 0$ small and use the local bifurcation formulas for $c(\tau )$ and $\varphi(x;\tau )$ provided in Proposition \ref{prop:local} to form
\[
y_0^* := (c(\tau _0), \vec{\varphi}_N(\tau _0)) \in \RM^{N+1},
\]
where $\vec{\varphi}_N = (\varphi_N^1,\ldots,\varphi_N^N) := (\varphi(x_1;\tau _0), \ldots, \varphi(x_N;\tau _0))$ and the $x_i = \dfrac{(2i-1)\pi}{2N}$, $i=1,\ldots,N$ are the collocation points on $[0,\pi]$.  Moreover, since the local bifurcation curve is parametrized by $\tau $, we compute the tangent direction to the local bifurcation curve at $y_0^*$ by differentiating with respect to $\tau $ at $\tau _0$ and normalizing to unit length:
\[
\widetilde{z}_0 := \left( c'(\tau _0), \, \frac{\partial \vec{\varphi}_N}{\partial \tau }(\tau _0) \right), \quad z_0 := \frac{\widetilde{z}_0}{|\widetilde{z}_0|},
\]
where
\begin{align*}
c'(\tau _0) &= \frac{3\tau _0}{4}\left[ -\frac{1}{c_k} + 3c_k \left( \frac{1}{c_k^2-1} + \frac{1}{2(c_k^2 - c_{2k}^2)} \right) \right] \\
\frac{\partial \vec{\varphi}_N}{\partial \tau }(\tau_0) &= ( \cos(x_1), \, \cos(x_2), \ldots, \cos(x_N) ).
\end{align*}
Now, $y_0^*$ does not necessarily satisfy  $f(y_0^*)=0$, hence we initially solve $f(y)=0$ via Newton's method
using $y_0^*$ as an initial guess to obtain $y_0$ such that $f(y_0)=0$.  For small $\tau_0$, this $y_0$ will be close to $y_0^*$ with the tangent direction at $y_0^*$ being a sufficiently adequate approximation of the tangent direction at $y_0$.  Thus we will use these $y_0$ and $z_0$ to seed the pseudo-arclength method, with $f : \RM^{N+1} \to \RM^N$ given by \eqref{E:pseudoarclength_f}.

The bifurcation diagram in Figure \ref{fig:bifcurve1} was generated taking $N=512$.  Note the monotonicity
of the wave profile, guaranteed by Lemma \ref{lemma:lower_blowup improved}, allows us to approximate
the waveheight as
\[
\text{waveheight} = \varphi(0)-\varphi(\pi) \approx \varphi_N(x_1) - \varphi_N(x_N),
\]
which is plotted against the corresponding wavespeed in Figure \ref{fig:bifcurve1}.  
We continued to run the pseudo-arclength method so long as $\phi_N(x_1)\approx\phi(0)$, defined above,
does not exceed the value $\gamma$.

Though it requires more computation than a more-traditional parameter continuation method, the pseudo-arclength method excels when generating bifurcation diagrams containing a turning point, as occurs near the top of the curve plotted in Figure \ref{fig:bifcurve1}.  In parameter continuation, one continues the collocation method by manually stepping the values of the parameter, which is problematic near turning points where the parametrization fails to be a function of the wavespeed parameter.  This difficulty can sometimes be side-stepped by switching parametrizations near the turning point.  However, the pseudo-arclength method is more robust in the sense that one does not have to manually decide how to step the parameter; the method computes the parameter and collocation values simultaneously.

Ideally, for solutions computed by the above continuation algorithm, one would run a time-evolution to ensure that the approximated profiles persist over multiples of the temporal period.  
However, we believe our time-evolution analysis failed due to the expected ill-posedness of the local dynamics about these sign-changing waves.
See Remark \ref{signdefinite} and \cite{CJ17} for further discussion.

\section{Extension to waves with sign-definite height profiles}\label{appendix_b}

In this section we describe the extensions necessary to prove the existence of a wave $\varphi$ that is strictly positive and else satisfies the conditions of Theorem~\ref{thm:main}.
As mentioned in Remark~\ref{rem_positivehighest}, it was shown in \cite[Section 3.1.3]{CJ17} that a one-parameter family of strictly positive, $2\pi$-periodic traveling waves $\varphi$ of 
the nonlocal profile equation \eqref{profile} bifurcates from the curve $\Gamma_-$ of trivial solutions at some $c_1 \approx 1.11834$.  This local bifurcation curve can be 
continued into a global curve as in Theorem~\ref{thm:global} by using precisely the same argument as in Section~\ref{sec:bifurcation} above. These waves satisfy the same nodal and regularity properties as the ones for \(c \in (0,1)\), so that they are smooth wherever \(\varphi < \gamma\), even and strictly rising on the half-period \((-\pi,0)\). 
According to Lemma~\ref{lemma:uniqueness} this global bifurcation curve cannot pass $c=1$ without crossing the curve of zero solutions. As the following Lemma shows this implies strict positivity of the solutions along it.  

\begin{lemma}\label{lemma:positivewave}
Solutions along the global bifurcation branch for \(c > 1\) are strictly positive .
\end{lemma}
\begin{proof}
As \(\Gamma_{-}(c) > 0\) for \(c > 1\), the solutions \(\varphi\) bifurcating off \(\Gamma_{-}\) at \(c_1 > 1\) are strictly positive for sufficiently small values of the bifurcation parameter \(\tau\). Define \(\tau_0\) to be the smallest positive value of \(\tau\) for which \(\min \varphi(\tau) = \varphi(\tau)|_{x = \pi} = 0\) (if it exists; if not, all solutions along the global bifurcation branch are positive). Evaluating the steady equation~\eqref{profile} at a point where \(\varphi = 0\) shows that \(\mathcal{K} \varphi = 0\) at that point. Because \(\varphi(\tau_0) \geq 0\) this implies that \(\varphi(\tau_0)\) must vanish identically. But that is only possible if the global bifurcation branch has returned to the line \(\{(0,c) \colon c \in \mathbb{R}\}\) of zero solutions, and indeed for \(c > 1\). That, in turn,  would contradict the uniqueness statement proved in Proposition~\ref{prop:local}: there are no bifurcation points \((0,c)\) for \(c > 1\). Hence, there does not exist a finite \(\tau_0\) as above, and all solutions \(\varphi(\tau)\) along the global bifurcation branch starting from \(\Gamma_{-}\) at \(c_1 > 0\) are strictly positive.
\end{proof}

To classify the limiting behavior of solutions at the end of the global bifurcation curve of strictly positive solutions one again proves that alternatives (i) and (iii) in Theorem~\ref{thm:global} are excluded.  Observe first that the curve $\max(\varphi)=\Gamma_-$ intersects the curve $\max(\varphi)=\gamma$ at some $c_\gamma > c_1$ satisfying $\Gamma_-(c_\gamma)=\gamma(c_\gamma)$.   The next
result shows the bifurcation curve cannot cross that value.

\begin{lemma}\label{lemma:uniqueness2}
When $c=c_\gamma$ the unique solution of~\eqref{profile} satisfying $\varphi\leq\gamma$ with $\varphi(0)=\gamma$ is the constant solution $\varphi=\gamma$.
\end{lemma}
\begin{proof}
At $c=c_\gamma$ the constant $\varphi=\gamma$ is a solution of the profile equation \eqref{profile}.  In particular, $N(\gamma)=\gamma$.
Consequently, if $\varphi(0)=\gamma$ it follows from the positivity and normalization of the kernel \(K\) that 
\[
N(\gamma) = N \circ \varphi(0) = \mathcal{K}\varphi (0) \leq  \varphi(0) = \gamma,
\]
where equality is only possible if \(\varphi\) is constant. Hence,  $\varphi=\gamma$ as claimed.
\end{proof}

Alternative (iii) in Theorem~\ref{thm:global} can then be excluded by noting that Lemma~\ref{lemma:noperiod} applies directly
to the present case, the only modification being that now $c(\bar\tau)\in(1,c_\gamma)$.  Combining Theorem~\ref{thm:nodal} and Lemma~\ref{lemma:noperiod} it follows
that all solutions $\varphi(\tau)$ along the global bifurcation curve starting from $\Gamma_-$ at $c=c_1$ are nontrivial, positive, smooth, even, and strictly increasing on $(-\pi,0)$.
Finally, Lemmas~\ref{lemma:arzela} and~\ref{lemma:c_uniformbound} can be directly adapted to the present case, consequently excluding alternative (i) from Theorem~\ref{thm:global}. Thus, only alternative (ii) can hold for the bifurcation branch under consideration. As in Section~\ref{sec:bifurcation} above this implies
the existence of a highest, $2\pi$-periodic traveling wave solution of \eqref{profile} that is strictly increasing on $(-\pi,0)$, smooth on $\RM\setminus 2\pi\ZM$, and satisfies
\[
\gamma-\varphi(x)\simeq\left|x\log|x|\right|,
\]
for all $|x|\ll 1$ sufficiently small. This highest wave is everywhere positive.


\vspace{1em}

{
\noindent
{\bf Acknowledgements.} 
The authors would like to thank Sandra Pott and Atanas Stefanov for useful conversations, in particular regarding appropriate function spaces for solutions of the equation~\eqref{profile}.
We are also indebted to two of the referees for their proof-reading and valuable comments that helped improve our paper.

{\small

\bibliographystyle{siam}

\begin{thebibliography}{10}

\bibitem{MR2991247}
{\sc P.~Aceves-S\'anchez, A.~A. Minzoni, and P.~Panayotaros}, {\em Numerical
  study of a nonlocal model for water-waves with variable depth}, Wave Motion,
  50 (2013), pp.~80--93.

\bibitem{AFT82}
{\sc C.~Amick, L.~Fraenkel, and J.~Toland}, {\em On the {S}tokes conjecture for
  the wave of extreme form}, Acta Math., 148 (1982), pp.~193--214.

\bibitem{Arn16}
{\sc A.~Arnesen}, {\em Non-uniform dependence on initial data for equations of
  {W}hitham type},  (2016).
\newblock arxiv:1602.00250.

\bibitem{Bhatia}
{\sc R.~Bhatia}, {\em Positive definite matrices}, Princeton University Press,
  2007.

\bibitem{MR2651916}
{\sc O.~Blasco and S.~Pott}, {\em Operator-valued dyadic {BMO} spaces}, J.
  Operator Theory, 63 (2010), pp.~333--347.

\bibitem{MR3603270}
{\sc G.~Bruell, M.~Ehrnstr\"om, and L.~Pei}, {\em Symmetry and decay of
  traveling wave solutions to the {W}hitham equation}, J. Differential
  Equations, 262 (2017), pp.~4232--4254.

\bibitem{BTbook}
{\sc B.~Buffoni and J.~F. Toland}, {\em Analytic theory of global bifurcation.
  An introduction.}, Princeton University Press, 2003.

\bibitem{Carter2017}
{\sc J.~D. Carter}, {\em Bidirectional {W}hitham equations as models of waves
  on shallow water}.
\newblock arXiv:1705.06503, 2017.

\bibitem{CJ17}
{\sc K.~M. {Claassen} and M.~A. {Johnson}}, {\em Numerical bifurcation and
  spectral stability of wavetrains in bidirectional whitham models},
  arXiv:1710.09950,  (2017).

\bibitem{CE98}
{\sc A.~Constantin and J.~Escher}, {\em Wave breaking for nonlinear nonlocal
  shallow water equations}, Acta Math., 181 (1998), pp.~229--243.

\bibitem{MR3640555}
{\sc B.~Deconinck and O.~Trichtchenko}, {\em High-frequency instabilities of
  small-amplitude solutions of {H}amiltonian {PDE}s}, Discrete Contin. Dyn.
  Syst., 37 (2017), pp.~1323--1358.

\bibitem{EEP15}
{\sc M.~Ehrnstr{\"o}m, J.~Escher, and L.~Pei}, {\em A note on the local
  well-posedness for the {W}hitham equation}, in Elliptic and parabolic
  equations, vol.~119 of Springer Proc. Math. Stat., Springer, Cham, 2015,
  pp.~63--75.

\bibitem{EGW12}
{\sc M.~Ehrnstr\"om, M.~Groves, and E.~Wahl\'en}, {\em On the existence and
  stability of solitary-wave solutions to a class of evolution equations of
  whitham type}, Nonlinearity, 25 (2012), pp.~1--34.

\bibitem{EK09}
{\sc M.~Ehrnstr{\"o}m and H.~Kalisch}, {\em Traveling waves for the {W}hitham
  equation}, Differential Integral Equations, 22 (2009), pp.~1193--1210.

\bibitem{EK11}
{\sc M.~Ehrnstr{\"o}m and H.~Kalisch}, {\em Global bifurcation for the
  {W}hitham equation}, Math. Model. Nat. Phenom., 7 (2013).

\bibitem{EPW17}
{\sc M.~Ehrnstr\"om, L.~Pei, and Y.~Wang}, {\em A conditional well-posedness
  result for the bidirectional whitham equation}, arXiv:1708.04551,  (2017).

\bibitem{EWhighest}
{\sc M.~Ehrnstr{\"o}m and E.~Wahl\'en}, {\em On {W}hitham's conjecture of a
  highest cusped wave for a nonlocal dispersive shallow water wave equation}.
\newblock arXiv:1602.05384, 2015.

\bibitem{HJ15_2}
{\sc V.~Hur and M.~Johnson}, {\em Modulational instability in the {W}hitham
  equation with surface tension and vorticity}, Nonlinear Analysis: Theory,
  Methods \& Applications, 129 (2015), pp.~104--118.

\bibitem{HJ15}
\leavevmode\vrule height 2pt depth -1.6pt width 23pt, {\em Modulational
  instabiliy in the {W}hitham equation for water waves}, Stud. Appl. Math., 134
  (2015).

\bibitem{MR3682673}
{\sc V.~M. Hur}, {\em Wave breaking in the {W}hitham equation}, Adv. Math., 317
  (2017), pp.~410--437.

\bibitem{J13}
{\sc M.~Johnson}, {\em Stability of small periodic waves in fractional kdv type
  equations}, SIAM Journal on Mathematical Analysis, 45 (2013), pp.~2597--3228.

\bibitem{MR2004250}
{\sc H.~Kielh{\"o}fer}, {\em Bifurcation theory}, vol.~156 of Applied
  Mathematical Sciences, Springer-Verlag, New York, 2004.

\bibitem{klein2017whitham}
{\sc C.~Klein, F.~Linares, D.~Pilod, and J.-C. Saut}, {\em On {Whitham} and
  related equations}, Studies in Appl. Math., 140 (2018), pp.~133--177.

\bibitem{LannesBook}
{\sc D.~Lannes}, {\em The {W}ater {W}aves {P}roblem. {M}athematical {A}nalysis
  and {A}symptotics}, American Mathematical Society, 2013.

\bibitem{RS86}
{\sc R.~MacKay and P.~Saffman}, {\em Stability of water waves}, Proc. Roy. Soc.
  London Ser. A, 406 (1986), pp.~115--125.

\bibitem{MKD2015}
{\sc D.~Moldabayev, H.~Kalisch, and D.~Dutykh}, {\em The {W}hitham equation as
  a model for surface water waves}, Phys. D, 309 (2015), pp.~99--107.

\bibitem{NW18}
{\sc D.~{Nilsson} and Y.~Wang}, In preparation.

\bibitem{Oberhettinger}
{\sc F.~Oberhettinger}, {\em Tables of {F}ourier {T}ransforms and {F}ourier
  {T}ransforms of {D}istributions}, Springer-Verlag, 1990.

\bibitem{P17}
{\sc A.~Pandey}, {\em Comparison of modulational instabilities in
  full-dispersion shallow water models}, preprint, arxiv:1708.00547,  (2017).

\bibitem{SKCK14}
{\sc N.~Sanford, K.~Kodama, J.~Carter, and H.~Kalisch}, {\em Stability of
  traveling wave solutions to the {W}hitham equation}, Phys. Lett. A, 378
  (2014), pp.~2100--2107.

\bibitem{MR3668593}
{\sc J.-C. Saut, C.~Wang, and L.~Xu}, {\em The {C}auchy problem on large time
  for surface-waves-type {B}oussinesq systems {II}}, SIAM J. Math. Anal., 49
  (2017), pp.~2321--2386.

\bibitem{MR3026551}
{\sc J.-C. Saut and L.~Xu}, {\em Well-posedness on large time for a modified
  full dispersion system of surface waves}, J. Math. Phys., 53 (2012),
  pp.~115606, 23.

\bibitem{Taylor}
{\sc M.~E. Taylor}, {\em Partial {D}ifferential {E}quations {III}. {N}onlinear
  {E}quations}, Springer, second~ed., 2011.

\bibitem{MR3523508}
{\sc S.~Trillo, M.~Klein, G.~F. Clauss, and M.~Onorato}, {\em Observation of
  dispersive shock waves developing from initial depressions in shallow water},
  Phys. D, 333 (2016), pp.~276--284.

\bibitem{Whitham_book}
{\sc G.~B. Whitham}, {\em Linear and {N}onlinear {W}aves}, John Wiley \& Sons,
  1999.

\end{thebibliography}

}

\end{document}